\newtheorem{Theorem}{Theorem}[section]
\newtheorem{Proposition}[Theorem]{Proposition}
\newtheorem{Lemma}[Theorem]{Lemma}
\newtheorem{Corollary}[Theorem]{Corollary}
\theoremstyle{Definition}
\newtheorem{Definition}[Theorem]{Definition}
\newtheorem{Example}[Theorem]{Example}
\newtheorem{Remark}[Theorem]{Remark}
\newcommand{\bTheorem}[1]{
\begin{Theorem} \label{T#1} }
\newcommand{\eT}{\end{Theorem}}
\newcommand{\bProposition}[1]{
\begin{Proposition} \label{P#1}}
\newcommand{\eP}{\end{Proposition}}
\newcommand{\bLemma}[1]{
\begin{Lemma} \label{L#1} }
\newcommand{\eL}{\end{Lemma}}
\newcommand{\bCorollary}[1]{
\begin{Corollary} \label{C#1} }
\newcommand{\eC}{\end{Corollary}}
\newcommand{\bRemark}[1]{
\begin{Remark} \label{R#1} }
\newcommand{\eR}{\end{Remark}}
\newcommand{\bDefinition}[1]{
\begin{Definition} \label{D#1} }
\newcommand{\eD}{\end{Definition}}
\newcommand{\bFormula}[1]{
	\begin{equation} \label{#1}}
\newcommand{\eF}{\end{equation}}
\newcommand{\vc}[1]{{\bm #1}}
\newcommand{\Div}{{\rm div}_{ x}}
\newcommand{\vr}{\varrho}
\newcommand{\projection}[1]{\Pi_h[ #1 ]}
\newcommand{\abs}[1]{\left| #1\right|}
\newcommand{\dt}{\,{\rm d} t }
\newcommand{\dx}{\,{\rm d} { x}}
\newcommand{\dthe}{\,{\rm d} {\theta}}
\newcommand{\dS}{\,{\rm d} {S}}
\newcommand{\dSt}{\,{\rm d} {\tilde {t}}}
\newcommand{\jump}[1]{[\! [ #1 ] \! ]}%{\left[ \left[ #1 \right] \right]}
\newcommand{\avs}[1]{\left\{\!\!\left\{ #1\right\}\!\!\right\}}
\newcommand{\dxi}{\,\mathrm{d}\xi}
\newcommand{\dsx}{\mathrm{d}S_x}
\newcommand{\dtau}{\mathrm{d}\tau}
\newcommand{\TS}{\Delta t}
\newcommand{\vF}{\vc{F}}
\newcommand{\aleq}{\lesssim}%{\stackrel{<}{\sim}}
\newcommand{\vu}{\vc{u}}
\newcommand{\vm}{\vc{m}}
\newcommand{\vn}{\vc{n}}
\newcommand{\vU}{\vc{U}}
\newcommand{\vcr}{\vc{r}}
\newcommand{\vl}{\vc{l}}
\newcommand{\R}{\mathbb{R}}
\newcommand{\I}{\mathbb{I}}
\newcommand{\br}{ \nonumber \\ }
\def\softd{{\leavevmode\setbox1=\hbox{d}%
          \hbox to 1.05\wd1{d\kern-0.4ex{\char039}\hss}}}
\definecolor{Cgrey}{rgb}{0.85,0.85,0.85}
\definecolor{Cblue}{rgb}{0.50,0.85,0.85}
\definecolor{Cred}{rgb}{1,0,0}
\definecolor{fancy}{rgb}{0.10,0.85,0.10}
\definecolor{forestgreen}{rgb}{0.13, 0.55, 0.13}
\begin{document}

%%%%%%%%%%%%%%%%%%%%%%%%%%%%%%%%

\title{Convergence analysis for a finite volume evolution Galerkin method for multidimensional hyperbolic systems\footnote{
\emph{Dedicated to Professor LI Jiequan}} }

%\author{M\' aria Luk\' a\v cov\' a -- Medvi\softd ov\' a\thanks{The work of M.L. and Z.T. was supported by the Deutsche Forschungsgemeinschaft (DFG, German Research Foundation) - Project number DFG through the project 525853336 funded within the Focused Programme SPP 2410 ``Hypebrolic Balance Laws:
%Complexity, Scales and Randomness''. M.L. is grateful to the Gutenberg Research College for supporting her research. 
%\newline
%\hspace*{1em} $^\dagger$
%The work of  Y.Y. was supported by National Natural Science Foundation of China, No. 12401527. }
%\and Zhuyan Tang$^{*}$ 
%\and Yuhuan Yuan$^{\dagger}$}

 \author[M\' aria Luk\' a\v cov\' a -- Medvi\softd ov\' a et~al.]{M\' aria Luk\' a\v cov\' a -- Medvi\softd ov\' a\affil{1}\comma\corrauth,
      Zhuyan Tang\affil{1}, and Yuhuan Yuan\affil{2}}
 \address{\affilnum{1}\ Institute of Mathematics,
           Johannes Gutenberg-University Mainz,
          Staudingerweg 9, 55128 Mainz, Germany. \\
           \affilnum{2}\ School of Mathematics,
           Nanjing University of Aeronautics and Astronautics,
           Jiangjun Avenue No. 29, 211106 Nanjing, P. R. China.}
 \emails{{\tt lukacova@uni-mainz.de} (M.~Lukacova), {\tt zhtang@uni-mainz.de} (Z.~Tang),
          {\tt yuhuanyuan@nuaa.edu.cn} (Y.~Yuan) \\
          }

\begin{abstract}
We study the convergence of a finite volume method based on the method of bicharacteristics for multidimensional hyperbolic conservation laws. In particular, we concentrate on the linear wave equation system and nonlinear Euler equations of gas dynamics. We show the stability and the consistency of the numerical approximations. By means of the generalized Lax equivalence principle we prove the convergence of numerical solutions to the strong solution on the lifespan. 
\end{abstract}

\ams{65M08, 65M12, 35L65, 76N10}

\keywords{hyperbolic conservation laws, compressible Euler system,  consistency formulation,  convergence, dissipative solutions, strong solutions}

\maketitle
%\tableofcontents

\section{Introduction}\label{Introduction}
Hyperbolic conservation laws are fundamental to model conservation principles arising in fluid dynamics, physics or biology. In this paper we consider mutidimensional hyperbolic conservation laws and concentrate on the linear wave equation system and the nonlinear Euler equations of gas dynamics.

These systems are approximated by a genuinely mutidimensional finite volume evolution Galerkin method which is based on the method of bicharacteristics, that has been developed by Luk\' a\v cov\' a, Morton and Warnecke \cite{LMW:2000}. Applying the generalized Lax equivalence principle, see \cite{FLMS}, and the recently developed concept of generalized solutions, the dissipative solutions, we will analyze the convergence of the genuinely multidimensional finite volume evolution Galerkin method. 

In our recent works \cite{FLM,LMY}, we have proved the convergence of the numerical solutions for the Euler equations obtained by the finite volume upwind method and the Godunov method, respectively. In general, we obtain only weak* convergence to a generalized, dissipative solution. If the limit is a weak entropy solution then the convergence is also strong. Moreover, if the Euler equations admit a strong solution then the numerical solutions converge strongly to the strong solution as long as the latter exists.

Our aim is to extend the previous convergence analysis to the genuinely multidimensional finite volume method based on the method of bicharacteristics. We point out that this is the first result available in the literature, where the convergence of the truly mutidimensional scheme is studied for multidimensional hyperbolic systems. In this context we refer to the recent work of Luk\' a\v cov\' a and Yuan \cite{LMY2023}, where the convergence of finite volume generalized Riemann problem method, see J. Li et al. \cite{Li-Zhang-Yang:1998}, \cite{LiDu}, has been analysed for scalar hyperbolic equation.

The hyperbolic conservation law on a bounded domain $\Omega \subset \R^d \, (d=2,3)$ reads
\begin{equation}\label{pde}
 \partial_t \vU + \Div \vF(\vU) = 0, \quad (t,x) \in (0,T) \times \Omega,
\end{equation}
where $\vU \in \R^N$ is the conservative variable and $\vF \in \R^{N\times d}$ is the flux function. 
System \eqref{pde} is accompanied with initial data $\vU(0,\cdot)=\vU_0$ on $\Omega$ and periodic boundary conditions.
Taking the second law of thermodynamics into account we further require that the entropy inequality holds, i.e.
\begin{equation}\label{I6}
\partial_t  \, S(\vU ) + \Div  \,\bm{Q}(\vU) \geq 0,\quad (t,x) \in (0,T) \times \Omega.
\end{equation}
We analyse specifically the linear wave equation system with
\begin{align}\label{wave}
&\vU = (\phi, \vu)^T, \quad \vF = \left( c\vu, \ c\phi \I \right)^T, \quad c>0,\\
& S = -\frac12 \abs{\vU}^2, \quad \vc{Q} = -c\phi\vU,
\end{align}
and the nonlinear Euler equations with
\begin{align}\label{euler1}
& \vU = (\vr, \vm, E)^T, \quad \vF =  \left(\vm,\, \frac{\vm \otimes \vm}{\vr}+ p \I ,\, \frac{\vm (E+p)}{\vr} \right)^T,\\
&S = C_v \vr s , \quad \vc{Q} = \frac{S \vm}{\vr} \quad \mbox{with } C_v =  \frac{1}{\gamma -1} \mbox{ and } s= \ln\left(\frac{p}{\vr^\gamma}\right),\quad \gamma>1.\label{euler2}
\end{align}

In this paper we concentrate on two-dimensional problem, i.e. $d=2$. The generalization to $d=3$ is possible but technical and the details will be presented in a future work.

In \eqref{wave}, $\phi$ denotes the wave pressure, $\vu=(u,v)$ is the velocity vector and $c=$constant is the wave speed. Further, in \eqref{euler1}, $\rho$ denotes the density, $\vm=(m_1,m_2)=(\rho u,\rho v)$ is the momentum, $E$ is the energy, $p=(\gamma-1)(E-\frac{1}{2}\frac{|\vm|^2}{\rho})$ is the pressure and $S$ is the entropy.

 It is well-known that for the multidimensional Euler equations there may exist infinitely many weak entropy solutions, i.e. the solutions satisfying \eqref{pde},\eqref{euler2} in the weak sense, cf. De Lellis and Sz\'ekelyhidi \cite{DeLellis-Szekelyhidi:2010}, Chiodaroli et al. \cite{Chiodaroli-DeLellis-Kreml:2015,Chiodaroli-Kreml-Macha-Schwarzacher:2021}, and Feireisl et al. \cite{Feireisl-Klingenberg-Kreml-Markfelder:2020}. This makes analysis of numerical schemes challenging. We present an elegant way of such analysis via a generalized Lax equivalence principle and dissipative solutions. 

The rest of the paper is organised as follows. Section 2 presents the numerical scheme. Section 3 is dedicated to the discussion of entropy stability, while Section 4 introduces the consistency formulation. In Section 5, we provide the convergence results. Numerical experiments illustrating theoretical results are presented in the last section.

%%%%%%%%%%%%%%%%%%%%%%
\section{Finite volume evolution Galerkin method}\label{sec_FVEG}
In this section we introduce the finite volume evolution Galerkin method which is originally proposed in Luk{\'a}{\v c}ov{\'a}  et al. \cite{LMW:2000,LSW:2002}.

\subsection{Notations}
Let $\mathcal{T}_h$ be a uniform structured mesh formed by squares with the mesh parameter $h\in(0,1)$ such that $\overline{\Omega} := \bigcup_{K \in \mathcal{T}_h} \overline{K}$. We denote by $\sigma = L|R$ the common face of cells $L$ and $R$, by $\Sigma$ the set of all faces, 
and by $\Sigma(K)$ the set of all faces of a generic cell $K\in \mathcal{T}_h$.

We consider the space of piecewise constant functions
\begin{equation}
\mathcal{Q}_h = \{ v \colon  v|_{K}  = \mbox{constant}, ~ \mbox{for all}~ K \in \mathcal{T}_h \}, 
\end{equation}
and define the projection operator
\begin{equation}
\Pi_h {\colon}  L^1(\Omega) \rightarrow \mathcal{Q}_h, \quad \projection{\phi} = \sum_{K \in \mathcal{T}_h}\frac{1_{K}(x)}{|K|}\int_K \phi (x) \dx,
\end{equation}
where $|K|\approx h^2$ is the Lebesgue measure of $K$ and $ 1_{K}$ is the characteristic function. 

Further, we introduce the average and jump operators at the interface $\sigma$ for $v \in \mathcal{Q}_h$:
\[
\avs{v}(x) = \frac{v^{\rm in}(x) + v^{\rm out}(x) }{2},\ \ \ \
\jump{ v }(x) = v^{\rm out}(x) - v^{\rm in}(x),\]
where
\[
v^{\rm out}(x) = \lim_{\delta \to 0+} v(x + \delta \vc{n}),\ \ \ \
v^{\rm in}(x) = \lim_{\delta \to 0+} v(x - \delta \vc{n}),
\]
and $\vn$ is the outer normal vector to $\sigma$.

\subsection{Finite volume evolution Galerkin method}

We now proceed to formulate the finite volume evolution Galerkin (FVEG) method for the hyperbolic conservation law \eqref{pde}. Letting $\vU^n_h\in \mathcal{Q}_h^N$,  the FVEG method reads
\begin{equation}\label{FVEG-ex}
\vU^{n+1}_K=\vU^n_K -   \int^{\Delta t}_0 \frac{1}{|K|}  \sum_{\sigma \in \Sigma(K)} \int_{\sigma}\vF\left(\vU^{{n+\tau/{\Delta t}}}\right) \cdot \vn \dS_x  \dtau, \quad \vU_K = (\vU_h)|_K,
\end{equation}
where $\TS$ is the time step and $\vU^{{n+\tau/{\Delta t}}}$ is evolved by using the approximate evolution operator $E_{\tau}$ 
\begin{equation}\label{EG-app}
\vU^{{n+\tau/{\Delta t}}}(x)=E_{\tau}\vU_h^n, \quad x\in \sigma.
\end{equation}

Time interval $(0,T)$, $T>0$, is divided into time subintervals $(t^n, t^{n+1})$, $t^{n+1}=t^n+\Delta t$.

Next, we approximate the time integral with the midpoint rule and the space integral over cell-interface $\sigma$ with the Simpson quadrature rule, and obtain
\begin{subequations}\label{scheme}
\begin{align}
&\vU^{n+1}_K=\vU^n_K -   \frac{|\sigma| \TS}{|K|}  \sum_{\sigma \in \Sigma(K)} \vF_{\sigma}^{EG}\cdot \vn, \\ \label{FEG}
& \vF_{\sigma}^{EG} =  \frac16 \vF\Big(\vU^{EG}_A\Big) + \frac46 \vF\Big(\vU^{EG}_S \Big)+ \frac16 \vF\Big(\vU^{EG}_B\Big), 
\end{align} 
\end{subequations}
where $\vU^{EG}_X = \vU^{n+1/2}(X),~X \in \{A,B,S\}$ and $A,B$ are the corner points of the cell-interface $\sigma$, $S$ is the barycenter of $\sigma$, see Figure~\ref{fig:mesh}. 
For the prediction of $\vU^{EG}_X $ we have used the second-order approximate evolution operator in \eqref{EG-app}, which will be introduced in Section~\ref{sec_EG2-wave} and Section~\ref{sec_EG2-Euler}.

\begin{figure}[!h]
\centering
\begin{tikzpicture}[scale=1.0]
\draw[-,very thick](0,-2)--(2.5,-2)--(2.5,2)--(0,2)--(0,-2)--(-2.5,-2)--(-2.5,2)--(0,2);
\draw[-,very thick](0,-2)--(2.5,-2)--(2.5,0)--(0,0)--(0,-2)--(-2.5,-2)--(-2.5,0)--(0,0);
\draw[-,very thick](0,-2)--(2.5,-2)--(2.5,4)--(0,4)--(0,-2)--(-2.5,-2)--(-2.5,4)--(0,4);

\draw[gray] (0,1) circle (1);
\draw[red] (0,2) circle (1);

\draw[-,very thick, green=90!, pattern=north east lines, pattern color=green!30] (0,0)--(1.25,0)--(1.25,2)--(0,2)--(0,0);
\draw[-,very thick, blue=90!, pattern= north west lines, pattern color=blue!30] (0,0)--(0,2)--(-1.25,2)--(-1.25,0)--(0,0);
\draw[-,very thick, orange=90!, pattern=north east lines, pattern color=orange!30] (0,1)--(-1.25,1)--(-1.25,3)--(0,3)--cycle;
\draw[-,very thick, yellow=90!, pattern= north west lines, pattern color=yellow!30] (0,1)--(0,3)--(1.25,3)--(1.25,1)--(0,1);

\path node at (-1.75,1) { $L$};
\path node at (1.75,1) { $R$};
\path node at (-1.75,3) { $UL$};
\path node at (1.75,3) { $UR$};
\path node at (-1.75,-1) { $BL$};
\path node at (1.75,-1) { $BR$};
\path node at (-1.25,1) {$ \bullet$};
\path node at (-1.4,0.8) {$ x_L$};
\path node at (1.25,1) {$\bullet$};
\path node at (1.4,0.8) {$ x_R$};
\path node at (0,1) {$\bullet$};
\path node at (0.3,0.8) {$S$};
\path node at (0,2) {$\bullet$};
\path node at (0,0) {$\bullet$};
\path node at (0.3,1.8) {$A$};
\path node at (0.3,-0.2) {$B$};
\path node at (0,3) {$\bullet$};
\path node at (0,-1) {$\bullet$};
\path node at (-1.25,3) {$\bullet$};
\path node at (1.25,3) {$\bullet$};
\path (-0.3,0.9) node[rotate=90] { $\sigma={L|R}$};
\path (-0.3,3.2) node[rotate=90] { $\sigma_+$};
\path (-0.3,-1.1) node[rotate=90] { $\sigma_-$};
\path (-1.4,-0.2) node[] { $\sigma_{L^-}$};
\path (-1.4,2.2) node[] { $\sigma_{L^+}$};
\path (1.4,-0.2) node[] { $\sigma_{R^-}$};
\path (1.4,2.2) node[] { $\sigma_{R^+}$};
 \end{tikzpicture}
\caption{Illustration of the update point values on a Cartesian mesh.}
 \label{fig:mesh}
\end{figure}
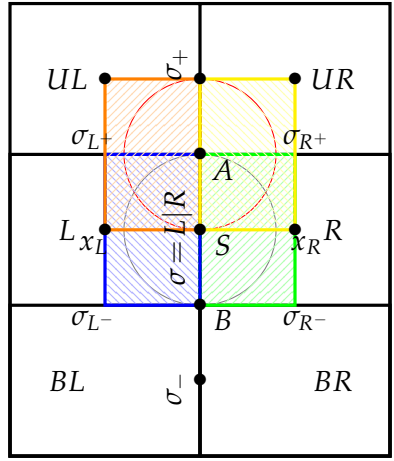

A crucial ingredient of the FVEG method is the evolution operator $E_{\tau}$ that takes mutidimensional evolution of a solution into account. In Appendices A and B we present such evolution operator for the wave equation system and the linearized Euler equations. Both evolution operators give the exact solution at a new time step $(t+\Delta t)$, however they are implicit in time due to the integration over the mantle of the bicharacteristic cone. In the following sections we present suitable approximation yielding time explicit evolution of a numerical solution.

\subsection{FVEG2  for the wave equation system}\label{sec_EG2-wave}
 In \cite{LMW:2000}, various approximations of the exact evolution operator \eqref{2.5}-\eqref{2.7} were derived. In this paper we will work with the second order approximate of time integrals %$\int^{t+\Delta t}_{t}\int^{2\pi}_0S({\tilde t},\theta)\dthe\dt$,~$\int^{t+\Delta t}_{t}\int^{2\pi}_0S({\tilde t},\theta)\cos\theta \dthe\dt$ and $\int^{t+\Delta t}_{t}\int^{2\pi}_0S({\tilde t},\theta)\sin\theta\dthe\dt$ 
 \[
 \int^{t+\Delta t}_{t}\int^{2\pi}_0S({\tilde t},\theta)\dthe\dt, \ \  \int^{t+\Delta t}_{t}\int^{2\pi}_0S({\tilde t},\theta)\cos\theta \dthe\dt \ \mbox{ and } \ \int^{t+\Delta t}_{t}\int^{2\pi}_0S({\tilde t},\theta)\sin\theta\dthe\dt
 \]
 based on the trapezoidal quadrature, here $S( \tilde t,\theta)=c[u_x(\tilde x,\tilde y, \tilde t)\sin^2\theta-(u_y(\tilde x,\tilde y,\tilde t)+v_x(\tilde x,\tilde y,\tilde t))\sin\theta\cos\theta+v_y(\tilde x,\tilde y,\tilde t)\cos^2\theta],$ cf.\eqref{A4}. The application of integration by parts yields the following approximate evolution operator. To be consistent with \cite{LMW:2000}, we denote it EG2: 
 \begin{subequations}\label{EG2-w}
\begin{equation}\label{2.9}
\phi_P=\frac{1}{\pi}\int^{2\pi}_{0}\phi_Q-u_Q\cos\theta-v_Q\sin\theta\dthe-\phi_{P'}+O(\Delta t^3),
\end{equation}
\begin{equation}\label{2.10}
u_P=\frac{1}{\pi}\int^{2\pi}_{0}-\phi_Q\cos\theta+u_Q\left(2\cos^2\theta-\frac{1}{2}\right)+2v_Q\sin\theta\cos\theta\dthe+O(\Delta t^3),
\end{equation}
\begin{equation}\label{2.11}
v_P=\frac{1}{\pi}\int^{2\pi}_{0}-\phi_Q\sin\theta+2u_Q\sin\theta\cos\theta+v_Q\left(2\sin^2\theta-\frac{1}{2}\right) \dthe+O(\Delta t^3),
\end{equation}
 \end{subequations}
where $P=(x,y,t^{n+1})$, $P^\prime=(x,y,t^n)$, $Q=(x-c\cos{\theta}\Delta t,y-c\sin{\theta}\Delta t, t^n)$.

Let us denote by $L, R, UL, UR, BL, BR$ the left, right, upper left, upper right, bottom left, bottom right neighbouring mesh cells to the edge $\sigma=L|R$, respectively, see Figure \ref{fig:mesh}.  Applying \eqref{2.9}-\eqref{2.11} we obtain following explicit expressions of $(\phi_P, u_P, v_P)$ for $P=S$ and $P=A$. The expression for the case $P=B$ shall be derived through mirror symmetry of $P=A$. 

\paragraph{Case $P=S$:}
\begin{subequations}\label{EG2-w-S}
\begin{eqnarray}\label{2.12}
\phi_P&=&\frac{1}{\pi}\bigg{(}\int^{\frac{\pi}{2}}_{-\frac{\pi}{2}}\phi_R\dthe+\int^{\frac{3\pi}{2}}_{\frac{\pi}{2}}\phi_L\dthe\bigg{)}-\frac{1}{\pi}\bigg{(}\int^{\frac{\pi}{2}}_{-\frac{\pi}{2}}u_R\cos\theta \dthe+\int^{\frac{3\pi}{2}}_{\frac{\pi}{2}}u_L\cos\theta\dthe\bigg{)}\\\nonumber
&~&-\frac{1}{\pi}\bigg{(}\int^{\frac{\pi}{2}}_{-\frac{\pi}{2}}v_R\sin\theta\dthe+\int^{\frac{3\pi}{2}}_{\frac{\pi}{2}}v_L\sin\theta\dthe\bigg{)}-\phi_{P'}\\\nonumber
&=&(\phi_R+\phi_L)-\frac{2}{\pi}(u_R-u_L)-\phi_{P'},
\end{eqnarray}
further, we have
\begin{eqnarray}
u_P&=&\frac{1}{\pi}\bigg{(}\int^{\frac{\pi}{2}}_{-\frac{\pi}{2}}-\phi_R\cos\theta\dthe+\int^{\frac{3\pi}{2}}_{\frac{\pi}{2}}-\phi_L\cos\theta\dthe\bigg{)}+\frac{1}{\pi}\bigg{(}\int^{\frac{\pi}{2}}_{-\frac{\pi}{2}}u_R(2\cos^2\theta-\frac{1}{2})\dthe\\\nonumber
&~&+\int^{\frac{3\pi}{2}}_{\frac{\pi}{2}}u_L(2\cos^2\theta-\frac{1}{2})\dthe\bigg{)}+\frac{1}{\pi}\bigg{(}\int^{\frac{\pi}{2}}_{-\frac{\pi}{2}}2v_R\sin\theta\cos\theta\dthe+\int^{\frac{3\pi}{2}}_{\frac{\pi}{2}}2v_L\sin\theta\cos\theta\dthe\bigg{)}\\\nonumber
&=& -\frac{2}{\pi}(\phi_R-\phi_L)+\frac{1}{2}(u_R+u_L),
\end{eqnarray}
\begin{eqnarray}
v_P&=&\frac{1}{\pi}\bigg{(}\int^{\frac{\pi}{2}}_{-\frac{\pi}{2}}-\phi_R\sin\theta\dthe+\int^{\frac{3\pi}{2}}_{\frac{\pi}{2}}-\phi_L\sin\theta\dthe\bigg{)}+\frac{1}{\pi}\bigg{(}\int^{\frac{\pi}{2}}_{-\frac{\pi}{2}}2u_R\sin\theta\cos\theta\dthe\\\nonumber
&~&+\int^{\frac{3\pi}{2}}_{\frac{\pi}{2}}2u_L\sin\theta\cos\theta\dthe\bigg{)}+\frac{1}{\pi}\bigg{(}\int^{\frac{\pi}{2}}_{-\frac{\pi}{2}}v_R(2\sin^2\theta-\frac{1}{2})\dthe+\int^{\frac{3\pi}{2}}_{\frac{\pi}{2}}u_L(2\sin^2\theta-\frac{1}{2})\dthe\bigg{)}\\\nonumber
&=&\frac{1}{2}(v_R+v_L).
\end{eqnarray}
\end{subequations}

\paragraph{Case $P=A$:}
\begin{subequations}\label{EG2-w-A}
\begin{eqnarray}\label{2.15}
\phi_P&=&\frac{1}{\pi}\bigg{(}\int^0_{-\frac{\pi}{2}}\phi_R\dthe+\int^{\frac{\pi}{2}}_0\phi_{UR}\dthe+\int^{\pi}_{\frac{\pi}{2}}\phi_{UL}\dthe+\int^{\frac{3\pi}{2}}_{\pi}\phi_L\dthe\bigg{)}\\\nonumber
&&-\frac{1}{\pi}\bigg{(}\int^0_{-\frac{\pi}{2}}u_{R}\cos\theta\dthe+\int^{\frac{\pi}{2}}_0u_{UR}\cos\theta\dthe+\int^{\pi}_{\frac{\pi}{2}}u_{UL}\cos\theta\dthe+\int^{\frac{3\pi}{2}}_{\pi}u_L\cos\theta\dthe\bigg{)}\\\nonumber
&&-\frac{1}{\pi}\bigg{(}\int^0_{-\frac{\pi}{2}}v_{R}\sin\theta\dthe+\int^{\frac{\pi}{2}}_0v_{UR}\sin\theta\dthe+\int^{\pi}_{\frac{\pi}{2}}v_{UL}\sin\theta\dthe+\int^{\frac{3\pi}{2}}_{\pi}v_L\sin\theta\dthe\bigg{)}-\phi_{P'}\\\nonumber
&=&\frac{1}{2}(\phi_R+\phi_{UR}+\phi_{UL}+\phi_L)-\frac{1}{\pi}(u_R+u_{UR}-u_{UL}-u_{L})+\frac{1}{\pi}(v_{R}-v_{UR}-v_{UL}+v_{L})-\phi_{P'},
\end{eqnarray}
\begin{eqnarray}
u_P&=&-\frac{1}{\pi}\bigg{(}\int^0_{-\frac{\pi}{2}}\phi_R\cos\theta\dthe+\int^{\frac{\pi}{2}}_0\phi_{UR}\cos\theta\dthe+\int^{\pi}_{\frac{\pi}{2}}\phi_{UL}\cos\theta\dthe+\int^{\frac{3\pi}{2}}_{\pi}\phi_L\cos\theta\dthe\bigg{)}\\\nonumber
&&+\frac{1}{\pi}\bigg{(}\int^0_{-\frac{\pi}{2}}u_{R}\left(2\cos^2\theta-\frac{1}{2}\right)\dthe+\int^{\frac{\pi}{2}}_0u_{UR}\left(2\cos^2\theta-\frac{1}{2}\right)\dthe+\int^{\pi}_{\frac{\pi}{2}}u_{UL}\left(2\cos^2\theta-\frac{1}{2}\right)\dthe\\\nonumber
&&+\int^{\frac{3\pi}{2}}_{\pi}u_L\left(2\cos^2\theta-\frac{1}{2}\right)\dthe\bigg{)}+\frac{1}{\pi}\bigg{(}\int^0_{-\frac{\pi}{2}}2v_{R}\sin\theta\cos\theta\dthe+\int^{\frac{\pi}{2}}_02v_{UR}\sin\theta\cos\theta\dthe\\\nonumber
&&+\int^{\pi}_{\frac{\pi}{2}}2v_{UL}\sin\theta\cos\theta\dthe+\int^{\frac{3\pi}{2}}_{\pi}2v_L\sin\theta\cos\theta\dthe\bigg{)}\\\nonumber
&=&-\frac{1}{\pi}(\phi_R+\phi_{UR}-\phi_{UL}-\phi_{L})+\frac{1}{4}(u_R+u_{UR}+u_{UL}+u_L)+\frac{1}{\pi}(-v_{R}+v_{UR}-v_{UL}+v_L),
\end{eqnarray}
\begin{eqnarray}
v_P&=&-\frac{1}{\pi}\bigg{(}\int^0_{-\frac{\pi}{2}}\phi_R\sin\theta\dthe+\int^{\frac{\pi}{2}}_0\phi_{UR}\sin\theta\dthe+\int^{\pi}_{\frac{\pi}{2}}\phi_{UL}\sin\theta\dthe+\int^{\frac{3\pi}{2}}_{\pi}\phi_L\sin\theta\dthe\bigg{)}\\\nonumber
&&+\frac{1}{\pi}\bigg{(}\int^0_{-\frac{\pi}{2}}2u_{R}\sin\theta\cos\theta\dthe+\int^{\frac{\pi}{2}}_02u_{UR}\sin\theta\cos\theta\dthe+\int^{\pi}_{\frac{\pi}{2}}2u_{UL}\sin\theta\cos\theta\dthe\\\nonumber
&&+\int^{\frac{3\pi}{2}}_{\pi}2u_L\sin\theta\cos\theta\dthe\bigg{)}+\frac{1}{\pi}\bigg{(}\int^0_{-\frac{\pi}{2}}v_{R}\left(2\sin^2\theta-\frac{1}{2}\right)\dthe+\int^{\frac{\pi}{2}}_0v_{UR}\left(2\sin^2\theta-\frac{1}{2}\right)\dthe\\\nonumber
&&+\int^{\pi}_{\frac{\pi}{2}}v_{UL}\left(2\sin^2\theta-\frac{1}{2}\right)\dthe+\int^{\frac{3\pi}{2}}_{\pi}v_L\left(2\sin^2\theta-\frac{1}{2}\right)\dthe\bigg{)}\\\nonumber
&=&\frac{1}{\pi}(\phi_R-\phi_{UR}-\phi_{UL}+\phi_{L})+\frac{1}{\pi}(-u_R+u_{UR}-u_{UL}+u_L)+\frac{1}{4}(v_{R}+v_{UR}+v_{UL}+v_L).
\end{eqnarray}
\end{subequations}

\subsection{FVEG2 for the Euler equations}\label{sec_EG2-Euler}
Applying the same approximation of the time integrals to the exact evolution operator for the linearized Euler equations, see Appendix \ref{evo-Euler}, we obtain the following approximate evolution operator for the density, velocities and pressure.
 \begin{subequations}\label{EG2-E}
\begin{equation}\label{2.23}
\rho_P=\rho_{P^{\prime}}-\frac{p_{P^{\prime}}}{c^{{\prime}2}}+\frac{1}{\pi}\int^{2\pi}_{0}\frac{p_Q}{c^{{\prime}2}}-\frac{\rho^{\prime}}{c^{\prime}}u_Q\cos\theta-\frac{\rho^{\prime}}{c^{\prime}}v_Q\sin\theta\dthe+O(\Delta t^3),
\end{equation}
\begin{equation}\label{2.24}
u_P=\frac{1}{\pi}\int^{2\pi}_{0}-\frac{p_Q}{\rho^{\prime}c^{\prime}}\cos\theta+u_Q(2\cos^2\theta-\frac{1}{2})+2v_Q\sin\theta\cos\theta\dthe+O(\Delta t^3),
\end{equation}
\begin{equation}\label{2.25}
v_P=\frac{1}{\pi}\int^{2\pi}_{0}-\frac{p_Q}{\rho^{\prime}c^{\prime}}\sin\theta+2u_Q\sin\theta\cos\theta+v_Q(2\sin^2\theta-\frac{1}{2}) \dthe+O(\Delta t^3),
\end{equation}
\begin{equation}\label{2.26}
p_P=-p_{P^{\prime}}+\frac{1}{\pi}\int^{2\pi}_0p_Q-\rho^{\prime}c^{\prime}u_Q\cos\theta-\rho^{\prime}c^{\prime}v_Q\sin\theta\dthe+O(\Delta t^3),
\end{equation}
 \end{subequations}
where $P=(x,y,t^{n+1})$, $P^\prime=(x-u^\prime\Delta t,y-v^\prime\Delta t,t^n)$, $Q=(x-(u^\prime-c^\prime\cos{\theta})\Delta t,y-(v^\prime-c^\prime\sin{\theta})\Delta t, t^n)$. We recall that we have linearized locally the Euler equations and $\rho^\prime, u^\prime, v^\prime, c^\prime$ are fixed linearized density, flow and sound velocity, see Appendix \ref{evo-Euler}.

Applying the approximate evolution operator \eqref{2.23}-\eqref{2.26} to a piecewise constant data on a regular rectangular grid, we obtain analogous expression for the predicted solution $\vU^{n+1/2}(P)$ for $P=A, B$ and $P=S$ as above. Here $S$ is a midpoint of  an arbitrary fixed edge $\sigma=L|R$ and $A, B$ are the corner points.

Note that the bicharacteristic cone is slanted by $(u^\prime\frac{\Delta t}{2}, v^\prime\frac{\Delta t}{2})$ and therefore the base of the cone may get various positions. Taking the CFL stability condition $\max \big((u^\prime+c^\prime)\Delta t, (v^\prime+c^\prime)\Delta t\big)\leqslant h$ into account the base circle only falls down into neighbouring cells to the corresponding edge $\sigma=L|R$.

In what follow, we present two cases for $A$ and $S$.
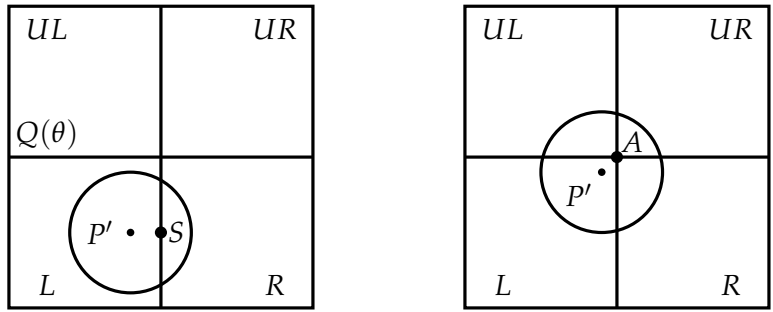
\begin{figure}[!h]
\centering 
\begin{tikzpicture}
\draw[help lines, black, very thick] (-2,-2) -- (2,-2)--(2,2)--(-2,2)--cycle;
\draw [very thick](-2,0)--(2,0);
\draw[very thick](0,-2)--(0,2);
\draw [very thick] (-0.4,-1) circle (0.8);
\fill (0,-1) circle (0.08);
\fill (-0.4,-1) circle (0.05);
\node at (0.2,-1) {$S$};
\node at (-0.8,-1) {$P^\prime$};
\node at (-1.5,0.3){$Q(\theta)$};
\node at (-1.5,1.7) {$UL$};
\node at (1.5,1.7) {$UR$};
\node at (-1.5,-1.7) {$L$};
\node at (1.5,-1.7) {$R$};

\draw[help lines, black, very thick] (4,-2) -- (8,-2)--(8,2)--(4,2)--cycle;
\draw [very thick](4,0)--(8,0);
\draw[very thick](6,-2)--(6,2);
\node at (6.2,0.2) {$A$};
\node at (4.5,1.7) {$UL$};
\node at (7.5,1.7) {$UR$};
\node at (4.5,-1.7) {$L$};
\node at (7.5,-1.7) {$R$};
\node at (5.5,-0.5) {$P^\prime$};
\fill (6,0) circle (0.08);
\fill (5.8,-0.2) circle (0.05);
\draw [very thick] (5.8,-0.2) circle (0.8);
\end{tikzpicture}
\caption{Illustration of possible update of point values for the linearized Euler equations.}
\end{figure}

\paragraph{Case $P=S$:}
we obtain the following approximation,
\begin{subequations}\label{EG2-E-S}
\begin{eqnarray}\label{2.33}
\rho_P&=&\rho_{P^{\prime}}-\frac{p_{P^{\prime}}}{c^{{\prime}2}}+\frac{1}{\pi}\int^{2\pi}_{0}\frac{p_Q}{c^{{\prime}2}}-\frac{\rho^{\prime}}{c^{\prime}}u_Q\cos\theta-\frac{\rho^{\prime}}{c^{\prime}}v_Q\sin\theta\dthe\\\nonumber
&=&\rho_L-\frac{p_{L}}{c^{{\prime}2}}+\frac{1}{\pi c^{{\prime}2}}\bigg{(}\int^\alpha_{-\alpha}p_R\dthe+\int^{2\pi-\alpha}_{\alpha}p_L\dthe\bigg{)}-\frac{\rho^\prime}{\pi c^\prime}\bigg{(}\int^\alpha_{-\alpha}u_R\cos\theta\dthe\\\nonumber
&&+\int^{2\pi-\alpha}_{\alpha}u_L\cos\theta\dthe\bigg{)}-\frac{\rho^\prime}{\pi c^\prime}\bigg{(}\int^\alpha_{-\alpha}v_R\sin\theta\dthe+\int^{2\pi-\alpha}_{\alpha}v_L\sin\theta\dthe\bigg{)}\\\nonumber
&=&\rho_L+\frac{2\alpha}{\pi c^{{\prime}2}}(p_R-p_L)-\frac{2\rho^\prime\sin\alpha}{\pi c^\prime}(u_R-u_L),
\end{eqnarray}
where $\alpha=\arccos(u^\prime/c^\prime)$,
\begin{eqnarray}\label{2.34}
u_P&=&\frac{1}{\pi}\int^{2\pi}_{0}-\frac{p_Q}{\rho^{\prime}c^{\prime}}\cos\theta+u_Q(2\cos^2\theta-\frac{1}{2})+2v_Q\sin\theta\cos\theta\dthe\\\nonumber
&=&-\frac{1}{\pi\rho^{\prime}c^{\prime}}\bigg{(}\int^\alpha_{-\alpha}p_R\cos\theta\dthe+\int^{2\pi-\alpha}_{\alpha}p_L\cos\theta\dthe\bigg{)}+\frac{1}{\pi}\bigg{(}\int^\alpha_{-\alpha}u_R(2\cos^2\theta-\frac{1}{2})\dthe\\\nonumber
&&+\int^{2\pi-\alpha}_{\alpha}u_L(2\cos^2\theta-\frac{1}{2})\dthe\bigg{)}+\frac{1}{\pi}\bigg{(}\int^\alpha_{-\alpha}v_R2\sin\theta\cos\theta\dthe+\int^{2\pi-\alpha}_{\alpha}v_L2\sin\theta\cos\theta\dthe\bigg{)}\\\nonumber
&=&-\frac{2\alpha\sin\alpha}{\pi\rho^{\prime}c^{\prime}}(p_R-p_L)+\frac{\sin 2\alpha+\alpha}{\pi}(u_R-u_L)+u_L.
\end{eqnarray}
Similarly, we can express $v_P$ as
\begin{eqnarray}\label{2.35}
v_P&=&\frac{1}{\pi}\int^{2\pi}_{0}-\frac{p_Q}{\rho^{\prime}c^{\prime}}\sin\theta+2u_Q\sin\theta\cos\theta+v_Q(2\sin^2\theta-\frac{1}{2})\dthe\\\nonumber
&=&-\frac{1}{\pi\rho^{\prime}c^{\prime}}\bigg{(}\int^\alpha_{-\alpha}p_R\sin\theta\dthe+\int^{2\pi-\alpha}_{\alpha}p_L\sin\theta\dthe\bigg{)}+\frac{1}{\pi}\bigg{(}\int^\alpha_{-\alpha}u_R2\sin\theta\cos\theta\dthe\\\nonumber
&&+\int^{2\pi-\alpha}_{\alpha}u_L2\sin\theta\cos\theta\dthe\bigg{)}+\frac{1}{\pi}\bigg{(}\int^\alpha_{-\alpha}v_R(2\sin^2\theta-\frac{1}{2})\dthe+\int^{2\pi-\alpha}_{\alpha}v_L(2\sin^2\theta-\frac{1}{2})\dthe\bigg{)}
\\\nonumber
&=&\frac{\alpha-\sin 2\alpha}{\pi}(v_R-v_L)+v_L,
\end{eqnarray}
and
\begin{eqnarray}\label{2.36}
p_p&=&-p_{P^{\prime}}+\frac{1}{\pi}\int^{2\pi}_0p_Q-\rho^{\prime}c^{\prime}u_Q\cos\theta-\rho^{\prime}c^{\prime}v_Q\sin\theta\dthe\\\nonumber
&=&-p_L+\frac{1}{\pi}\bigg{(}\int^\alpha_{-\alpha}p_R\dthe+\int^{2\pi-\alpha}_{\alpha}p_L\dthe\bigg{)}-\frac{\rho^\prime c^\prime}{\pi }\bigg{(}\int^\alpha_{-\alpha}u_R\cos\theta\dthe+\int^{2\pi-\alpha}_{\alpha}u_L\cos\theta\dthe\bigg{)}\\\nonumber
&&-\frac{\rho^\prime c^\prime}{\pi }\bigg{(}\int^\alpha_{-\alpha}v_R\sin\theta\dthe+\int^{2\pi-\alpha}_{\alpha}v_L\sin\theta\dthe\bigg{)}\\\nonumber
&=&p_L+\frac{2\alpha}{\pi}(p_R-p_L)-\frac{2\rho^\prime c^\prime\sin\alpha}{\pi}(u_R-u_L).
\end{eqnarray}
\end{subequations}

\paragraph{Case $P=A$:}
the EG2 approximate evolution operator reads
\begin{subequations}\label{EG2-E-A}
\begin{eqnarray}\label{2.37}
\rho_P&=&\rho_L-\frac{2p_L}{c^{{\prime}2}}+\frac{1}{\pi}\bigg{(}\int^{\alpha_1}_{-\alpha_2}p_R\dthe+\int^{\alpha_2}_{\alpha_1}p_{UR}\dthe+\int^{\pi-\alpha_1}_{\alpha_2}p_{UL}\dthe+\int^{2\pi-\alpha_2}_{\pi-\alpha_1}p_L\dthe\bigg{)}\\\nonumber
&&-\frac{\rho^\prime}{\pi c^\prime}\bigg{(}\int^{\alpha_1}_{-\alpha_2}u_{R}\cos\theta\dthe+\int^{\alpha_2}_{\alpha_1}u_{UR}\cos\theta\dthe+\int^{\pi-\alpha_1}_{\alpha_2}u_{UL}\cos\theta\dthe+\int^{2\pi-\alpha_2}_{\pi-\alpha_1}u_L\cos\theta\dthe\bigg{)}\\\nonumber
&&-\frac{\rho^\prime}{\pi c^\prime}\bigg{(}\int^{\alpha_1}_{-\alpha_2}v_{R}\sin\theta\dthe+\int^{\alpha_2}_{\alpha_1}v_{UR}\sin\theta\dthe+\int^{\pi-\alpha_1}_{\alpha_2}v_{UL}\sin\theta\dthe+\int^{2\pi-\alpha_2}_{\pi-\alpha_1}v_L\sin\theta\dthe\bigg{)}\\\nonumber
&=&\rho_L-\frac{1}{c^{{\prime}2}}(p_L-p_{UL})+\frac{\alpha_1}{\pi c^{\prime 2}}(p_R-p_{UR}-p_{UL}+p_L)+\frac{\alpha_2}{\pi c^{\prime 2}}(p_R+p_{UR}-p_{UL}-p_L)
\\\nonumber
&&-\frac{\rho^\prime\sin\alpha_1}{\pi c^\prime}(u_R-u_{UR}+u_{UL}-u_L)-\frac{\rho^\prime\sin\alpha_2}{\pi c^\prime}(u_R+u_{UR}-u_{UL}-u_L)\\\nonumber
&&+\frac{\rho^\prime\cos\alpha_1}{\pi c^\prime}(v_R-v_{UR}-v_{UL}+v_L)-\frac{\rho^\prime\cos\alpha_2}{\pi c^\prime}(v_R-v_{UR}+v_{UL}-v_L)),
\end{eqnarray}
\begin{eqnarray}\label{2.38}
u_P&=&-\frac{1}{\pi\rho^{\prime}c^{\prime}}\bigg{(}\int^{\alpha_1}_{-\alpha_2}p_R\cos\theta\dthe+\int^{\alpha_2}_{\alpha_1}p_{UR}\cos\theta\dthe+\int^{\pi-\alpha_1}_{\alpha_2}p_{UL}\cos\theta\dthe\\\nonumber
&&+\int^{2\pi-\alpha_2}_{\pi-\alpha_1}p_L\cos\theta\dthe\bigg{)}+\frac{1}{\pi}\bigg{(}\int^{\alpha_1}_{-\alpha_2}u_R(2\cos^2\theta-\frac{1}{2})\dthe+\int^{\alpha_2}_{\alpha_1}u_{UR}(2\cos^2\theta-\frac{1}{2})\dthe\\\nonumber
&&+\int^{\pi-\alpha_1}_{\alpha_2}u_{UL}(2\cos^2\theta-\frac{1}{2})\dthe+\int^{2\pi-\alpha_2}_{\pi-\alpha_1}u_L(2\cos^2\theta-\frac{1}{2})\dthe\bigg{)}+\frac{1}{\pi}\bigg{(}\int^{\alpha_1}_{-\alpha_2}v_R2\sin\theta\cos\theta\dthe\\\nonumber
&&+\int^{\alpha_2}_{\alpha_1}v_{UR}2\sin\theta\cos\theta\dthe+\int^{\pi-\alpha_1}_{\alpha_2}v_{UL}2\sin\theta\cos\theta\dthe+\int^{2\pi-\alpha_2}_{\pi-\alpha_1}v_L2\sin\theta\cos\theta\dthe\bigg{)}\\\nonumber
&=&-\frac{\sin\alpha_1}{\rho\prime c^\prime}(p_R-p_{UR}+p_{UL}-p_L)-\frac{\sin\alpha_2}{\rho\prime c^\prime}(p_R+p_{UR}-p_{UL}-p_L)\\\nonumber
&&+\frac{\sin{2\alpha_1}+\alpha_1}{2\pi}(u_R-u_{UR}-u_{UL}+u_L)+\frac{\sin{2\alpha_2}+\alpha_2}{2\pi}(u_R+u_{UR}-u_{UL}-u_L)\\\nonumber
&&+\frac{\sin^2\alpha_1-\sin^2\alpha_2}{\pi}(v_R-v_{UR}+v_{UL}-v_L)+\frac{1}{2}u_{UL}+\frac{1}{2}u_{L},
\end{eqnarray}
\begin{eqnarray}\label{2.39}
v_P&=&-\frac{1}{\pi\rho^{\prime}c^{\prime}}\bigg{(}\int^{\alpha_1}_{-\alpha_2}p_R\sin\theta\dthe+\int^{\alpha_2}_{\alpha_1}p_{UR}\sin\theta\dthe+\int^{\pi-\alpha_1}_{\alpha_2}p_{UL}\sin\theta\dthe\\\nonumber
&&+\int^{2\pi-\alpha_2}_{\pi-\alpha_1}p_L\sin\theta\dthe\bigg{)}+\frac{1}{\pi}\bigg{(}\int^{\alpha_1}_{-\alpha_2}u_R2\sin\theta\cos\theta\dthe+\int^{\alpha_2}_{\alpha_1}u_{UR}2\sin\theta\cos\theta\dthe\\\nonumber
&&+\int^{\pi-\alpha_1}_{\alpha_2}u_{UL}2\sin\theta\cos\theta\dthe+\int^{2\pi-\alpha_2}_{\pi-\alpha_1}u_{L}2\sin\theta\cos\theta\dthe\bigg{)}+\frac{1}{\pi}\bigg{(}\int^{\alpha_1}_{-\alpha_2}v_R(2\sin^2\theta-\frac{1}{2})\dthe\\\nonumber
&&+\int^{\alpha_2}_{\alpha_1}v_{UR}(2\sin^2\theta-\frac{1}{2})\dthe+\int^{\pi-\alpha_1}_{\alpha_2}v_{UL}(2\sin^2\theta-\frac{1}{2})\dthe+\int^{2\pi-\alpha_2}_{\pi-\alpha_1}v_L(2\sin^2\theta-\frac{1}{2})\dthe\bigg{)}\\\nonumber
&=&\frac{\cos\alpha_1}{\rho\prime c^\prime}(p_R-p_{UR}-p_{UL}+p_L)-\frac{\cos\alpha_2}{\rho\prime c^\prime}(p_R-p_{UR}+p_{UL}-p_L)\\\nonumber
&&+\frac{\alpha_1-\sin{2\alpha_1}}{2\pi}(v_R-v_{UR}-v_{UL}+v_L)+\frac{\alpha_2-\sin{2\alpha_2}}{2\pi}(v_R+v_{UR}-v_{UL}-v_L)\\\nonumber
&&+\frac{\sin^2\alpha_1-\sin^2\alpha_2}{\pi}(u_R-u_{UR}+u_{UL}-u_L)+\frac{1}{2}v_{UL}+\frac{1}{2}v_{L},
\end{eqnarray}
\begin{eqnarray}\label{2.40}
p_P&=&-p_{L}+\frac{1}{\pi}\bigg{(}\int^{\alpha_1}_{-\alpha_2}p_R\dthe+\int^{\alpha_2}_{\alpha_1}p_{UR}\dthe+\int^{\pi-\alpha_1}_{\alpha_2}p_{UL}\dthe+\int^{2\pi-\alpha_2}_{\pi-\alpha_1}p_L\dthe\bigg{)}\\\nonumber
&&+\frac{\rho^\prime c^\prime}{\pi}\bigg{(}\int^{\alpha_1}_{-\alpha_2}u_{R}\cos\theta\dthe+\int^{\alpha_2}_{\alpha_1}u_{UR}\cos\theta\dthe+\int^{\pi-\alpha_1}_{\alpha_2}u_{UL}\cos\theta\dthe+\int^{2\pi-\alpha_2}_{\pi-\alpha_1}u_L\cos\theta\dthe\bigg{)}\\\nonumber
&&+\frac{\rho^\prime c^\prime}{\pi}\bigg{(}\int^{\alpha_1}_{-\alpha_2}v_{R}\sin\theta\dthe+\int^{\alpha_2}_{\alpha_1}v_{UR}\sin\theta\dthe+\int^{\pi-\alpha_1}_{\alpha_2}v_{UL}\sin\theta\dthe+\int^{2\pi-\alpha_2}_{\pi-\alpha_1}v_L\sin\theta\dthe\bigg{)}\\\nonumber
&=&\frac{\alpha_1}{\pi}(p_R-p_{UR}-p_{UL}+p_L)+\frac{\alpha_2}{\pi}(p_R+p_{UR}-p_{UL}-p_L)-\frac{\rho^\prime c^\prime\sin\alpha_1}{\pi }(u_R-u_{UR}+u_{UL}-u_L)\\\nonumber
&&-\frac{\rho^\prime c^\prime\sin\alpha_2}{\pi }(u_R+u_{UR}-u_{UL}-u_L)+\frac{\rho^\prime c^\prime\cos\alpha_1}{\pi}(v_R-v_{UR}-v_{UL}+v_L)\\\nonumber
&&-\frac{\rho^\prime c^\prime\cos\alpha_2}{\pi}(v_R-v_{UR}+v_{UL}-v_L)),
\end{eqnarray}
\end{subequations}
where $\alpha_1=\arcsin{v^\prime/c^\prime}$, $\alpha_2=\arccos{u^\prime/c^\prime}$.

We note that in order to compute numerically nonlinear Euler equaitons the local linearization is done by local averaging at the point $P^\prime$. Thus $\rho^\prime=\avs{\rho_h}$, $u^\prime=\avs{u_h}$, $v^\prime=\avs{v_h}$, $p^\prime=\avs{p_h}$, $c^\prime=\sqrt{\frac{\gamma p^\prime}{\rho^\prime}}$.

\section{Stability}\label{sec_sta}
In order to prove the convergence of the FVEG method we need to show its stability and consistency.
 
\begin{itemize}
\item For linear wave equation system the stability of the FVEG method  has been studied in \cite{LWZ:2006}. By means of the von Neumann analysis and estimate of the amplification matrix it has been shown that under a suitable CFL condition $c\Delta t/h\leq CFL <1$, the FVEG method is stable in the sense of $\|\vU_h\|_{L^\infty}\leq\|\vU_0\|_{L^\infty}$. 
 
\item To show the similar property for nonlinear system of the Euler equations is a nontrivial task. Following our work \cite[Lemma 3.1]{LMY} we may however prove that if a numerical scheme is entropy stable and there exists 
\begin{equation}\label{ass}
\underline{\rho},\overline{E}>0,~\mbox{such that}~0<\underline{\rho}\leq \rho_h,\quad E_h\leq \overline{E},\quad h\to 0,
\end{equation}
then numerical solution $\{\rho_h,\vm_h,E_h\}$ stays uniformly bounded, i.e. 
\begin{equation}\label{Assu}
\exists~C>0,\quad  \|(1/\rho_h, \rho_h, \vu_h, 1/E_h, E_h)\|_{L^\infty(\Omega; \R^{d+2})}\leq C, \quad  h\to 0.
\end{equation}
\end{itemize}

Hence, in what follows we may assume the stability property \eqref{Assu} holds when analysing the Euler problem. We point out that for nonlinear conservation law we need in addition the weak BV estimates generated from the entropy stability of the FVEG method. This question will be studied below.

\subsection{Entropy stability of the FVEG flux}

We proceed to analyze the entropy stability of the semi-discrete version of the FVEG method \eqref{scheme}, i.e.\
\begin{align}\label{scheme-1}
\frac{d}{d t}\vU_K= -\frac{|\sigma| }{|K|}  \sum_{\sigma \in \Sigma(K)} \vF_{\sigma}^{EG}(t)\cdot \vn,
\end{align} 
where $\vF_{\sigma}^{EG}$ is given in \eqref{FEG}. 
In this section we concentrate on the FVEG method in one-space dimension, i.e.
\[
\vF_{\sigma}^{EG} = \vF\left( \vU^{EG}_{\sigma} \right), \quad \sigma = L|R.
\]

We note that the analysis presented here can be directly generalized to a two-dimensional case by applying a direction by direction approach.

Following \cite{LME}, we construct a path in the phase space to connect $\vU_L$ and $\vU_R$ with $(N+1)$-states, i.e.
\[
 \{\vU^{j}_{\sigma}\}_{ j =1}^{N+1}  \quad \mbox{with}\quad \vU^{1}_{\sigma} = \vU_L, \quad \vU^{N+1}_{\sigma} = \vU_R,
 \] 
using the right eigenvectors of the associated linearized Jacobian matrix $\hat{\bm{A}}(\vU_L, \vU_R)$, $\hat{\bm{A}}(\vU, \vU)=\bm{A}(\vU)=\vF_{\vU}(\vU)$:
\[
\left\{\vcr^j_{\sigma},\ \|\vcr^j_{\sigma}\| = 1, \  j=1,\cdots,N  \right\}  \quad \mbox{such that}\quad  \vU^{j+1}_{\sigma} - \vU^{j}_{\sigma} = \alpha^{j}_{\sigma}\vcr^j_{\sigma}.
\]
We denote by $\left\{\vl^j_{\sigma},\ \|\vl^j_{\sigma}\| = 1, \  j=1,\cdots,N \right\}$ the corresponding left eigenvector system, i.e.\ $\braket{\vl_{\sigma}^j, \vcr^k_{\sigma}} = \delta_{jk}$, which yields $\alpha_{\sigma}^j=\braket{\vl_{\sigma}^j, \jump{\vU_h}_{\sigma}}$. 
Then, we connect from $\vU^{j}_{\sigma}$ to $\vU^{j+1}_{\sigma}$ with the sub-path $\vU_{\sigma}^j(\xi), \, \xi \in[-1/2, 1/2]$  through shock wave such that
\begin{align}
F(\vU^j_{\sigma}(\xi))-F(\vU^j_{\sigma})=\lambda^j_{\sigma}(\xi)(\vU^j_{\sigma}(\xi)-\vU^j_{\sigma}),\quad \mbox{with}\quad \vU^j_{\sigma}\left(-\frac{1}{2} \right) = \vU^j_{\sigma}, \quad \vU^j_{\sigma}\left(\frac{1}{2} \right) = \vU^{j+1}_{\sigma}.
\end{align}
Hence, the Rankin-Hugoniot conditions hold for $\xi = 1/2$, i.e.
\[
F(\vU^{j+1}_{\sigma})-F(\vU^j_{\sigma})=\lambda^j_{\sigma}(\vU^{j+1}_{\sigma}-\vU^j_{\sigma}), \quad \mbox{with}\quad \lambda^j_{\sigma}\left( \frac12 \right) = \lambda^j_{\sigma}.
\]
With the above notations, we know from the construction of the FVEG method that 
\begin{equation}\label{FEG-1D}
\vU^{EG}_{\sigma} = \vU_{L}+\sum\limits_{{j:\lambda^j_{\sigma}\leqslant 0}}\alpha^j_{\sigma} \vcr^j_{\sigma},
\quad
\vF^{EG}_{\sigma} = \avs{\vF(\vU_h)} - \frac12 \sum_{j=1}^N q_j \alpha^j_{\sigma} \vl^j_{\sigma},
\quad 
q_j = \abs{\lambda^j_{\sigma}}.
\end{equation}

Next, we study the entropy stability of $\vF^{EG}_{\sigma}$ by means of Tadmor's comparision principle, see \cite{LME}.  
To this end, we recall the explicit form of the entropy-conservative flux \cite{Tadmor-acta}:
\begin{equation}\label{FEC-1D}
\vF^{EC}_{\sigma} = \avs{\vF(\vU_h)} - \frac12 \sum_{j=1}^N q^*_j \alpha^j_{\sigma} \vl^j_{\sigma}
\end{equation}
with
\begin{equation}\label{qEC-1D}
q^*_j=\int^{\frac{1}{2}}_{-\frac{1}{2}}2\xi\braket{\bm{A} (\vU^j_{\sigma}(\xi))\vcr^j_{\sigma},\vcr^j_{\sigma}}\dxi,
\end{equation}
and calculate the difference between $q_j $ and $q^*_j $ stated in the following Lemma.

\begin{Lemma}\label{l31}
Suppose that
 \begin{align} \label{ass-flux}
 |\bar{\vcr}^j_{\sigma}-\vcr^j_{\sigma}|+|\bar{\lambda}^j_{\sigma}-\lambda^j_{\sigma}|\leqslant c|\jump {\vU_j}|^2, \quad \jump {\vU_j} = \vU^{j+1}_{\sigma} - \vU^j_{\sigma},
 \end{align}
 where $\{\bar{\vcr}^k_+\}^N_{k=1}$ is the right orthonormal eigenvectors of $\bar{\bm{A}}^{j+\frac{1}{2}}:=\bm{A}(\vU^j_{\sigma}(0))$, with the corresponding eigenvalues $\{\bar{\lambda}^k_+\}^N_{k=1}$.
 
Then it holds
\begin{align}\label{q-es}
q^*_j\leqslant  q_j +\kappa[\lambda^j_{\sigma}]^+ + \tilde{c}|\jump {\vU_j} |^2, \quad  \kappa\geqslant\frac{1}{4},
\end{align}
where 
\[
[\lambda^j_{\sigma}]^+=\max\{ [\lambda^j_{\sigma}],0\}, \quad [\lambda^j_{\sigma}]:=\lambda^j\big{(}\bm{A}(\vU^{j+1}_{\sigma})\big{)}-\lambda^j\big{(}\bm{A}(\vU^j_{\sigma})\big{)},
\]
and $\tilde{c}>\frac{(c_1)^2c_2}{36} +  \frac1{16} c_2c_1^2 $ with $c_1=\max|\frac{\mathrm{d}\vcr^j}{\mathrm{d}\vU}|$,
 $c_2=\max|\rho({\bm{A}}(\vU))|$, $\rho({\bm{A}}(\vU))$ represents the spectral radius of the Jacobian matrix ${\bm{A}}(\vU)$.
\end{Lemma}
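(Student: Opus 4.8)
### Proof proposal for Lemma \ref{l31}

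The plan is to compare the entropy-conservative quantity $q^*_j$ in \eqref{qEC-1D} with $q_j = |\lambda^j_\sigma|$ by performing the $\xi$-integral against the reference state $\bar{\bm A}^{j+\frac12} = \bm A(\vU^j_\sigma(0))$ and controlling all the resulting error terms. First I would expand $\braket{\bm A(\vU^j_\sigma(\xi))\vcr^j_\sigma,\vcr^j_\sigma}$ around the midpoint state. Writing $\vcr^j_\sigma = \bar{\vcr}^j_\sigma + (\vcr^j_\sigma - \bar{\vcr}^j_\sigma)$ and using that $\bar{\vcr}^j_\sigma$ is a unit eigenvector of $\bar{\bm A}^{j+\frac12}$ with eigenvalue $\bar\lambda^j_\sigma$, one gets $\braket{\bm A(\vU^j_\sigma(0))\vcr^j_\sigma,\vcr^j_\sigma} = \bar\lambda^j_\sigma + O(|\bar{\vcr}^j_\sigma - \vcr^j_\sigma|)$, and hypothesis \eqref{ass-flux} upgrades the remainder to $O(|\jump{\vU_j}|^2)$ after using $|\bar\lambda^j_\sigma - \lambda^j_\sigma|\le c|\jump{\vU_j}|^2$ as well. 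For the $\xi$-dependence away from the midpoint I would Taylor-expand $\bm A(\vU^j_\sigma(\xi))$ in $\xi$; since $\vU^j_\sigma(\xi)$ is a Lipschitz (indeed smooth) sub-path with $\vU^j_\sigma(\tfrac12)-\vU^j_\sigma(-\tfrac12) = \alpha^j_\sigma\vcr^j_\sigma$ and $|\alpha^j_\sigma| \lesssim |\jump{\vU_j}|$, the derivative $\tfrac{d}{d\xi}\bm A(\vU^j_\sigma(\xi))$ is $O(|\jump{\vU_j}|)$ via $c_1 = \max|\tfrac{d\vcr^j}{d\vU}|$ and the boundedness of $D\bm A$. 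Because the weight $2\xi$ is odd on $[-\tfrac12,\tfrac12]$, the leading $O(1)$ contribution of the constant term integrates to zero and the first-order term in $\xi$ survives, contributing $\int_{-1/2}^{1/2} 2\xi^2\,d\xi \cdot (\text{first derivative}) = \tfrac16(\cdots)$, which is where the constant $\tfrac{(c_1)^2 c_2}{36}$ in $\tilde c$ originates, the extra factor $c_2$ coming from bounding a second factor of $\bm A$ or its spectral radius.

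Next I would identify the genuinely $O(1)$ term that cannot be absorbed into the quadratic remainder, namely the one producing $[\lambda^j_\sigma]^+$. The eigenvalue $\lambda^j(\bm A(\vU^j_\sigma(\xi)))$ varies monotonically (up to quadratic corrections) from $\lambda^j(\bm A(\vU^j_\sigma))$ at $\xi=-\tfrac12$ to $\lambda^j(\bm A(\vU^{j+1}_\sigma))$ at $\xi=\tfrac12$, and $[\lambda^j_\sigma] = \lambda^j(\bm A(\vU^{j+1}_\sigma)) - \lambda^j(\bm A(\vU^j_\sigma))$ measures the total spread. Integrating $2\xi$ times this affine-in-$\xi$ profile gives a term of size $\tfrac12[\lambda^j_\sigma]$ plus the midpoint value; combining with the Rankine–Hugoniot speed $\lambda^j_\sigma = \lambda^j_\sigma(\tfrac12)$ and the fact that $q_j = |\lambda^j_\sigma|$, I would bound $q^*_j$ by $|\lambda^j_\sigma|$ plus a nonnegative multiple of $[\lambda^j_\sigma]^+$ (taking the positive part because only an increase of the eigenvalue across the wave can push $q^*_j$ above $q_j$), with the multiplicative constant $\kappa \ge \tfrac14$ arising from the quadrature constants $\int_0^{1/2} 2\xi\,d\xi = \tfrac14$ on the half-interval; the remaining discrepancy is controlled by $\tfrac1{16}c_2 c_1^2|\jump{\vU_j}|^2$, which together with the $\tfrac1{36}(c_1)^2 c_2$ term fixes the stated lower bound for $\tilde c$.

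I would then assemble the three contributions — the midpoint eigenvalue term giving $q_j$, the monotone-spread term giving $\kappa[\lambda^j_\sigma]^+$, and the two quadratic error terms from the $\xi$-expansion and from hypothesis \eqref{ass-flux} — into the inequality \eqref{q-es}, tracking constants carefully enough to justify $\tilde c > \tfrac{(c_1)^2 c_2}{36} + \tfrac1{16}c_2 c_1^2$. The main obstacle I anticipate is the bookkeeping of the odd/even parity of the $2\xi$ weight against the Taylor terms: one has to be careful that the would-be $O(1)$ contributions (which are too large to be quadratic remainders) are exactly the ones killed by oddness or recombined into $[\lambda^j_\sigma]$, while the surviving terms are either the quadrature-weighted eigenvalue spread or genuinely $O(|\jump{\vU_j}|^2)$. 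A secondary technical point is verifying that $|\alpha^j_\sigma| = |\braket{\vl^j_\sigma,\jump{\vU_h}_\sigma}| \lesssim |\jump{\vU_j}|$ uniformly, which follows from $\|\vl^j_\sigma\| = 1$ and the equivalence of the increments $\jump{\vU_j}$ along the path with the global jump under the stability bound \eqref{Assu} (for Euler) or trivially for the wave system; this is needed to make all the $O(\cdot)$ estimates above genuinely quadratic in the single quantity $|\jump{\vU_j}|$.
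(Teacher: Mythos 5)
Your overall strategy --- expand along the sub-path $\vU^j_{\sigma}(\xi)$, use the odd weight $2\xi$ to kill the constant term, extract $[\lambda^j_{\sigma}]^+$ from the eigenvalue spread, and push everything controlled by \eqref{ass-flux} into the quadratic remainder --- is the same as the paper's. But there is a genuine gap in your error budget: you claim that the surviving first-order term $\tfrac16\,\frac{\mathrm{d}}{\mathrm{d}\xi}\braket{\bm{A}(\vU^j_{\sigma}(\xi))\vcr^j_{\sigma},\vcr^j_{\sigma}}\big|_{\xi=0}$ lands in the $\tilde c\,|\jump{\vU_j}|^2$ remainder (you say this is "where the constant $\tfrac{(c_1)^2c_2}{36}$ originates"). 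That cannot work as stated: every $\xi$-derivative along the path is a priori only $O(|\jump{\vU_j}|)$, so this contribution is first order in the jump, a full order too large for the quadratic remainder. Only part of it is the eigenvalue variation that your monotone-spread argument converts into $\kappa[\lambda^j_{\sigma}]^+$; the other part comes from the misalignment between the \emph{fixed} vector $\vcr^j_{\sigma}$ and the eigenvectors of $\bm{A}(\vU^j_{\sigma}(\xi))$ along the path, and your sketch has no mechanism for it. The paper isolates it by the decomposition $\vcr^j_{\sigma}=\vcr^j(\vU^j_{\sigma}(\xi))+\bigl(\vcr^j_{\sigma}-\vcr^j(\vU^j_{\sigma}(\xi))\bigr)$, which splits $q_j^*$ into a pure eigenvalue term $I_1=\int 2\xi\,\lambda^j(\vU^j_{\sigma}(\xi))\dxi\leqslant\tfrac14[\lambda^j_{\sigma}]^+$ (integration by parts against $\tfrac14-\xi^2$ plus a case distinction), a cross term $I_2$ whose leading part is $\tfrac13\lambda^j_{\sigma}\braket{\vcr^j_{\sigma},\tfrac{\mathrm{d}\vcr^j(\vU^j_{\sigma}(0))}{\mathrm{d}\xi}}$, and a genuinely quadratic term $I_3$. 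The cross term is first order in the jump and proportional to $\lambda^j_{\sigma}$; it is absorbed by $q_j=|\lambda^j_{\sigma}|$ through the elementary inequality $\tfrac13 ab\leqslant a+\tfrac{1}{36}ab^2$, and \emph{that} is where $\tfrac{(c_1)^2c_2}{36}$ actually comes from. Your proposal instead attributes the appearance of $q_j$ to "the midpoint eigenvalue term", which integrates to zero against the odd weight and contributes nothing, so in your accounting $q_j$ never legitimately enters the right-hand side.

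Two smaller points. First, integrating $2\xi$ against an affine profile with total spread $[\lambda^j_{\sigma}]$ gives $\tfrac16[\lambda^j_{\sigma}]$, not "$\tfrac12[\lambda^j_{\sigma}]$ plus the midpoint value"; the sharp constant $\tfrac14$ in the lemma comes from $\sup_{\xi}(\tfrac14-\xi^2)=\tfrac14$ after integration by parts, not from $\int_0^{1/2}2\xi\dxi$. Second, the constant $\tfrac{1}{16}c_2c_1^2$ is produced by the quadratic term $I_3\leqslant c_2c_1^2|\jump{\vU_j}|^2\int_{-1/2}^{1/2}2|\xi|^3\dxi$, which your sketch does list correctly as a quadratic remainder. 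Your closing remark on $|\alpha^j_{\sigma}|\lesssim|\jump{\vU_j}|$ is fine and is indeed needed to make the $O(\cdot)$ statements quantitative.
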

\begin{proof}
See Appendix \ref{app-proof}.
\end{proof}

\begin{Remark}
We point out that assumption \eqref{ass-flux} holds for linear systems, since $\bar{\vcr}^j_{\sigma} = \vcr^j_{\sigma},\, \bar{\lambda}^j_{\sigma} = \lambda^j_{\sigma}$ holds for any $j =1,\dots,N$. Moreover, it can be removed if a new FVEG method has a numerical diffusion $q^*_j\leqslant  q_j +\kappa[\lambda^j_{\sigma}]^++ C|\jump{\vU_h}|^2$, we leave the proof to an interested reader.
\end{Remark}

\begin{Remark}
We can extend these results directly to the higher-dimensional case by selecting the numerical flux at the midpoint of the cell boundaries. Specifically, we utilize the one-dimensional numerical flux for both x- and y-directions.
\end{Remark}

Inspired by estimate \eqref{q-es}, we slightly modify our FVEG scheme such that the new version is entropy stable, 
\begin{equation}\label{FEG-1D}
\vF^{EG}_{\sigma} = \avs{\vF(\vU_h)} - \frac12 \sum_{j=1}^N \tilde{q}_j \alpha^j_{\sigma} \vl^j_{\sigma},
\quad 
\tilde{q}_j = \abs{\lambda^j_{\sigma}} + \frac14 [\lambda^j_{\sigma}]^++\tilde{c}|\jump{\vU_j}|^2.
\end{equation}

Lemma \ref{l31} implies that numerical viscosity of the FVEG method with $F_\sigma^{EG}$, cf.\eqref{FEG-1D}, is larger than that of the entropy conservative finite volume method with the numerical flux $F_{\sigma}^{EC}$. Following Tadmor's comparison principle \cite{LME} the FVEG method \eqref{scheme}, \eqref{FEG-1D} is entropy stable.

Indeed, multipling \eqref{scheme-1} with $\omega_h=\omega(\vU_h),~\omega=\bigtriangledown_{\vU}S(\vU)$, we obtain, 
\begin{align}
\omega_K\cdot\frac{d}{dt}\vU_K(t)=\frac{d S(\vU_K)}{dt}=-\omega_K\cdot\frac{|\sigma| }{|K|}  \sum_{\sigma \in \Sigma(K)} \vF_{\sigma}^{EG}(t)\cdot \vn,
\end{align}
resulting to 
\begin{align}
\int_{\Omega}\frac{d S(\vU_h)}{dt}\dx&-
\sum\limits_{K}\omega_K\cdot|K|\cdot\frac{|\sigma| }{|K|}\bigg(\sum_{\sigma \in \Sigma(K)} \left(\vF_{\sigma}^{EG}(t)-\vF_{\sigma}^{EC}(t)\right)\cdot \vn+\sum_{\sigma \in \Sigma(K)} \vF_{\sigma}^{EC}(t)\cdot\vn\bigg)\nonumber \\\nonumber
&=\int_{\Sigma}\left(\vF_{\sigma}^{EG}(t)-\vF_{\sigma}^{EC}(t)\right)\cdot\jump{\omega_h}_\sigma\dsx\\
&\geq \int_{\Sigma}\tilde{C}\jump{\vU_h}_\sigma\jump{\omega_\nu}_\sigma\dsx\geq \underline{C}\int_{\Sigma}\jump{\vU_h}_\sigma^2\dsx,
\end{align}
where $\tilde{C}<0$, $\underline{C}>0$ are constants depending on $\vU_h$.

Entropy stability of the modified FVEG method \eqref{scheme-1}, \eqref{FEG-1D} implies the following weak BV estimate.

\begin{Lemma}[Weak BV estimate]
Let $\vU_h$ be the numerical solution obtained by the FVEG method, under the assumptions \eqref{Assu} and \eqref{ass-flux} hold. Then we have
\begin{align}\label{BVE}
\int^{\tau}_0\int_{\Sigma}\|\jump{\vU_h}_{\sigma}\|^2_2\dS_x\dt
&\leqslant \int^{\tau}_0\int_{\Omega}\frac{d S(\vU_h)}{dt}\dx\dt\leqslant C.
\end{align}
\end{Lemma}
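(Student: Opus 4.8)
The strategy is to integrate in $t$ the pointwise entropy balance already established for the semi-discrete scheme \eqref{scheme-1}, \eqref{FEG-1D}, and then to control the resulting right-hand side by combining the uniform bounds \eqref{Assu} with the concavity of the entropy $S$ and Jensen's inequality applied to the projected initial datum $\vU_h(0)=\Pi_h\vU_0$.

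First I would fix $\tau\in(0,T]$ and integrate over $t\in(0,\tau)$ the chain of inequalities
\[
\int_{\Omega}\frac{\dd S(\vU_h)}{\dd t}\dx\;\geq\;\underline{C}\int_{\Sigma}\|\jump{\vU_h}_\sigma\|_2^2\,\dsx,
\]
which, after dividing by the positive constant $\underline{C}$, gives the first inequality in \eqref{BVE}. By the fundamental theorem of calculus the middle quantity in \eqref{BVE} equals $\int_\Omega S(\vU_h(\tau))\dx-\int_\Omega S(\vU_h(0))\dx$, so it only remains to bound $\int_\Omega S(\vU_h(\tau))\dx$ from above and $\int_\Omega S(\vU_h(0))\dx$ from below, uniformly in $h$ and $\tau$. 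For the upper bound I would invoke \eqref{Assu}: in the wave case $S=-\tfrac12|\vU|^2\le0$ pointwise, whereas in the Euler case $S=C_v\vr\ln(p/\vr^\gamma)$ with $p=(\gamma-1)\big(E-\tfrac12\vr|\vu|^2\big)\le(\gamma-1)E$, so $\vr_h\ge\underline\vr>0$ and $E_h\le\overline E$ give $p_h/\vr_h^\gamma\le C$ and hence $S(\vU_h(\tau))\le C$; integrating over the bounded domain $\Omega$ yields $\int_\Omega S(\vU_h(\tau))\dx\le C$.

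For the lower bound I would use that $S$ is a concave function of the conservative variables (for the wave system this is obvious, for the Euler system it is the classical concavity of $C_v\vr\ln(p/\vr^\gamma)$), together with the fact that on each cell $K$ the value $\vU_h(0)|_K$ is the average $\tfrac{1}{|K|}\int_K\vU_0\dx$ of the admissible, bounded initial datum. Jensen's inequality applied cell by cell and summed over $\mathcal{T}_h$ then gives $\int_\Omega S(\Pi_h\vU_0)\dx\ge\int_\Omega S(\vU_0)\dx\ge-C$, the last bound following from $\vU_0\in L^\infty$ with density and pressure bounded away from zero. Combining the two estimates gives $\int_0^\tau\int_\Omega\frac{\dd S(\vU_h)}{\dd t}\dx\dt\le C$, i.e. the second inequality in \eqref{BVE}.

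The main obstacle is ensuring that all constants are uniform as $h\to0$. On the one hand, $\underline{C}$ in the entropy-production estimate must stay bounded away from zero; this is exactly where \eqref{Assu} and the structural hypothesis \eqref{ass-flux} enter, since they keep the quantities $c_1$, $c_2$, $\tilde c$ of Lemma \ref{l31} under control along the path joining $\vU_L$ and $\vU_R$, so that the modified numerical viscosity $\tilde q_j$ stays above the entropy-conservative one $q^*_j$ with a margin that does not collapse. On the other hand, the entropy of the projected initial data must not blow down, which is precisely what the concavity/Jensen step guarantees. The passage from the one-dimensional flux analysis behind \eqref{FEG-1D} to the two-dimensional Cartesian mesh is handled, as already remarked, by the direction-by-direction splitting, summing the estimate over the two families of edges.
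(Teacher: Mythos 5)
Your argument is correct and follows essentially the same route as the paper, which states the lemma as a direct consequence of the preceding entropy-production inequality for the modified scheme \eqref{scheme-1}, \eqref{FEG-1D} and leaves the time integration and the uniform bound $\le C$ implicit. Your completion of those details (fundamental theorem of calculus, upper bound on $\int_\Omega S(\vU_h(\tau))\dx$ via \eqref{Assu}, lower bound on the projected initial entropy via concavity and Jensen) is consistent with what the paper intends.
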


\begin{Remark}
In the above lemma we have showed that if the FVEG method is entropy stable then the BV estimates \eqref{BVE} hold. We note that entropy stability of the FVEG follows from Lemma \ref{l31} for the case that the cell-interface integral is approximated by the midpoint rule, i.e. one-dimensional entropy stability analysis can be directly applied. In a general case when the Simpson rule for the cell-interface integrals is used the analysis is analogous but more tedious.

Following our recent work \cite{LMY2023}, we conjecture here that the modified FVEG scheme based on the approximate evolution operator and an added numerical diffusion, cf. \eqref{FEG-1D}, is entropy stable also for the Simpson rule approximation of the cell-interface integrals, \eqref{FEG}.
\end{Remark}

\section{Consistency}\label{sec_con}
Having obtained the weak BV estimate, we proceed by showing consistency of the FVEG method \eqref{scheme-1}. 

\subsection{Difference between $U^{EG}_{X}$ and $U_L, U_R$}
We start by showing some useful estimates of  $\| \vU^{EG}_{X}-\vU_K\|,~X \in \{A,B,S\},~K \in \{L,R\}$, where the concrete formulas of $U^{EG}_{X}$ are derived in Section~\ref{sec_EG2-wave} for the wave equation system and Section~\ref{sec_EG2-Euler} for the Euler equations.    
Let us introduce the following notations for the later use
\begin{align*}
&\bullet\quad a \lesssim b \quad \mbox{ if } \quad a \leq c\, b \quad \mbox{ with a generic positive constant c},
\\
&\bullet\quad a \approx b \quad \mbox{ if } \quad a \lesssim b \quad \mbox{ and  } \quad b \lesssim a.
\end{align*}

\begin{Lemma}\label{l41} 
Consider the linear wave equation system. 
Let $U^{EG}_{X},~X \in \{A,B,S\}$ be given by \eqref{EG2-w-S} and \eqref{EG2-w-A}. Then it holds for any $\sigma\in{\Sigma}$, $\sigma = L|R$ that
\begin{equation}
\|\vU_K-\vU_{X}^{EG}\|\lesssim\sum\limits_{\hat{\sigma}\in S(\sigma)}\|\jump{\vU_h}_{\hat{\sigma}}\|, \quad X \in \{A,B,S\}, \ K \in \{L,R\}
\end{equation}
with 
\begin{align}
S(\sigma) = \left\{ \hat{\sigma}\in{\Sigma}| \hat{\sigma} \cap \sigma \neq \emptyset \right\}.  
\end{align}

\end{Lemma}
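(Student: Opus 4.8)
The plan is to read the estimate directly off the explicit representations of the evolved point values in \eqref{EG2-w-S} and \eqref{EG2-w-A}. The only ambiguous ingredient there is the term $\phi_{P'}$, namely the value of $\phi_h^n$ at the interface midpoint $S$ (respectively at a corner $A$ or $B$); on piecewise constant data I will identify it with the mean of the surrounding cell values, i.e.\ $\tfrac12(\phi_L+\phi_R)$ at $S$ and $\tfrac14(\phi_L+\phi_R+\phi_{UL}+\phi_{UR})$ at $A$, and the mirror expression at $B$. With this convention each component of $\vU^{EG}_X$, $X\in\{A,B,S\}$, becomes a fixed linear combination $\vU^{EG}_X=\sum_{K'}\bfC_{X,K'}\vU_{K'}$ over the cells $K'$ appearing in \eqref{EG2-w-S} and \eqref{EG2-w-A} (only $L,R,UL,UR,BL,BR$ occur), with matrix coefficients $\bfC_{X,K'}$ that are absolute constants — only $\tfrac14,\tfrac12,\tfrac1\pi,\tfrac2\pi$ appear — hence independent of $h$.

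Next I will use that the evolution operator reproduces constants: checking this directly on \eqref{EG2-w-S} and \eqref{EG2-w-A} (it is also inherited from the constant-preserving property of the exact evolution operator) gives $\sum_{K'}\bfC_{X,K'}=\I$. Subtracting any reference value $\vU_K$, $K\in\{L,R\}$, then yields
\[
\vU^{EG}_X-\vU_K=\sum_{K'}\bfC_{X,K'}\bigl(\vU_{K'}-\vU_K\bigr),
\]
so that the constant part cancels and only differences of cell averages over cells neighbouring $\sigma$ survive, weighted by $h$-independent bounded coefficients.

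The single geometric point is then to express these cell differences through jumps across faces in $S(\sigma)$. Each difference $\vU_{K'}-\vU_K$ above joins two cells among $\{L,R,UL,UR\}$ (for $X\in\{A,S\}$) or $\{L,R,BL,BR\}$ (for $X=B$). If $K'$ and $K$ share a face $\hat\sigma$, then $\vU_{K'}-\vU_K=\pm\jump{\vU_h}_{\hat\sigma}$; if they are only diagonal neighbours I route through one intermediate cell, e.g.\ $\vU_{UR}-\vU_L=(\vU_{UR}-\vU_R)+(\vU_R-\vU_L)$, so that in every case only the faces $\sigma,\sigma_\pm,\sigma_{L^\pm},\sigma_{R^\pm}$ enter, each of which meets $\sigma$ and hence lies in $S(\sigma)$. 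Since $S(\sigma)$ contains at most an absolute-constant number of faces, the triangle inequality gives $\|\vU_K-\vU^{EG}_X\|\lesssim\sum_{\hat\sigma\in S(\sigma)}\|\jump{\vU_h}_{\hat\sigma}\|$, and the remaining cases $X=B$ and $K=R$ follow from the $A\leftrightarrow B$ and $L\leftrightarrow R$ symmetries of the construction.

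I do not expect a genuine obstacle here: the content is the algebraic rearrangement ``a linear combination of the $\vU_{K'}$ whose coefficient matrices sum to $\I$ equals $\vU_K$ plus a combination of jumps'', together with the bookkeeping that every jump which appears sits on a face of $S(\sigma)$. The only place deserving care is fixing the convention for $\phi_{P'}$ so that the coefficient matrices really sum to $\I$; with the averaged interpretation this is automatic, and the estimate is in any case robust to this choice, since a one-sided value for $\phi_{P'}$ would merely reshuffle which jumps across $\sigma$ itself occur.
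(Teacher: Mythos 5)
Your proposal is correct and takes essentially the same route as the paper's proof: the paper performs exactly this cancellation, component by component and point by point, implicitly using the same averaged convention for $\phi_{P'}$ (note how the coefficient $\tfrac12$ in \eqref{2.15} becomes $\tfrac14$ once $\phi_{P'}=\tfrac14(\phi_L+\phi_R+\phi_{UL}+\phi_{UR})$ is absorbed), then rewrites the result as signed combinations of jumps across the faces of $S(\sigma)$ and applies the triangle inequality. Your observation that the coefficient matrices sum to $\I$ is simply a systematic packaging of the explicit arithmetic the paper carries out, and your telescoping through intermediate cells for diagonal neighbours matches the paper's groupings such as $u_{UR}-u_{UL}$ and $\phi_L-\phi_{UR}$.
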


\begin{proof} 

In what follows we analyze $\| \vU^{EG}_{X} -  U_K\|,~X \in \{A,B,S\},~K = L$ case by case. Note that the case $K=R$ shall be done analogously.   
To this goal, let us specify $S(\sigma)$ as follows
\[
S(\sigma)=\{\sigma,\sigma_+,\sigma_-,\sigma_{L^+},\sigma_{L^-},\sigma_{R^+},\sigma_{R^-}\},
\]
where we have denoted by $\sigma_{L^+}$ and $\sigma_{L^-}$ the edge intersecting of the mesh cell $UL$ and $L$, $L$ and $BL$ respectively. Analogous notation holds for  $\sigma_{R^+}$ and $\sigma_{R^-}$, see Figure~\ref{fig:mesh} for illustration.

For the first component $\phi$,  it follows from \eqref{2.15} that
\begin{eqnarray*}
\phi_L-\phi_A&=&\phi_L-\bigg{(}\frac{1}{4}(\phi_R+\phi_{UR}+\phi_{UL}+\phi_L)-\frac{1}{\pi}(u_R+u_{UR}-u_{UL}-u_{L})
+\frac{1}{\pi}(v_{R}-v_{UR}-v_{UL}+v_{L})\bigg{)}\\\nonumber
&=&\bigg{(}\frac{3}{4}\phi_L-\frac{1}{4}(\phi_R+\phi_{UR}+\phi_{UL})\bigg{)}+\frac{1}{\pi}(u_R-u_L+u_{UR}-u_{UL})
-\frac{1}{\pi}(v_R-v_{UR}+v_L-v_{UL})\\\nonumber
&=&\bigg{(}\frac{1}{2}(\phi_L-\phi_R)+\frac{1}{4}(\phi_L-\phi_{UL})+\frac{1}{4}(\phi_L-\phi_{UR})\bigg{)}+\frac{1}{\pi}(u_R-u_L+u_{UR}-u_{UL})\\\nonumber
&&-\frac{1}{\pi}(v_R-v_{UR}+v_L-v_{UL}),
\end{eqnarray*}
which yields 
\[
\abs{\phi_L-\phi_A} \leq \frac12 \Big(\abs{\jump\phi_\sigma}+\abs{\jump\phi_{\sigma_{L^+}}}+ \abs{\jump\phi_{\sigma_{R^+}}}+\abs{\jump u_{\sigma}}+\abs{\jump u_{\sigma_+}}+\abs{\jump v_{\sigma_{R^+}}}+\abs{\jump v_{\sigma_{L^+}}} \Big).
\]
Same calculations give
\begin{eqnarray*}
\abs{\phi_L-\phi_B} \leq \frac12 \Big( \abs{\jump\phi_\sigma}+\abs{\jump\phi_{\sigma_{L^-}}}+ \abs{\jump\phi_{\sigma_{R^-}}}+ \abs{\jump u_{\sigma}}+\abs{\jump u_{\sigma_-}}+\abs{\jump v_{\sigma_{R^-}}}+\abs{\jump v_{\sigma_{L^-}}} \Big).
\end{eqnarray*}
Further, thanks to the definition of $\phi_S$, i.e.\ \eqref{2.12}, we get
\begin{eqnarray*} 
\phi_L-\phi_S&=&\phi_L-\bigg{(}(\frac{1}{2}(\phi_L+\phi_R)-\frac{2}{\pi}(u_R-u_L)\bigg{)}
=\frac{1}{2}(\phi_L-\phi_R)+\frac{2}{\pi}(u_R-u_L)
\end{eqnarray*}  
implying $\abs{\phi_L-\phi_S} \leq \frac{2}{\pi} \big( \abs{\phi_L-\phi_R} + \abs{u_R-u_L}\big)$. Altogether, we have proved $\| \phi^{EG}_{X} -  \phi_K\| \aleq \sum\limits_{\hat{\sigma}\in S(\sigma)}\|\jump{\vU_h}_{\hat{\sigma}}\|$, where for the vector $\vU_h$ the $L^1-$norm is considered.

In the same manner, we obtain from the definitions of $u_A$ and $u_B$, see \eqref{EG2-w}, that 
\begin{eqnarray*}
u_L-u_A&=&u_L-\bigg{(}-\frac{1}{\pi}(\phi_R+\phi_{UR}-\phi_{UL}-\phi_{L})+\frac{1}{4}(u_R+u_{UR}+u_{UL}+u_L)
+\frac{1}{\pi}(-v_{R}+v_{UR}-v_{UL}+v_L)\bigg{)}\\\nonumber
&=&\frac{1}{\pi}(\phi_R-\phi_{L}+\phi_{UR}-\phi_{UL})+\bigg{(}\frac{1}{2}(u_L-u_R)+\frac{1}{4}(u_{L}-u_{UL})+\frac{1}{4}(u_R-u_{UR})\bigg{)}\\\nonumber
&&-\frac{1}{\pi}(-v_{R}+v_{UR}-v_{UL}+v_L),
\end{eqnarray*}
and
\begin{eqnarray}
u_L-u_S&=&u_L-\bigg{(}-\frac{2}{\pi}(\phi_R-\phi_L)+\frac{1}{2}(u_R+u_L)\bigg{)} = \frac{2}{\pi}(\phi_R-\phi_L) +\frac{1}{2}(u_R- u_L).
\end{eqnarray}
Consequently, we obtain 
\begin{align*}
&  |u_L-u_A|\leq \frac12 \bigg{(}|\jump \phi_{\sigma}|+|\jump \phi_{\sigma_+}| + |\jump u_{\sigma}| + |\jump u_{\sigma_{L^+}}|+ |\jump \phi_{\sigma_{R^+}}|+ |\jump v_{\sigma_{R^+}}| + |\jump v_{\sigma_{L^+}}| \bigg{)},\\
& |u_L-u_S| \leq \frac{2}{\pi} \bigg{(} |\jump\phi_{\sigma}| + |\jump u_{\sigma}|  \bigg{)}.
\end{align*}
Similar estimate as for $\abs{u_L-u_A}$ holds for $\abs{u_L-u_B}$, and we obtain $\| u^{EG}_{X} -  u_K\| \aleq \sum\limits_{\hat{\sigma}\in S(\sigma)}\|\jump{\vU_h}_{\hat{\sigma}}\|$. 

Analyzing  $\| v^{EG}_{X} -  v_K\|$ analogously as $\| u^{EG}_{X} -  u_K\|$ finishes the proof.                                                                                                                                                                                                                                                                                                                                                                                                                                                                                                                                                                                                                                                                                                                                                                                                                                                                                                                                                                                                                                                                                                                
\end{proof}

\medskip

\begin{Lemma}\label{l42} 
Consider the Euler equations. 
Let $U^{EG}_{X}, ~X \in \{A,B,S\}$ be given by \eqref{EG2-E-S} and \eqref{EG2-E-A}. Under the assumption \eqref{Assu}, it holds for any $\sigma\in\Sigma$, $\sigma = L|R$ that
\begin{equation}
\|\vU_K-\vU_{X}^{EG}\|\lesssim\sum\limits_{\hat{\sigma}\in S(\sigma)}\|\jump{\vU_h}_{\hat{\sigma}}\|, \quad X \in \{A,B,S\}, \ K \in \{L,R\}.
\end{equation}
\end{Lemma}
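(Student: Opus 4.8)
The plan is to follow the proof of Lemma~\ref{l41}, the new ingredient being the passage between the primitive variables $\vc{W}=(\rho,u,v,p)$, in which the evolution operator \eqref{EG2-E-S}--\eqref{EG2-E-A} is written, and the conservative variables $\vU=(\rho,\vm,E)$. First I would prove
\[
\|\vc{W}_K-\vc{W}^{EG}_X\|\lesssim\sum_{\hat\sigma\in S(\sigma)}\|\jump{\vc{W}_h}_{\hat\sigma}\|,\qquad X\in\{A,B,S\},\ K\in\{L,R\},
\]
directly from the explicit formulas, and then transfer it to $\vU$ via the uniform bounds \eqref{Assu}. Throughout I would record that, under \eqref{Assu} and the CFL/subsonic setting of Section~\ref{sec_EG2-Euler} (in which $|u'|,|v'|\le c'$ and the stencil for $P=A$ is exactly $\{L,R,UL,UR\}$), the linearization parameters $\rho'=\avs{\rho_h}$, $u'=\avs{u_h}$, $v'=\avs{v_h}$, $c'=\sqrt{\gamma p'/\rho'}$, the angles $\alpha=\alpha_2=\arccos(u'/c')$, $\alpha_1=\arcsin(v'/c')$, and all trigonometric quantities $\sin\alpha_i,\cos\alpha_i,\sin2\alpha_i$ built from them are uniformly bounded, as are $1/\rho'$, $1/c'$ and $1/c'^2$ (the control of $1/c'$ additionally uses that $c'$ stays bounded away from zero, which follows from the positivity of the pressure in the entropy-stable setting). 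Consequently every coefficient on the right-hand sides of \eqref{2.33}--\eqref{2.40} is $O(1)$.

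For $P=S$ the estimate is immediate: by \eqref{2.33}--\eqref{2.36} each component of $\vc{W}^{EG}_S$ equals the corresponding value at $L$ plus a bounded linear combination of $\jump{p}_\sigma,\jump{u}_\sigma,\jump{v}_\sigma$, and $\sigma\in S(\sigma)$ (for $K=R$ one rewrites the same combinations relative to $R$). For $P=A$ I would start from \eqref{2.37}--\eqref{2.40}, subtract the $L$-values, and regroup the ``four-point'' stencil combinations into face jumps lying in $S(\sigma)$: a pattern $(\cdot)_R+(\cdot)_{UR}-(\cdot)_{UL}-(\cdot)_L$ splits as $\big((\cdot)_R-(\cdot)_L\big)+\big((\cdot)_{UR}-(\cdot)_{UL}\big)$, i.e.\ into $\jump{\cdot}_\sigma$ and $\jump{\cdot}_{\sigma_+}$; a pattern $(\cdot)_R-(\cdot)_{UR}-(\cdot)_{UL}+(\cdot)_L$ (and likewise $(\cdot)_R-(\cdot)_{UR}+(\cdot)_{UL}-(\cdot)_L$) splits into $\jump{\cdot}_{\sigma_{R^+}}$ and $\jump{\cdot}_{\sigma_{L^+}}$; and the residual ``lone'' terms, namely $\tfrac1{c'^2}(p_L-p_{UL})$ in \eqref{2.37} and the averages $\tfrac12(\cdot)_{UL}+\tfrac12(\cdot)_L$ in \eqref{2.38}--\eqref{2.39}, produce $\tfrac12\big((\cdot)_L-(\cdot)_{UL}\big)=\pm\tfrac12\jump{\cdot}_{\sigma_{L^+}}$. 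Since the faces $\sigma,\sigma_+,\sigma_{L^+},\sigma_{R^+}$ all belong to $S(\sigma)$, the $O(1)$ coefficients above give $\|\vc{W}_L-\vc{W}^{EG}_A\|\lesssim\sum_{\hat\sigma\in S(\sigma)}\|\jump{\vc{W}_h}_{\hat\sigma}\|$. The case $P=B$ follows from the reflection symmetry relating $B$ to $A$ used in Lemma~\ref{l41}, which only permutes the faces of $S(\sigma)$ among themselves.

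Finally I would pass to the conservative variables. Writing $\vU^{EG}_X=\big(\rho_P,\rho_Pu_P,\rho_Pv_P,\tfrac{p_P}{\gamma-1}+\tfrac12\rho_P(u_P^2+v_P^2)\big)$, the $S$-type estimate already bounds $|\rho_P-\rho_L|$, $|u_P-u_L|$, $|v_P-v_L|$, $|p_P-p_L|$ by quantities that are uniformly bounded (each jump being controlled by \eqref{Assu}), so the predicted values $\rho_P,u_P,v_P,p_P$ lie in a fixed bounded set; then identities such as $\rho_Pu_P-\rho_Lu_L=\rho_P(u_P-u_L)+u_L(\rho_P-\rho_L)$ give $\|\vU_K-\vU^{EG}_X\|\lesssim\|\vc{W}_K-\vc{W}^{EG}_X\|$. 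Conversely, $\jump{u_h}_{\hat\sigma}=m_1^{\rm out}/\rho^{\rm out}-m_1^{\rm in}/\rho^{\rm in}$ and $\jump{p_h}_{\hat\sigma}$ depend Lipschitz-continuously on $(\rho_h,\vm_h,E_h)$ on the range allowed by \eqref{Assu}, whence $\|\jump{\vc{W}_h}_{\hat\sigma}\|\lesssim\|\jump{\vU_h}_{\hat\sigma}\|$; chaining the two estimates yields the claim. I expect the main obstacle to be purely organisational: the $P=A$ regrouping in \eqref{2.37}--\eqref{2.40} is lengthy, and one has to check both that every residual difference is genuinely a jump across a face in $S(\sigma)$ and that the $\alpha_1,\alpha_2$-dependent prefactors — in particular $1/c'$ and $1/c'^2$ — stay uniformly controlled under \eqref{Assu}.
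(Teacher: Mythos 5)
Your proposal is correct and follows essentially the same route as the paper: a case-by-case regrouping of the stencil combinations in \eqref{EG2-E-S}--\eqref{EG2-E-A} into face jumps over $S(\sigma)$, with all prefactors controlled via the uniform bounds \eqref{Assu}. The only difference is one of completeness: the paper estimates only $\rho_L-\rho_A$ and $\rho_L-\rho_S$ in terms of jumps of the primitive variables $p,u,v$ and declares the rest analogous, whereas you additionally spell out the two-sided Lipschitz equivalence between jumps of $(\rho,u,v,p)$ and jumps of the conservative variables $(\rho,\vm,E)$ under \eqref{Assu} (including the lower bound on $c'$), a step the paper leaves implicit.
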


\begin{proof}
The proof can be done case-by-case calculations analogously as the proof of Lemma \ref{l41}. Hence, we show here only the detailed calculations for $\rho$. 

With the definition of $\rho_A$ and $\rho_S$, see \eqref{2.37} and \eqref{2.33}, we obtain 
\begin{eqnarray}
\rho_L-\rho_A&=&\frac{p_L}{c^{{\prime}2}}(p_L-p_{UL})-\frac{\alpha_1}{\pi c^{\prime 2}}(p_R-p_{UR}-p_{UL}+p_L)-\frac{\alpha_2}{\pi c^{\prime 2}}(p_R+p_{UR}-p_{UL}-p_L)
\\\nonumber
&&+\frac{\rho^\prime\sin\alpha_1}{\pi c^\prime}(u_R-u_{UR}+u_{UL}-u_L)+\frac{\rho^\prime\sin\alpha_2}{\pi c^\prime}(u_R+u_{UR}-u_{UL}-u_L)\\\nonumber
&&-\frac{\rho^\prime\cos\alpha_1}{\pi c^\prime}(v_R-v_{UR}-v_{UL}+v_L)-\frac{\rho^\prime\cos\alpha_2}{\pi c^\prime}(v_R-v_{UR}+v_{UL}-v_L)).
\end{eqnarray}
Due to the boundedness of numerical solution we obtain
\begin{equation}
|\rho_L-\rho_A|\leq C\sum\limits_{\hat{\sigma}\in S(\sigma)}({\jump p_{\hat\sigma}}+\jump u_{\hat\sigma}+\jump v_{\hat\sigma}).
\end{equation}
Futher, we have
\begin{equation}
|\rho_L-\rho_S|\leqslant c(\jump p_{\sigma}+\jump u_{\sigma})
\end{equation}
which finishes the proof.
\end{proof}

\subsection{Consistency formulation}

\begin{Theorem}[Consistency formulation]  
Let $\vU_h$ be the numerical solution obtained by the FVEG method \eqref{scheme} for the wave system or \eqref{scheme}, \eqref{FEG-1D} for the Euler equations with $\vU_{0,h}=\projection{\vU_0}$. 
For the Euler equations we further assume that the assumption \eqref{Assu} holds.

Then for all $\tau\in(0,T)$ we have 
\begin{equation}
\bigg{[}\int_{\Omega}\vU_h{\bm{\phi}}\dx\bigg{]}^{t=\tau}_{t=0}=\int^\tau_0\int_{\Omega}(\vU_h\partial_t{\bm\phi}+\vF_h:\nabla_x{\bm\phi})\dx\dt+ \bm{e}_{\vU}(\tau,\bm{\phi}), \quad \vF_h=\vF(\vU_h),
\end{equation}
and the error $\bm{e}_{\vU}(\tau,\bm{\phi})$ is bounded for any $\tau\in(0,T)$ as follows
\begin{equation}
\abs{\bm{e}_{\vU}(\tau,\bm{\phi})} \lesssim h^{1/2}\|\bm{\phi}\|_{C^1([0,T]\times\overline\Omega)}\left(\int^T_0\int_{\Sigma}\|\jump\vU_\sigma\|^2_2{\dS}_x\dt \right)^{1/2}.
\end{equation}
 \end{Theorem}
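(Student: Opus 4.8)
The plan is to test the semi-discrete scheme \eqref{scheme-1} (respectively \eqref{scheme-1}, \eqref{FEG-1D} for the Euler equations) against the piecewise-constant interpolant $\bm\phi_h:=\projection{\bm\phi}$, to reorganise the resulting cell--face double sum into a sum over faces, to extract from it an exact discrete Gauss--Green identity reproducing $\int_\Omega\vF_h:\nabla_x\bm\phi\,\dx$, and to bound everything left over by $h^{1/2}$ times the weak BV quantity. Concretely, I would multiply \eqref{scheme-1} by $\bm\phi_h|_K$, sum over $K\in\mathcal{T}_h$, integrate over $(0,\tau)$, and use the product rule in $t$. Since $\vU_h$ is piecewise constant and $\projection{\cdot}$ is the $L^2$-projection onto $\mathcal{Q}_h$, one has $\int_\Omega\vU_h\,\bm\phi_h\,\dx=\int_\Omega\vU_h\,\bm\phi\,\dx$ and $\int_\Omega\vU_h\,\partial_t\bm\phi_h\,\dx=\int_\Omega\vU_h\,\partial_t\bm\phi\,\dx$ \emph{exactly}, so the boundary term $[\int_\Omega\vU_h\bm\phi\,\dx]^{t=\tau}_{t=0}$ and $\int_0^\tau\!\int_\Omega\vU_h\,\partial_t\bm\phi\,\dx\dt$ appear with no error, and all of $\bm e_\vU$ sits in the flux contribution
\[
\bm e_\vU(\tau,\bm\phi)=\int_0^\tau\sum_{K}\sum_{\sigma\in\Sigma(K)}|\sigma|\,(\vF^{EG}_\sigma\cdot\vn_{K,\sigma})\cdot\bm\phi_h|_K\,\dt-\int_0^\tau\!\int_\Omega\vF_h:\nabla_x\bm\phi\,\dx\dt .
\]

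Next I would reorganise by faces: an interior face $\sigma=L|R$ gets the contribution $|\sigma|(\vF^{EG}_\sigma\cdot\vn_\sigma)\cdot\jump{\bm\phi_h}_\sigma$, and I split $\vF^{EG}_\sigma=\avs{\vF(\vU_h)}_\sigma+D_\sigma$ (in the Euler case $D_\sigma$ also collects the added diffusion $\tfrac14[\lambda^j_\sigma]^++\tilde c|\jump{\vU_j}|^2$ from \eqref{FEG-1D}). For the centred piece, the elementary identity $\avs{a}\jump{b}+\jump{a}\avs{b}=\jump{ab}$ together with $\sum_{\sigma\in\Sigma(K)}|\sigma|\vn_{K,\sigma}=\vc{0}$ and periodicity gives the exact discrete Gauss--Green relation $\sum_\sigma|\sigma|(\avs{\vF(\vU_h)}_\sigma\cdot\vn_\sigma)\cdot\jump{\bm\phi_h}_\sigma=-\sum_\sigma|\sigma|(\jump{\vF(\vU_h)}_\sigma\cdot\vn_\sigma)\cdot\avs{\bm\phi_h}_\sigma$, while the Gauss theorem on each cell gives $\int_\Omega\vF_h:\nabla_x\bm\phi\,\dx=-\sum_\sigma|\sigma|(\jump{\vF(\vU_h)}_\sigma\cdot\vn_\sigma)\cdot\bm\phi_\sigma$ with the face average $\bm\phi_\sigma=\frac1{|\sigma|}\int_\sigma\bm\phi\,\dS_x$. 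Hence
\[
\bm e_\vU(\tau,\bm\phi)=\int_0^\tau\Big[\sum_\sigma|\sigma|(D_\sigma\cdot\vn_\sigma)\cdot\jump{\bm\phi_h}_\sigma-\sum_\sigma|\sigma|(\jump{\vF(\vU_h)}_\sigma\cdot\vn_\sigma)\cdot\big(\avs{\bm\phi_h}_\sigma-\bm\phi_\sigma\big)\Big]\dt .
\]

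I would then estimate the two face sums using four facts: $|\jump{\bm\phi_h}_\sigma|+|\avs{\bm\phi_h}_\sigma-\bm\phi_\sigma|\lesssim h\,\|\bm\phi\|_{C^1}$ (all quantities approximate $\bm\phi$ at the face centre, at distance $\lesssim h$); $|\jump{\vF(\vU_h)}_\sigma|\lesssim|\jump{\vU_h}_\sigma|$ (linearity for the wave system, Lipschitz continuity of $\vF$ on the set fixed by \eqref{Assu} for Euler); $\|D_\sigma\|\lesssim\sum_{\hat\sigma\in S(\sigma)}\|\jump{\vU_h}_{\hat\sigma}\|$, obtained by writing $D_\sigma$ as the Simpson convex combination of $\vF(\vU^{EG}_X)-\avs{\vF(\vU_h)}_\sigma$, $X\in\{A,S,B\}$, and applying Lemma~\ref{l41} resp.\ Lemma~\ref{l42} node by node (the Euler diffusion term being itself $\lesssim|\jump{\vU_h}_\sigma|$); and the mesh facts $|\sigma|\approx h$, $\#\{\sigma:\hat\sigma\in S(\sigma)\}\lesssim1$, $\sum_\sigma|\sigma|=|\Sigma|\approx h^{-1}$. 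After reindexing the $S(\sigma)$-sum and using the Cauchy--Schwarz inequality over $\sigma$, this yields, pointwise in $t$,
\[
\Big|\,\cdots\,\Big|\ \lesssim\ h\,\|\bm\phi\|_{C^1}\sum_\sigma|\sigma|\,\|\jump{\vU_h}_\sigma\|\ \lesssim\ h\cdot h^{-1/2}\,\|\bm\phi\|_{C^1}\Big(\int_\Sigma\|\jump{\vU_h}\|^2_2\,\dS_x\Big)^{1/2},
\]
and one more Cauchy--Schwarz in $t$ over $(0,\tau)\subset(0,T)$ gives the asserted bound, finite by the weak BV estimate \eqref{BVE}.

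The only genuinely delicate structural point is the estimate $\|D_\sigma\|\lesssim\sum_{\hat\sigma\in S(\sigma)}\|\jump{\vU_h}_{\hat\sigma}\|$: this is where the truly multidimensional evolution operator enters, and it is precisely the content of Lemmas~\ref{l41} and \ref{l42}; the Simpson rule in \eqref{FEG} only requires invoking those Lemmas at the three quadrature nodes separately. The remaining quantitative subtlety is the bookkeeping exponent: the one power of $h$ furnished by the $C^1$-regularity of $\bm\phi$ is ``half spent'' by Cauchy--Schwarz against $(\sum_\sigma|\sigma|)^{1/2}\approx h^{-1/2}$, which is exactly what leaves $h^{1/2}$ in front of the bounded weak-BV term. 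Finally, for the fully discrete scheme \eqref{scheme} one has in addition the midpoint-in-time and Simpson-in-space quadrature errors; under the CFL scaling these are of order $h$ and are absorbed into $\bm e_\vU$ without altering the stated estimate.
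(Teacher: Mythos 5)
Your proposal is correct and follows essentially the same route as the paper: test the semi-discrete scheme against $\Pi_h[\bm{\phi}]$, reorganise into face sums, use the identity $\jump{ab}=\avs{a}\jump{b}+\jump{a}\avs{b}$ together with periodicity to recover $\int_{\Omega}\vF_h:\nabla_x\bm{\phi}\dx$ up to two face residuals, bound the test-function factors by $h\norm{\bm{\phi}}_{C^1}$ and the flux factors via Lipschitz continuity and Lemmas~\ref{l41}--\ref{l42}, and finish with Cauchy--Schwarz against $\int_{\Sigma}1\dS_x\approx h^{-1}$ and the weak BV estimate. Your explicit treatment of the midpoint/Simpson quadrature errors for the fully discrete scheme is a point the paper's proof, which works with the semi-discrete form \eqref{scheme-1}, leaves implicit.
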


\begin{proof}
Realizing that
\begin{equation}
\jump {ab}=\avs {a}\jump b+\jump a\avs b,
\end{equation}
we can rewrite the convection term   $\int_{\Omega}\vF_h:\nabla_x\bm \phi\dx$ as follows
\begin{eqnarray}
\int_{\Omega}\vF_h:\nabla_x\bm \phi\dx&=&\sum\limits_{K\in \mathcal{T}_h}\int_{K}\vF_h:\nabla_x\bm \phi\dx=\sum\limits_{K\in \mathcal{T}_h}\int_{\partial K}\vF_h\cdot\bm \phi\cdot\bm n_K\dx=-\int_{\Sigma}\jump {\vF_h} \cdot\bm {\phi}\cdot \bm {n}{\dS}_x\\\nonumber
&=&-\int_{\Sigma}{(\jump {\vF_h}\cdot(\bm \phi-\avs{\Pi_h[\bm \phi]})+\jump {\vF_h}\cdot\avs{\Pi_h[\bm \phi]})\cdot\bm n}\dS_x\\\nonumber
&=&-\int_{\Sigma}{(\jump {\vF_h}\cdot(\bm \phi-\avs{\Pi_h[\bm \phi]})-\jump {\Pi_h[\bm \phi]}\cdot\avs{\vF_h}+\jump {\vF_h\cdot\Pi_h[\bm\phi]})\cdot\bm n}\dS_x\\\nonumber
&=&\int_{\Sigma}\vF^{EG}_\sigma\cdot\bm n\cdot \jump {\Pi_h[\bm \phi]}\dS_x-\int_{\Sigma}\jump {\vF_h}\cdot n\cdot(\bm {\phi}-\avs{\Pi_h[\bm \phi]})\dS_x\\\nonumber
&&+\int_{\Sigma}(\avs{\vF_h}-\vF^{EG}_\sigma)\cdot n\cdot\jump {\Pi_h[\bm \phi]}\dS_x.
\end{eqnarray}
Note that for the last equality we have used the Gauss theorem and the periodic boundary conditions, i.e.
$$\int_{\Sigma}\jump{ \vF_h\cdot\Pi_h[\bm\phi]} \cdot\bm n\dS_x=0.$$
Hence, let us denote the error terms as follows
\begin{align*}
e_1(t,\bm{\phi})&=\int_{\Sigma}\jump {\vF_h(t)}\cdot n\cdot(\bm\phi-\avs{\Pi_h[\bm \phi]})\dS_x, \\
e_2(t,\bm{\phi})&=-\int_{\Sigma}(\avs{\vF_h(t)}-\vF^{EG}_\sigma(t))\cdot n\cdot\jump {\Pi_h[\bm \phi]}\dS_x.
\end{align*}
Thanks to the Lipschitz-continuity of $\vF$, i.e. $\|\jump{\vF_h}\cdot \vn\|\lesssim\|\jump{\vU_h}\|$, and due to the projection error estimate $\|\bm\phi-\avs{\Pi_h[\bm \phi]}\|\lesssim h\|\bm \phi\|_{C^1(\overline\Omega)}$ for all $x\in\sigma\in\Sigma$, we shall control $e_1$:
\begin{eqnarray}
|e_1(t,\bm{\phi})|&\lesssim &h\|\bm\phi\|_{C([0,T]\times\overline\Omega)}\int_{\Sigma}\| \jump{\vF_h(t)}\cdot \bm n\|\dS_x\lesssim h\|\phi\|_{C([0,T]\times\overline\Omega)}\int_{\Sigma}\|\jump{\vU_h(t)}\|\dS_x\\\nonumber
&\lesssim& h\|\bm\phi\|_{C([0,T]\times\overline\Omega)}\left(\int_{\Sigma}\|\jump{\vU_h(t)}\|^2\dS_x \right)^{1/2}\left(\int_{\Sigma}1\dS_x\right)^{1/2}\\\nonumber
&\lesssim& h^{1/2}\|\bm\phi\|_{C([0,T]\times\overline\Omega)}\left(\int_{\Sigma}\|\jump{\vU_h(t)}\|^2\dS_x\right)^{1/2}.
\end{eqnarray}
On the other hand, due to the projection error estimate   $\jump {{\Pi}_h[\bm \phi]}\lesssim h\|\bm \phi\|_{C^1(\overline\Omega)}$, we obtain
\begin{eqnarray}
|e_2(t,\bm{\phi})|&\lesssim& h\|\bm \phi\|_{C^1([0,T]\times\overline\Omega)}\int_{\Sigma}\|(\avs{\vF_h(t)}-\vF^{EG}_\sigma(t))\cdot n\|\dS_x\\\nonumber
&\lesssim& h\|\bm \phi\|_{C^1([0,T]\times\overline\Omega)}\sum\limits_{\sigma:=L|R\in\Sigma}\int_{\sigma}(\|(\vF(\vU_L(t))-\vF(\vU^{EG}_X(t))\cdot n\|+\|(\|\vF(\vU_R(t))-\vF(\vU^{EG}_X(t))\cdot n\|)\dS_x\\\nonumber
&\lesssim& h\|\bm \phi\|_{C^1([0,T]\times\overline\Omega)}\sum\limits_{\sigma:=L|R\in\Sigma}\int_{\sigma}(\|\vU_L(t)-\vU^{EG}_\sigma(t)\|+\|\vU_R(t)-\vU^{EG}_\sigma(t)\|)\dS_x\\\nonumber
&\lesssim&h\|\bm \phi\|_{C^1([0,T]\times\overline\Omega)}\int_{\Sigma}\|\jump{ \vU_h(t)}\|\dS_x\lesssim h^{1/2}\|\bm \phi\|_{C^1([0,T]\times\overline\Omega)} \left( \int_{\Sigma}\|\jump {\vU_h(t)}\|^2\dS_x \right)^{1/2},
\end{eqnarray}
where we have applied Lemma \ref{l31} and Lemma \ref{l42}.

Finally, we have proved the consistency of the FVEG method, i.e.
\begin{eqnarray*}
\left[\int_{\Omega}\vU_h{\bm{\phi}}\dx \right]^{t=\tau}_{t=0}&=&\int^\tau_0\int_\Omega\frac{d}{\dt}(\vU_n\cdot\bm\phi)\dx=\int^\tau_0\int_\Omega(\vU_h\cdot\partial_t\bm \phi+\bm\phi\cdot\frac{d}{\dt}\vU_h)\dx\dt\\
&=&\int^\tau_0\int_\Omega\vU_h\cdot\partial_t\bm\phi\dx\dt+\int^\tau_0\int_{\Sigma}\vF(\vU^{EG}_\sigma)\cdot \bm n\jump\phi\dS_x\\
&=&\int^\tau_0\int_\Omega(\vU_h\cdot\partial_t\bm\phi+\vF_h:\nabla_x\phi)\dx\dt+e_{\vU}(\tau,\bm{\phi}),
\end{eqnarray*}
and $e_{\vU}(\tau,\bm{\phi}) = \int^\tau_0(e_1+e_2)(t,\bm\phi)\dt$.
\end{proof}

\section{Convergence}
The aim of this section is to prove that the numerical solution $\{\vU_h\}_{h\searrow 0}$ generated by the FVEG method \eqref{scheme}, \eqref{FEG-1D} converges as $h\to 0$. Due to the lack of compactness of $\{\vU_h\}_{h\searrow 0}$ for the Euler equations, we can only show the convergence to  a generalized, the so-called, dissipative weak solution, which we will define in what follows.

As we will show later, in the case of linear wave equation system, our results directly imply the convergence of the FVEG method to a weak solution.

\begin{Definition}\label{d4.1}
{\bf{(Dissipative weak (DW) solution)}} Let the initial data for the Euler equations satisfy
$$
\begin{aligned}
	& \varrho_0 \in L^\gamma\left(\Omega\right), \boldsymbol{m}_0 \in L^{\frac{2 \gamma}{\gamma+1}}\left(\Omega ; \mathbb{R}^d\right), S_0 \in L^\gamma\left(\Omega\right), \\
	& E_0=E\left(\varrho_0, \boldsymbol{m}_0, S_0\right) \text { and } \int_{\Omega} E\left(\varrho_0, \boldsymbol{m}_0, S_0\right) \dx<\infty .
\end{aligned}
$$

We say that $(\varrho, \boldsymbol{m}, S)$ is a dissipative weak solution to the Euler equations in $[0, T) \times\Omega$, $0<T \leq \infty$, if the following holds:
\begin{itemize}
	\item {\bf{Regularity.}} The solution $(\varrho, \boldsymbol{m}, S)$ belongs to the class 
	$$
	\begin{aligned}
		& \varrho \in C_{\text {weak,loc }}\left([0, T) ; L^\gamma\left(\Omega\right)\right), \boldsymbol{m} \in C_{\text {weak,loc }}\left([0, T) ; L^{\frac{2 \gamma}{\gamma+1}}\left(\Omega; \mathbb{R}^d\right)\right), \\
		& S \in L^{\infty}\left(0, T ; L^\gamma\left(\Omega\right)\right) \cap B V_{\text {weak }}\left([0, T) ; L^\gamma\left(\Omega\right)\right) ,\\
		& \quad \int_{\Omega} E(\varrho, \boldsymbol{m}, S)(t, \cdot) \mathrm{d} x \leq \int_{\Omega} E\left(\varrho_0, \boldsymbol{m}_0, S_0\right)\dx\text { for any } 0 \leq t<T .
	\end{aligned}
	$$
	\item  {\bf{Equation of continuity.}} The integral identity
	$$
	\int_0^T \int_{\Omega}\left[\varrho \partial_t \varphi+\boldsymbol{m} \cdot \nabla_x \varphi\right] \mathrm{d} x \mathrm{~d} t=-\int_{\Omega} \varrho_0 \varphi(0, \cdot) \mathrm{d} x,
	$$
	holds for any $\varphi \in C^1_c\left([0, T) \times\Omega\right)$.
	\item  {\bf{Momentum equation.}} The integral identity
	$$
	\begin{gathered}
		\int_0^T \int_{\Omega}\left[\boldsymbol{m} \cdot \partial_t \varphi+\mathbb{1}_{\varrho>0} \frac{\boldsymbol{m} \otimes \boldsymbol{m}}{\varrho}: \nabla_x \varphi+p(\varrho, S) \mathrm{div}_x \varphi\right] \mathrm{d} x \mathrm{d} t \\
		\quad=-\int_0^T \int_{\Omega} \nabla_x \varphi: \mathrm{d} \Re(t) \mathrm{d} x-\int_{\Omega} \boldsymbol{m}_0 \cdot \varphi(0, \cdot) \mathrm{d} x
	\end{gathered}
	$$
	holds for any $\varphi \in C^1_c\left([0, T) \times \Omega; \mathbb{R}^d\right)$, where the Reynolds stress defect  reads as
	$$
	\Re \in L^{\infty}\left(0, T ; \mathcal{M}^{+}\left(\Omega ; \mathbb{R}_{\mathrm{sym}}^{d \times d}\right)\right) .
	$$
	\item {\bf{Entropy inequality.}}
	$$
	\begin{array}{r}
		{\left[\int_{\Omega} S \varphi \mathrm{d} x\right]_{t=\tau_1-}^{t=\tau_2+} \geq \int_{\tau_1}^{\tau_2} \int_{\Omega}\left[S \partial_t \varphi+\left\langle\mathcal{V}_{t, x} ; 1_{\tilde{\varrho}>0}(\tilde{S} \tilde{\boldsymbol{u}})\right\rangle \cdot \nabla_x \varphi\right] \mathrm{d} x \mathrm{d} t,} \\
		S(0-, \cdot)=S_0,
	\end{array}
	$$
	for any $0 \leq \tau_1 \leq \tau_2<T$, any $\varphi \in C^1\left([0, T) \times \Omega\right), \varphi \geq 0$, where $\left\{\mathcal{V}_{t, x}\right\}_{(t, x) \in(0, T) \times\Omega}$ is a parametrized probability (Young) measure,
	$$
	\begin{gathered}
		\left.\mathcal{V}_{t, x} \in L^{\infty}\left((0, T) \times \Omega\right) ; \mathcal{P}\left(\mathbb{R}^{d+2}\right)\right), \mathbb{R}^{d+2}=\left\{\tilde{\varrho} \in \mathbb{R}, \widetilde{\boldsymbol{m}} \in \mathbb{R}^d, \widetilde{S} \in \mathbb{R}\right\},\\
		\langle\mathcal{V} ; \tilde{\varrho}\rangle=\varrho,\langle\mathcal{V} ; \widetilde{\boldsymbol{m}}\rangle=\boldsymbol{m},\langle\mathcal{V} ; \widetilde{S}\rangle=S .
	\end{gathered}
	$$
	\item {\bf{Compatibility of the energy and Reynolds stress defects.}} There exists a nonincreasing function $\mathcal{E}:[0, T) \rightarrow[0, \infty)$ satisfying
	$$
	\begin{aligned}
		& \mathcal{E}(0-)=\int_{\Omega} E\left(\varrho_0, \boldsymbol{m}_0, S_0\right) \mathrm{d} x, \\
		& \mathcal{E}(\tau+)=\int_{\Omega} E(\varrho, \boldsymbol{m}, S)(\tau, \cdot) \mathrm{d} x+\mathfrak{F}, \quad \text { for any } 0 \leq \tau<T,
	\end{aligned}
	$$
	where $\mathfrak{F} \in L^{\infty}\left(0, T ; \mathcal{M}^{+}\left(\Omega\right)\right)$ is the energy defect satisfying
	$$
	\min \{2, d(\gamma-1)\} \mathfrak{F} \leq \operatorname{trace}[\mathfrak{R}] \leq \max \{2, d(\gamma-1)\} \mathfrak{F}.
	$$
\end{itemize}

\end{Definition}

Following \cite[Proposition 7.2]{FLMS}, we obtain the following results on weak and strong convergence of the FVEG method.

\begin{Theorem}\label{weakC}
{\bf{(Weak convergence) }}Let the initial data $\left\{\varrho_{0, h}, \boldsymbol{m}_{0, h}, E_{0, h}\right\}_{h\searrow 0}$ satisfy
$$
\varrho_{0, h} \geq \underline{\varrho}>0, \quad E_{0, h}-\frac{1}{2} \frac{\left|\boldsymbol{m}_{0, h}\right|^2}{\varrho_{0, h}}>0, \quad h\to 0.
$$

We note that a suitable choice of the discrete initial data is  $\rho_{h,0}=\projection{\rho_0}$, $m_{h,0}=\projection{m_0}$ and $E_{h,0}=\projection{E_0}$. Let $\left\{\varrho_h, \boldsymbol{m}_h, S_h\right\}$ be a solution obtained by the FVEG method. Here $S_h$ is computed from $\rho_h$ and $E_h$. Further, suppose that assumption \eqref{Assu} holds.
Then up to a subsequence, as the case may be, $\left\{\varrho_h, \boldsymbol{m}_h, S_h\right\}$ generates a DW solution $(\varrho, \boldsymbol{m}, S)$ in the sense of Definition \ref{d4.1}
$$
\left(\varrho_h, \boldsymbol{m}_h, S_h\right) \rightarrow(\varrho, \boldsymbol{m}, S) \text { weakly-(*) in } L^{\infty}\left((0, T) \times\Omega; \mathbb{R}^{d+2}\right), \quad \text { as } h \rightarrow 0 .
$$

Moreover,
$$
E\left(\varrho_h, \boldsymbol{m}_h, S_h\right) \rightarrow\left\langle\mathcal{V}_{t, x} ; E(\tilde{\varrho}, \widetilde{\boldsymbol{m}}, \widetilde{S})\right\rangle \text { weakly-(*)~in } L^{\infty}\left((0, T) \times\Omega\right), h \rightarrow 0.
$$
\end{Theorem}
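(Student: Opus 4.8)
The plan is to follow the generalized Lax equivalence strategy of \cite[Proposition~7.2]{FLMS}: the uniform bound \eqref{Assu}, the weak BV estimate \eqref{BVE} and the consistency formulation of Section~\ref{sec_con} are exactly the ingredients that permit passage to the limit in the scheme. First I would use \eqref{Assu} to conclude that $\{(\varrho_h,\vm_h,S_h)\}_{h\searrow 0}$ is bounded in $L^\infty((0,T)\times\Omega;\R^{d+2})$, so that, along a subsequence, it converges weakly-(*) to a triple $(\varrho,\vm,S)$ and generates a parametrized (Young) measure $\{\mathcal{V}_{t,x}\}_{(t,x)\in(0,T)\times\Omega}$ with first moments $\varrho$, $\vm$, $S$. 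Since $\varrho_h\ge\underline\varrho>0$, the limit satisfies $\varrho\ge\underline\varrho$ and $\mathbb{1}_{\tilde\varrho>0}=1$ on $\supp\mathcal{V}_{t,x}$, and for every continuous $g$ with growth compatible with \eqref{Assu} one has $g(\varrho_h,\vm_h,S_h)\rightharpoonup^*\langle\mathcal{V};g\rangle$ weakly-(*) in $L^\infty$; this will be applied to the convective term $\vm_h\otimes\vm_h/\varrho_h$, the pressure $p(\varrho_h,S_h)$, the total energy $E(\varrho_h,\vm_h,S_h)$ and the entropy flux $S_h\vm_h/\varrho_h$.

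Next I would substitute $\vU_h=(\varrho_h,\vm_h,E_h)$ into the consistency formulation. Its error is bounded by $h^{1/2}\|\bm\phi\|_{C^1([0,T]\times\overline\Omega)}\big(\int_0^T\!\int_\Sigma\|\jump{\vU_\sigma}\|_2^2\big)^{1/2}$, and by the weak BV estimate \eqref{BVE} the integral factor is uniformly bounded, so the error tends to $0$ as $h\to 0$. Passing to the limit thus yields the weak formulation of the equation of continuity and of the momentum equation; the latter carries a defect equal to the weak-(*) limit of $\vm_h\otimes\vm_h/\varrho_h+p(\varrho_h,S_h)\I$ minus $\mathbb{1}_{\varrho>0}\vm\otimes\vm/\varrho+p(\varrho,S)\I$. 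By Jensen's inequality for the convex maps $(\varrho,\vm)\mapsto\vm\otimes\vm/\varrho$ and $(\varrho,S)\mapsto p(\varrho,S)$ this defect is a nonnegative symmetric-matrix-valued measure $\mathfrak{R}\in L^\infty(0,T;\mathcal{M}^+(\Omega;\R^{d\times d}_{\mathrm{sym}}))$, and weak-in-time continuity of $\varrho$ and $\vm$ follows from the limit equations together with the uniform bounds.

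For the entropy inequality I would start from the entropy stability of the modified FVEG flux \eqref{FEG-1D}, established in Section~\ref{sec_sta} through Lemma~\ref{l31}: it gives a discrete renormalized entropy inequality whose consistency formulation is treated exactly as above, and letting $h\to 0$ delivers the entropy inequality of Definition~\ref{d4.1} with flux $\langle\mathcal{V}_{t,x};\mathbb{1}_{\tilde\varrho>0}(\widetilde S\,\widetilde{\vu})\rangle$, the condition $S(0-,\cdot)=S_0$ coming from $\vU_{0,h}=\projection{\vU_0}$, and $S\in\BV_{\mathrm{weak}}([0,T);L^\gamma(\Omega))$ from the control of the time increments of $\int_\Omega S_h\varphi\,\dx$. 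The energy stability $\int_\Omega E(\varrho_h,\vm_h,S_h)(\tau)\,\dx\le\int_\Omega E(\varrho_{0,h},\vm_{0,h},S_{0,h})\,\dx$, a further consequence of entropy stability, passes to the limit after extracting a subsequence along which $\tau\mapsto\int_\Omega E(\varrho_h,\vm_h,S_h)(\tau)\,\dx$ converges a.e.\ to a nonincreasing function $\mathcal{E}$; setting $\mathfrak{F}:=\mathcal{E}(\tau+)-\int_\Omega\langle\mathcal{V};E\rangle(\tau,\cdot)\,\dx\ge 0$ and using the pointwise algebraic relation between the kinetic and internal parts of $E$ and the quantities $\vm\otimes\vm/\varrho$, $p(\varrho,S)$ gives the compatibility bound $\min\{2,d(\gamma-1)\}\mathfrak{F}\le\operatorname{trace}[\mathfrak{R}]\le\max\{2,d(\gamma-1)\}\mathfrak{F}$, exactly as in \cite{FLMS}. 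The final claim $E(\varrho_h,\vm_h,S_h)\rightharpoonup^*\langle\mathcal{V};E\rangle$ is the first-moment statement of the first step applied to $g=E$.

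The hard part is not the abstract limit passage but the scheme-dependent input: one needs a genuine discrete entropy inequality — and the attendant weak BV bound \eqref{BVE} — for the \emph{full two-dimensional} FVEG method with the Simpson interface quadrature \eqref{FEG}. Lemma~\ref{l31} and the weak BV estimate are proved for the midpoint-rule flux and, after a direction-by-direction argument, for its tensor-product two-dimensional analogue, whereas for the Simpson rule entropy stability is so far only conjectured (cf.\ the Remark following the weak BV estimate). Making this rigorous — tracking the extra numerical diffusion through the corner-point values $\vU^{EG}_A$, $\vU^{EG}_B$ and, for the Euler system, through the local linearization of the fluxes — is the delicate step; once it is in place, the measure-theoretic bookkeeping for $\mathfrak{R}$, $\mathfrak{F}$ and the growth conditions needed for Young-measure convergence are routine and follow \cite{FLMS,LMY}.
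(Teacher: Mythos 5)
Your proposal follows exactly the route the paper takes: the paper proves this theorem by invoking the generalized Lax equivalence principle of \cite[Proposition 7.2]{FLMS}, with the uniform bounds \eqref{Assu}, the weak BV estimate \eqref{BVE} and the consistency formulation as the three inputs, and your write-up is a correct unpacking of what that citation delivers (including the observation that \eqref{Assu} makes the concentration defects vanish). Your closing caveat about entropy stability for the full two-dimensional Simpson-rule flux being only conjectured is also consistent with the paper's own Remark, so there is nothing to correct.
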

We note that due to \eqref{Assu} the Reynolds stress concentration defect and energy concentration defect vanish.

\begin{Theorem}\label{strogC}
{\bf{ (Strong convergence)}}
 Let the sequence $\left\{\varrho_h, m_h, S_h\right\}_{h\searrow 0}$ be a solution obtained by the FVEG method. Let the assumptions of Theorem \ref{weakC} hold. Then up to a subsequence $\left\{\varrho_h, m_h, S_h\right\}_{h\searrow 0}$ converges strongly to a DW solution $(\varrho, \boldsymbol{m}, S)$ in the following sense.
\begin{itemize}
	\item {\bf{Strong convergences of Cesàro averages}}
\end{itemize}
$$
\begin{aligned}
	& \frac{1}{N} \sum_{k=1}^N\left(\varrho_{h_k}, \boldsymbol{m}_{h_k}, S_{h_k}\right) \rightarrow(\varrho, \boldsymbol{m}, S),\quad
 \text { as } N \rightarrow \infty \text { in } L^q\left((0, T) \times\Omega ; \mathbb{R}^{d+2}\right) \text { for any } 1 \leq q<\infty \text {, } \\
	& \frac{1}{N} \sum_{k=1}^N E\left(\varrho_{h_k}, \boldsymbol{m}_{h_k}, S_{h_k}\right) \rightarrow\left\langle\mathcal{V}_{t, x}, E(\tilde{\varrho}, \widetilde{\boldsymbol{m}}, \widetilde{S})\right\rangle, \quad
	\text { as } N \rightarrow \infty \text { in } L^q\left((0, T) \times\Omega\right) \text { for any } 1 \leq q<\infty.\\
	&
\end{aligned}
$$

\begin{itemize}
\item {\bf{Weak solution}}
If $(\rho, \boldsymbol{m}, S)$ is a weak entropy solution of the Euler equations with initial data $\left(\rho_0, \boldsymbol{m}_0, S_0\right)$, then
$$
\mathcal{V}_{t, x}=\delta_{(\rho(t, x), m(t, x), S(t, x))}, \quad \text { for a.a. }(t, \boldsymbol{x}) \in(0, T) \times \Omega,
$$
and the strong convergence holds, that is,
$$
\begin{array}{lr}
	\left(\rho_{h_n}, \boldsymbol{m}_{h_n}, S_{h_n}\right) \rightarrow(\rho, \boldsymbol{m}, S) & \text { in }~~L^q\left((0, T) \times \Omega ; \mathbb{R}^{d+2}\right), \\
	E\left(\rho_{h_n}, \boldsymbol{m}_{h_n}, S_{h_n}\right) \rightarrow E(\rho, \boldsymbol{m}, S) & \text { in }~~L^q((0, T) \times \Omega),
\end{array}
$$
for any $1 \leq q<\infty$.

\item {\bf{Strong solution}}
Suppose that the Euler equations admit a strong solution $(\rho, \boldsymbol{m}, S)$ in the class
$$
\rho, S \in W^{1, \infty}((0, T) \times \Omega), ~\boldsymbol{m} \in W^{1, \infty}\left((0, T) \times \Omega ; \mathbb{R}^d\right), ~\rho \geq \underline{\rho}>0 \text { in }~[0, T) \times \Omega,
$$
emanating from the initial data $\left(\rho_0, \boldsymbol{m}_0, S_0\right)$. Then it holds
$$
\left(\rho_h, \boldsymbol{m}_h, S_h\right) \rightarrow(\rho, \boldsymbol{m}, S) \quad \text { in } L^q\left((0, T) \times \Omega ; \mathbb{R}^{d+2}\right),
$$
$$
E\left(\rho_h, \boldsymbol{m}_h, S_h\right) \rightarrow E(\rho, \boldsymbol{m}, S) \quad \text { in } L^q((0, T) \times \Omega),
$$
for any $1 \leq q<\infty$.
\end{itemize}

\end{Theorem}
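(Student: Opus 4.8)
The plan is to deduce Theorem~\ref{strogC} from the abstract convergence theory of \cite[Proposition~7.2]{FLMS} --- the generalized Lax equivalence principle together with the theory of DW solutions --- by feeding into it the stability estimates of Section~\ref{sec_sta}, above all the weak BV estimate~\eqref{BVE}, and the consistency formulation of Section~\ref{sec_con}. Theorem~\ref{weakC} already furnishes, along a subsequence, a Young measure $\{\mathcal V_{t,x}\}$ and a DW solution $(\vr,\vm,S)$ realised as the weak-$(*)$ limit, with the energy defect $\mathfrak F$ and the Reynolds defect $\Re$ controlled through~\eqref{BVE}. What remains is to upgrade this weak-$(*)$ statement to the three strong ones in the theorem.

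First I would establish the Ces\`aro (K-convergence) statement. Since $\{\vr_h,\vm_h,S_h\}$ is bounded in $L^\infty((0,T)\times\Omega;\R^{d+2})$ by~\eqref{Assu} and generates the Young measure $\{\mathcal V_{t,x}\}$, a Koml\'os-type argument as in \cite[Section~7]{FLMS} gives $\frac1N\sum_{k=1}^N g\big(\vr_{h_k},\vm_{h_k},S_{h_k}\big)\to\langle\mathcal V_{t,x};g\rangle$ strongly in $L^q((0,T)\times\Omega)$ for every $1\le q<\infty$ and every continuous $g$; taking $g$ to be the identity and $g=E$ yields the two displayed limits, the bound~\eqref{Assu} being what promotes $L^1$- to $L^q$-convergence.

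For the case of a weak entropy solution, the key point is that such a solution conserves the total energy, so $\int_\Omega E(\vr,\vm,S)(\tau,\cdot)\dx=\int_\Omega E(\vr_0,\vm_0,S_0)\dx$; together with the monotonicity of $\mathcal E$ and the identity $\mathcal E(\tau+)=\int_\Omega E(\vr,\vm,S)(\tau,\cdot)\dx+\mathfrak F$ from Definition~\ref{d4.1}, this forces $\mathfrak F\equiv0$, hence $\Re\equiv0$ via the trace compatibility. Then $\langle\mathcal V_{t,x};E(\widetilde{\vr},\widetilde{\vm},\widetilde{S})\rangle=E\big(\langle\mathcal V_{t,x};(\widetilde{\vr},\widetilde{\vm},\widetilde{S})\rangle\big)$ a.e., and strict convexity of the total energy collapses $\mathcal V_{t,x}$ to the Dirac mass $\delta_{(\vr,\vm,S)(t,x)}$; since a Dirac Young measure makes the weak-$(*)$ limit of $g(\vr_h,\vm_h,S_h)$ equal to $g$ evaluated at the limit for every continuous $g$, the convergence improves to strong $L^q$-convergence, both for the conservative variables and for the energies. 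For the strong-solution case I would invoke the weak--strong uniqueness principle for DW solutions, i.e.\ the relative energy inequality of \cite{FLMS}: it shows that any DW solution emanating from $(\vr_0,\vm_0,S_0)$ coincides on $[0,T)$ with the given Lipschitz strong solution, again with $\mathcal V_{t,x}$ a Dirac mass and vanishing defects, so that the limit is unique; the subsequence is then superfluous and the whole family $\{\vr_h,\vm_h,S_h\}_{h\searrow0}$ converges strongly in $L^q$, with the energies converging as well.

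The hard part is not the weak--strong uniqueness step itself --- that is imported from \cite{FLMS} --- but checking that the FVEG scheme actually satisfies the hypotheses of that framework, namely that all the consistency errors, not only for the conservation law but also for the momentum balance carrying the Reynolds defect and for the entropy inequality, vanish as $h\to0$. This is precisely where the weak BV estimate~\eqref{BVE} is decisive, since it is what makes the $h^{1/2}$ prefactor in the consistency bound of Section~\ref{sec_con}, of the shape $h^{1/2}\|\bm\phi\|_{C^1([0,T]\times\overline\Omega)}\big(\int_0^T\int_\Sigma\|\jump{\vU_h}_\sigma\|_2^2\,{\dS}_x\dt\big)^{1/2}$, genuinely tend to zero.
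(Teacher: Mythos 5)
Your proposal is correct and follows essentially the same route as the paper, which proves Theorem~\ref{strogC} simply by invoking \cite[Proposition~7.2]{FLMS} once the stability (Section~\ref{sec_sta}, in particular \eqref{Assu} and the weak BV estimate \eqref{BVE}) and the consistency formulation of Section~\ref{sec_con} are in place. Your reconstruction of the ingredients behind that citation --- K-convergence of Ces\`aro averages, vanishing of the energy and Reynolds defects via energy conservation and the trace compatibility, collapse of the Young measure by strict convexity, and weak--strong uniqueness through the relative energy --- is accurate and in fact more detailed than what the paper records.
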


\begin{Remark}
Since the wave equation system is linear, the weak convergence of numerical sequence $\{\phi_h, u_h, v_h\}_{h\searrow 0}$ directly implies the convergence to a weak solution in Theorem \ref{weakC}. Due to the weak-strong uniqueness principle \cite[Theorem 6.2]{FLMS} . We obtain strong convergence $$\phi_h\to \phi, u_h\to u, v_h\to v \text { in }L^q((0,T); L^2(\Omega)),\quad q\in[1,\infty),$$ if the strong solution $(\phi,u,v)$ exists on $[0,T]$.
\end{Remark}

\section{Numerical results}
Here we will present several numerical experiments for the linear wave equation system as well as for the nonlinear Euler equations to illustrate behaviour of the FVEG method.
\begin{Example}\label{e1}
Let $\Omega=[-1,1]\times[-1,1]$, consider the following initial data for the wave equation system, 
\begin{align}
& \phi(x,y,0)=-\frac{1}{c}(\sin(2\pi x)+\sin(2\pi y)),\\\nonumber
&\vu(x,y,0)=(0,0),\quad \vu=(u,v).
\end{align}
The speed of sound is set to $c=1$ and periodic boundary conditions are imposed. It can be easily verified that the exact solution has the form
\begin{align}
&\phi(x,y,t)=-\frac{1}{c}\cos(2\pi ct)(\sin(2\pi x)+\sin(2\pi y)),\\\nonumber
&\vu(x,y,t)=\left(-\frac{1}{c}(\sin(2\pi ct)\cos(2\pi x)),\frac{1}{c}(\sin(2\pi ct)\cos(2\pi y))\right).
\end{align}
\end{Example}

The structure of the solution obtained by FVEG method \eqref{scheme} is illustrated in Figure~\ref{fig:three_images}, which is consistent with the results presented in \cite{LMW:2000}. We compute the error in the $L^1$-norm and its experimental order of convergence (EOC) at time $T = 0.1 $ for the grid sizes $1/h = 20, 40, 80, 160$ and $320$. Based on the results shown in Figure~\ref{fig3}, the FVEG method achieves the first-order convergence rate, see Table~\ref{tab01}.
%
%\begin{figure}[h]
%    \centering
%    \begin{minipage}{0.65\textwidth}
%        \centering
%       \begin{tabular}{c c  c c c c c}
%\hline
%$1/h$& $\phi$ & EOC & $u$ & EOC &$v$ & EOC \\
%%\multicolumn{3}{c}{} \\
%\hline
% 20& 2.25e-01 & ~&1.33e-01 &~&1.33e-01&~\\
%\hline
% 40&  1.18e-01& 0.9311& 6.64e-02&1.0079&6.64e-02& 1.0079\\
%\hline
% 80&  6.09e-02 & 0.9567&3.18e-02& 1.0590&3.18e-02&1.0590\\
%\hline
% 160& 3.03e-02  &  1.0067&1.66e-02&0.9436&1.66e-02&0.9436\\
% \hline
%320& 1.52e-02&0.9905 & 8.27e-03&1.0009&8.27e-03&1.0009\\
%\hline
%\end{tabular}
%\caption{Errors and EOC (experimental order of convergence) with the initial value presented in Example \ref{e1} in $L^1$-norm with $CFL=0.267$ at $T=0.1$.}
%        \label{tab:example}
%    \end{minipage}
%    \hfill
%    \begin{minipage}{0.3\textwidth}
%        \centering
%        \includegraphics[width=\textwidth]{error_wave} % 替换为你的图片文件名
%        \caption{Wave equation first-order convergence rates.}
%        \label{fig:example}
%    \end{minipage}
%\end{figure}

\begin{table}[ht]
        \centering
       \begin{tabular}{c c  c c c c c}
\hline
$1/h$& $\phi$ & EOC & $u$ & EOC &$v$ & EOC \\
%\multicolumn{3}{c}{} \\
\hline
 20& 2.25e-01 & ~&1.33e-01 &~&1.33e-01&~\\
\hline
 40&  1.18e-01& 0.9311& 6.64e-02&1.0079&6.64e-02& 1.0079\\
\hline
 80&  6.09e-02 & 0.9567&3.18e-02& 1.0590&3.18e-02&1.0590\\
\hline
 160& 3.03e-02  &  1.0067&1.66e-02&0.9436&1.66e-02&0.9436\\
 \hline
320& 1.52e-02&0.9905 & 8.27e-03&1.0009&8.27e-03&1.0009\\
\hline
\end{tabular}
\caption{Errors and EOC (experimental order of convergence) with the initial value presented in Example~\ref{e1} in $L^1$-norm with $CFL=0.267$ at $T=0.1$.}
\label{tab01}
\end{table}

\begin{figure}
      \centering
        \includegraphics[width=0.45\textwidth]{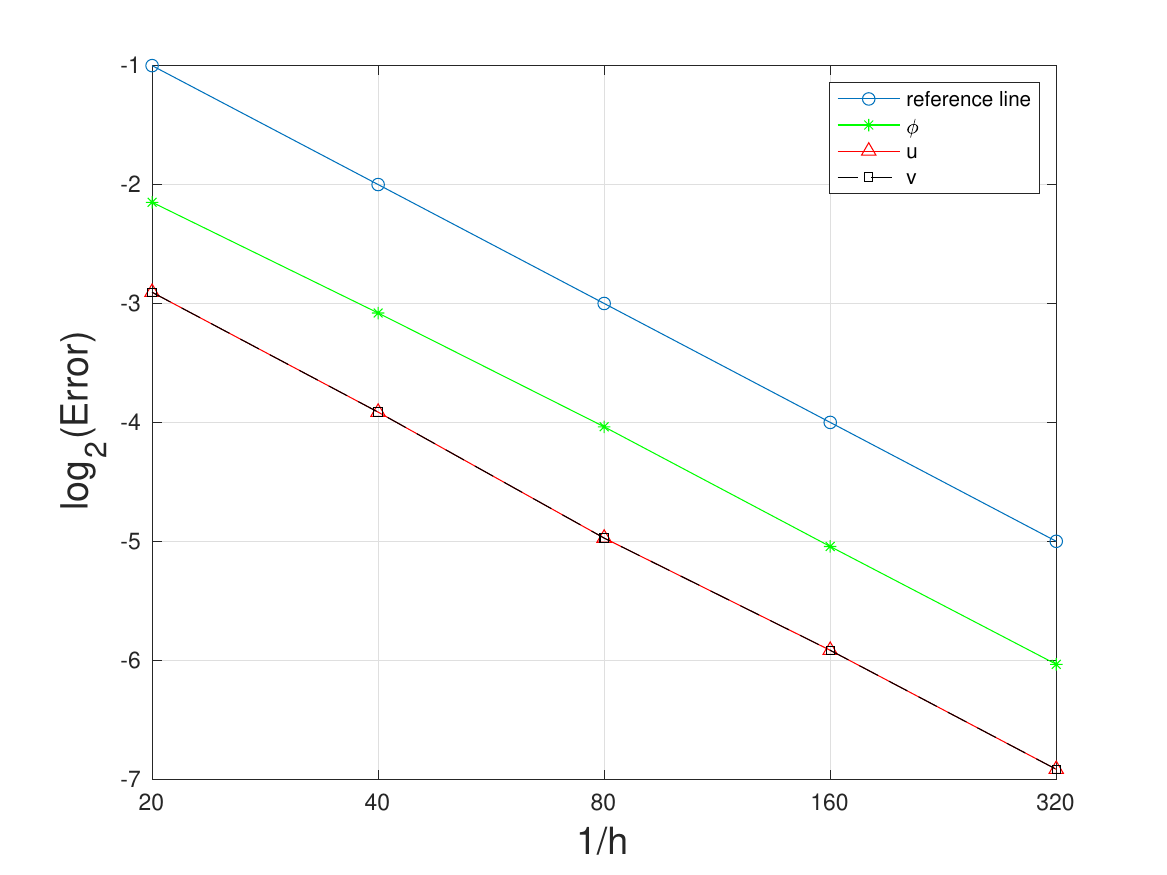} % 替换为你的图片文件名
        \caption{Example~\ref{e1}: first-order convergence rates.}
        \label{fig3}
\end{figure}

\begin{figure}[h]
    \centering
    \begin{subfigure}[b]{0.3\textwidth}
        \centering
        \includegraphics[width=2.5in]{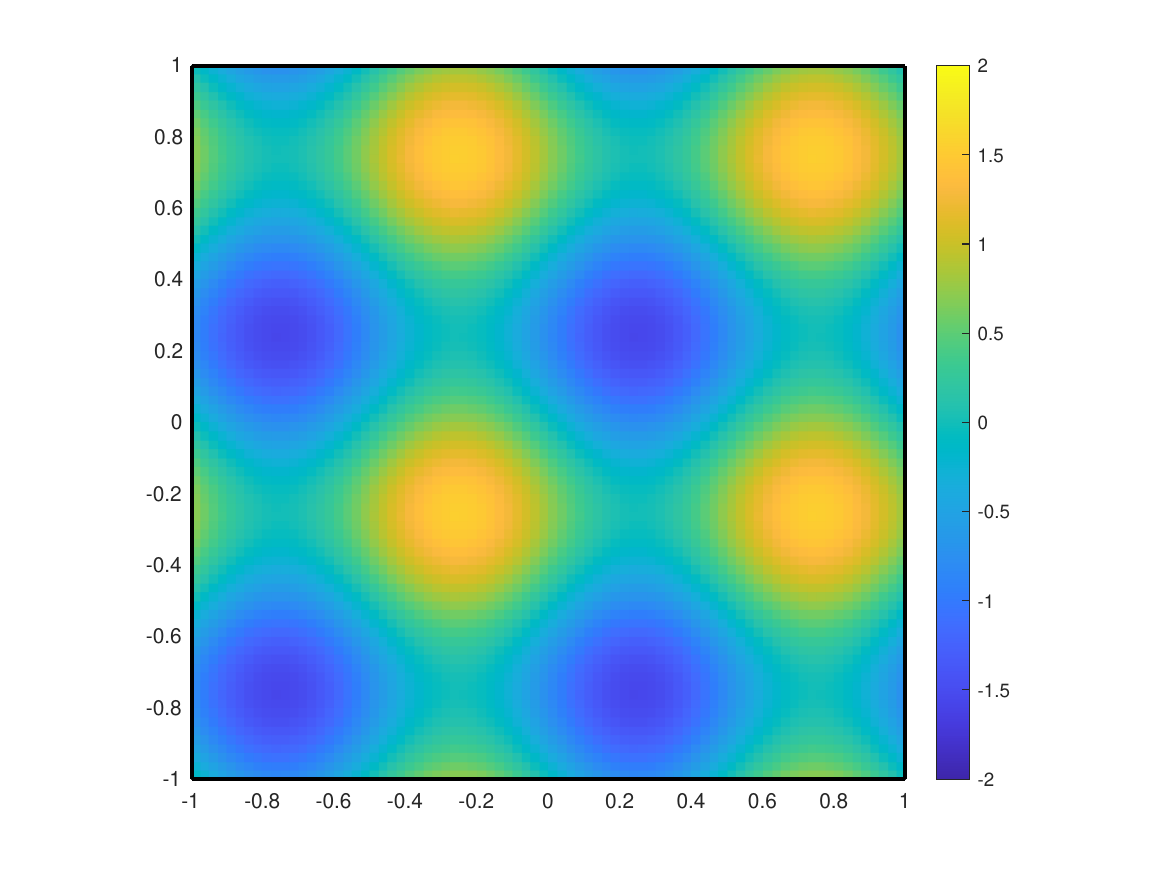} % 替换为你的图片文件名
%        \caption{$\phi$}
%        \label{fig:a}
    \end{subfigure}
    \hfill
    \begin{subfigure}[b]{0.3\textwidth}
        \centering
        \includegraphics[width=2.5in]{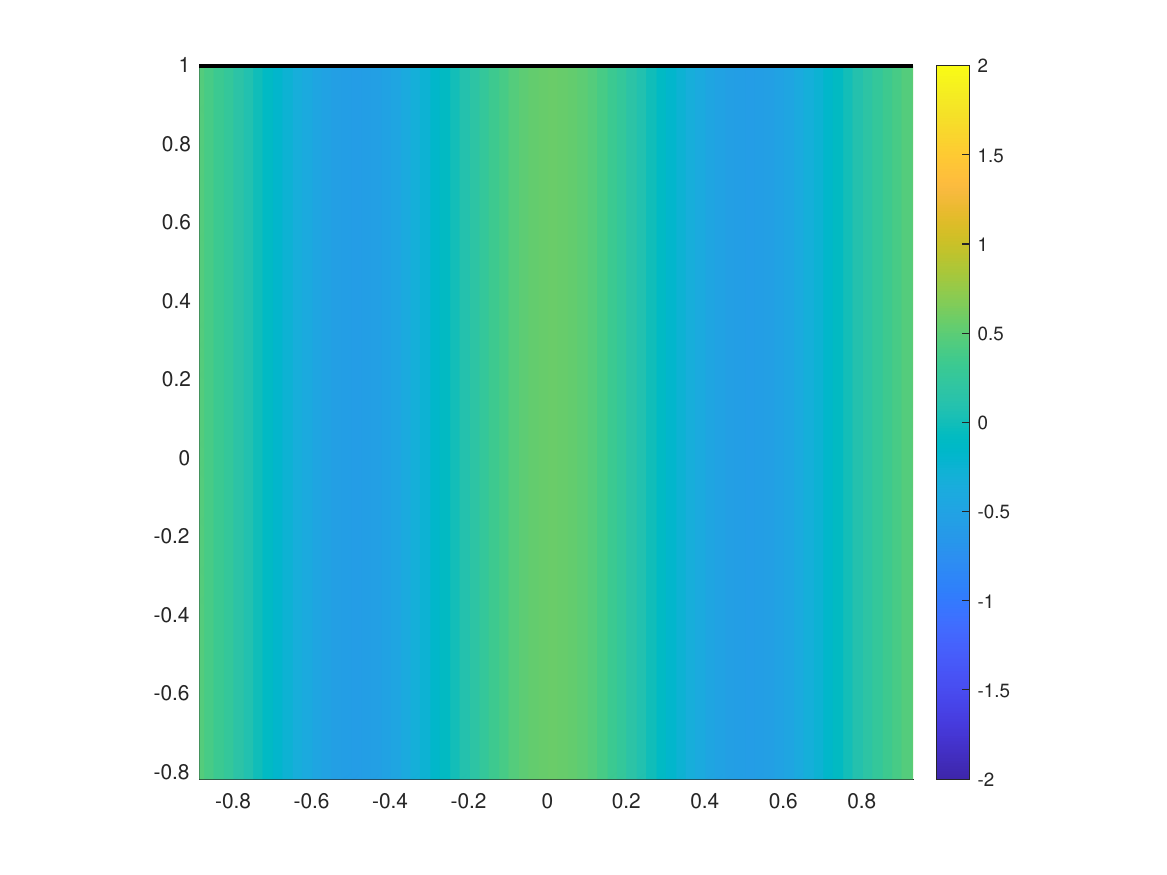} % 替换为你的图片文件名
%        \caption{$u$}
%        \label{fig:b}
    \end{subfigure}
    \hfill
    \begin{subfigure}[b]{0.3\textwidth}
        \centering
        \includegraphics[width=2.5in]{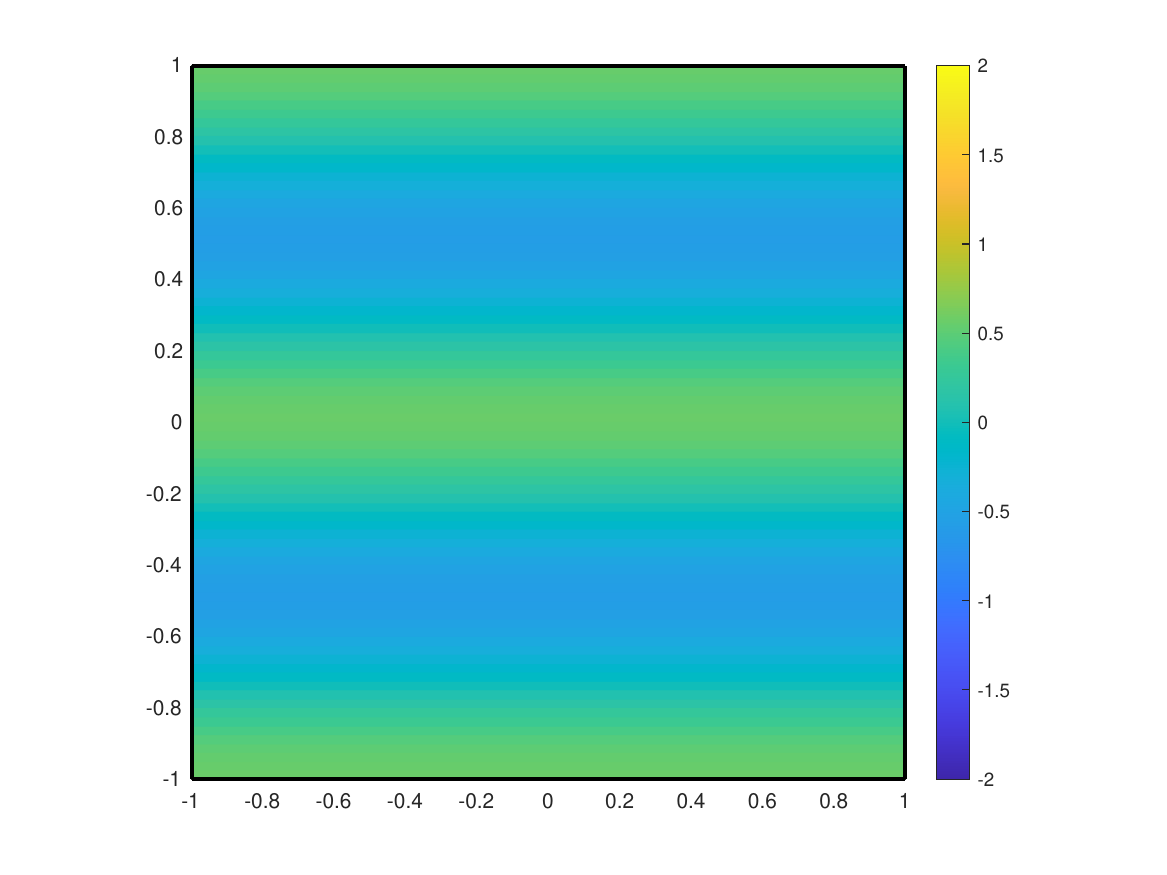} % 替换为你的图片文件名
%        \caption{$v$}
%        \label{fig:c}
    \end{subfigure}
    \caption{Solutions of Example~\ref{e1} at $T=0.1$ and $1/h=80$: $\phi$ (left), $u$ (middle), $v$ (right).}
    \label{fig:three_images}
\end{figure}

\begin{Example}
The Gresho problem is a stationary rotating vortex problem for the Euler equation with the initial data:
\begin{eqnarray*}
& &\rho(r,0)=1, \quad  \bm{u}(r,0)=\bm{n}\left\{
\begin{aligned}
& 5r,&  &0 \leqslant r\leqslant 0.2, \\
&2-5r,&&0.2 \leqslant r\leqslant 0.4, \\
&0, &&r>0.4,
\end{aligned}
\right.\\
& &p(r,0)=\left\{
\begin{aligned}
&5+12.5r^2,& & 0\leqslant r\leqslant 0.2, \\
&9-4\ln0.2+12.5r^2-20r+4\ln r,&  & 0.2\leqslant r<0.4,\\
&3+4\ln2,&&r\geqslant 0.4.
\end{aligned}
\right.\\ 
\end{eqnarray*}
Here $r=\sqrt{x^2+y^2}$, $\bm{n}=(-\sin\theta,\cos\theta)^T$, $\theta\in[0,2\pi]$ and $\bm{u}=(u,v)^T$. %the density set to one.
We choose the computational domain $\Omega=[-0.75,0.75]\times[-0.75,0.75]$ and periodic boundary conditions.
\end{Example}

Figure~\ref{Gresho} presents the numerical results generated by the FVEG method to solve the Euler equations on a $512 \times 512$ cell grid at time $T = 1$. The CFL number used was chosen according to the linear stability analysis, cf. \cite{EHL}. The results reveal that the density does not remain constant throughout the simulation. Instead, it fluctuates, with observed values ranging from $0.9968$ to $1.0165$.

\begin{figure}[h]
    \centering
    \begin{subfigure}[b]{0.3\textwidth}
        \centering
        \includegraphics[width=2.5in]{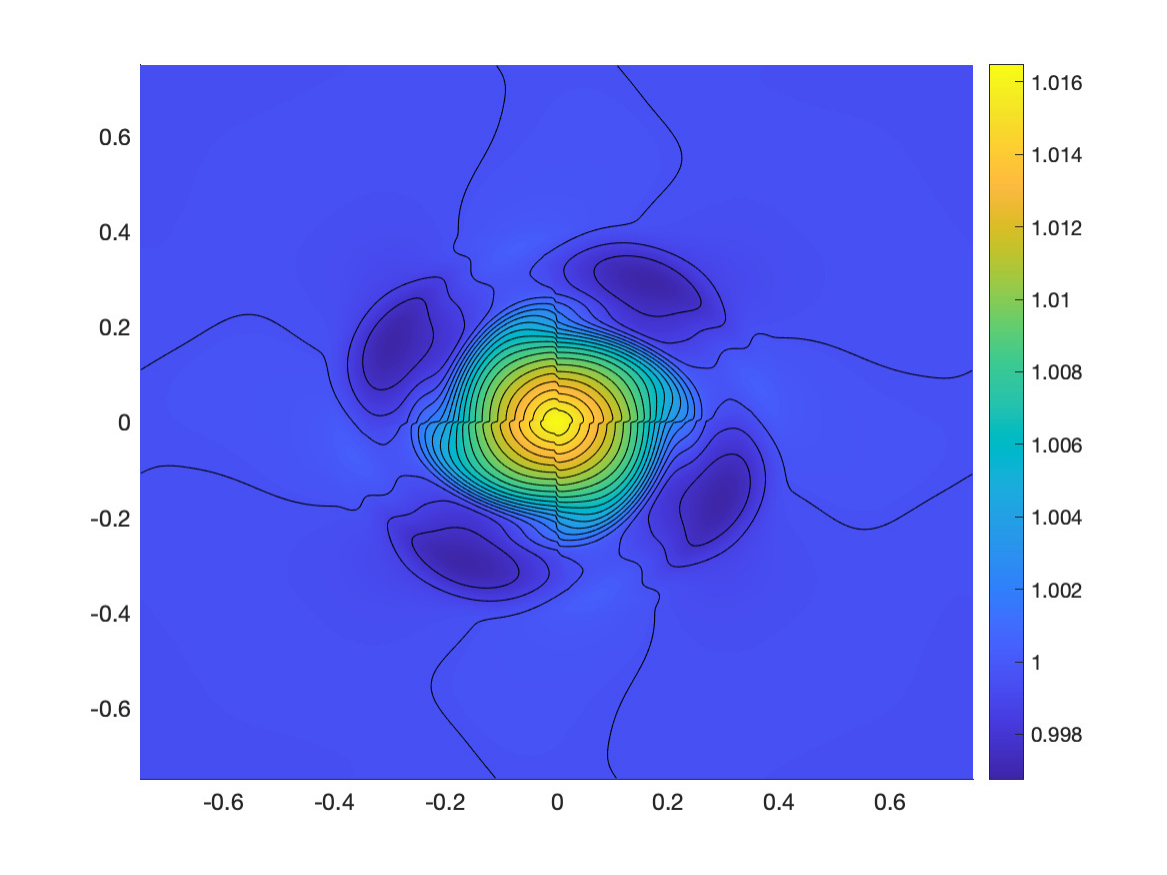} % 替换为你的图片文件名
%        \caption{$\phi$}
%        \label{fig:a}
    \end{subfigure}
    \hfill
    \begin{subfigure}[b]{0.3\textwidth}
        \centering
        \includegraphics[width=2.5in]{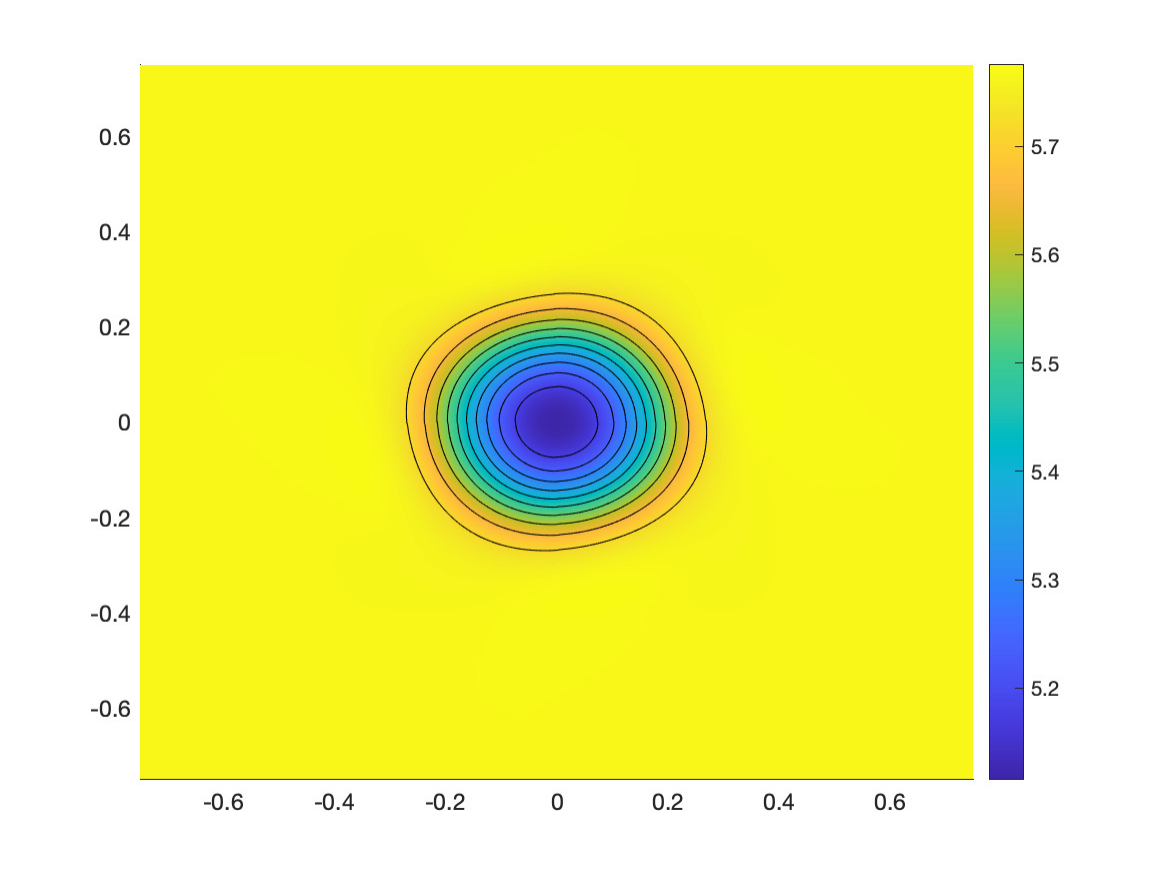} % 替换为你的图片文件名
%        \caption{$u$}
%        \label{fig:b}
    \end{subfigure}
    \hfill
    \begin{subfigure}[b]{0.3\textwidth}
        \centering
        \includegraphics[width=2.5in]{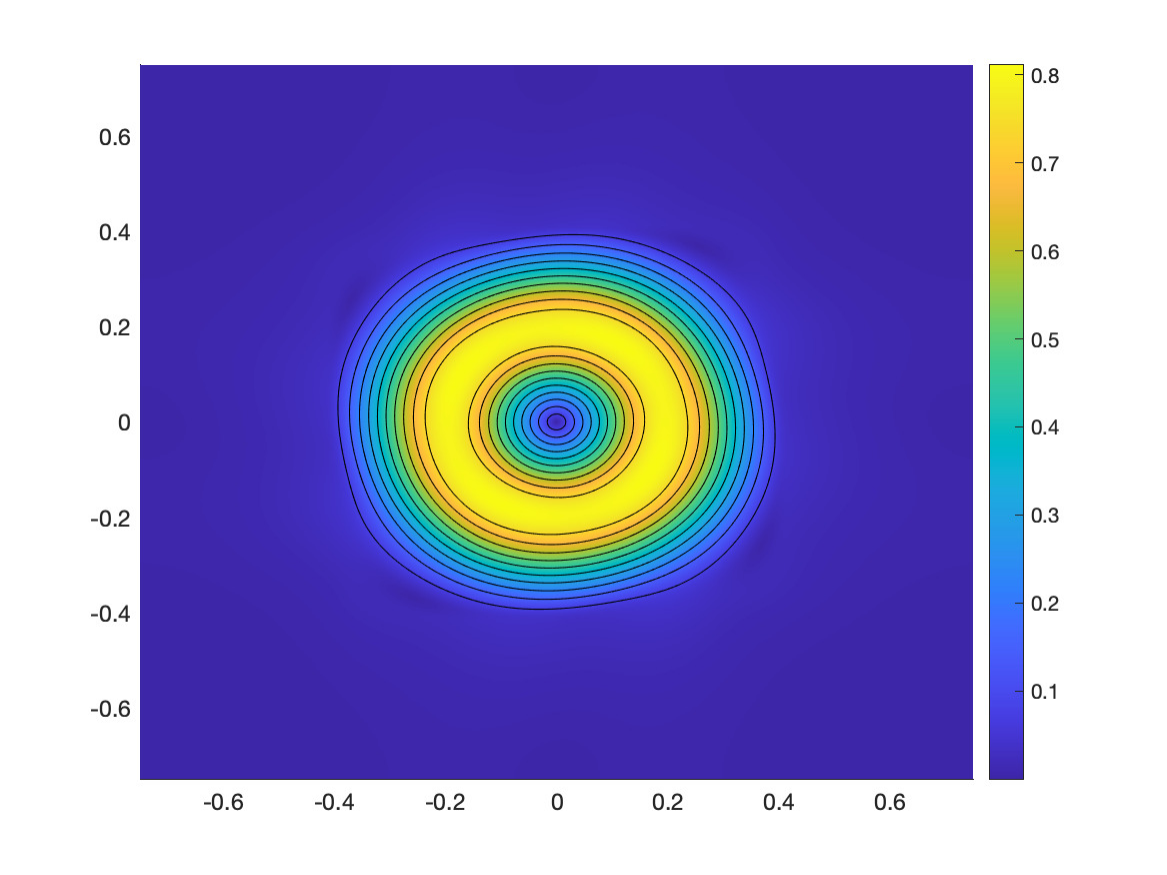} % 替换为你的图片文件名
%        \caption{$v$}
%        \label{fig:c}
    \end{subfigure}
    \caption{Solutions of Gresho problem computed at $T=1$ on a grid with $512\times512$: $\rho$ (left), $p$ (middle), $|\vu|$ (right).}
    \label{Gresho}
\end{figure}

\begin{Example}\label{e3}
We consider a smooth traveling vortex for the Euler equations, a rotating vortex is initially located at $(0.5,0.5)$ and propagates with constant speed $(u_c,v_c)$. The initial data are:
\begin{eqnarray*}
& &\rho(x,y,t)=\left\{
\begin{aligned}
&\rho_c+\frac{1}{2}(1-r^2)^6, & & r<1, \\
&\rho_c,& & \text{otherwise},
\end{aligned}
\right.\\
& &u(x,y,t)=\left\{
\begin{aligned}
&u_c-1024\sin(\theta)(1-r)^6r^6,& & r<1, \\
&u_c,& & \text{otherwise},
\end{aligned}
\right.\\
& &v(x,y,t)=\left\{
\begin{aligned}
&v_c+1024\cos(\theta)(1-r)^6r^6,& & r<1, \\
&v_c,& & \text{otherwise},
\end{aligned}
\right.\\
& &p(x,y,t)=\left\{
\begin{aligned}
&p_c+(p(r)-p(1)),&  & r<1, \\
&p_c,& & \text{otherwise}.
\end{aligned}
\right.\\
\end{eqnarray*}
Here $r$ is the scaled distance from the initial center of the vortex, i.e.$r=\sqrt{(x-0.5)^2+(y-0.5)^2}/R$, where $R$ is the radius of the vortex.The function $p(r)$ is described in \cite{EHL} .
In our computation we use $R = 0.4$, $\rho_c = 0.5$, $u_c = v_c = 1$ and $p_c = 0.1$. We simulate the vortex on the domain $[0, 1]\times[0, 1]$ using periodic boundary conditions. At $T = 1$ the exact solution agrees with the initial values.
\end{Example}

Table~\ref{t1} lists the error and convergence rate of traveling vortex problem computed by the FVEG scheme. Figure~\ref{Travel Vortex128} show the solution structure of the problem. At $T = 1$, as depicted in the bottom line of the figure, there are some changes in numerical solution due to numerical diffusion. These perturbations are visibly pronounced, indicating that the initially smooth geometry has been affected by the vortex dynamics over time. Naturally, by opting for a finer grid resolution, the vortex structure is preserved more accurately, see Figures~\ref{Travel Vortex256}, \ref{Travel Vortex512} with $256\times256$ and $512\times512$ mesh resolution.

\begin{table}[ht]
    \centering
    \begin{tabular}{c c c c c c c c c}
        \hline
        $1/h$ & $\rho$ & EOC & $\rho u$& EOC & $\rho v$ & EOC & $E$ & EOC\\
        \hline
        32 & 2.93e-02 &  ~&3.39e-02 & ~&3.48e-02 & ~& 3.54e-02 & ~\\
        \hline
         64&   2.05e-02& 0.5173& 2.45e-02 & 0.4706 &2.49e-02 &0.4763&2.69e-02&0.3934\\
         \hline
       128& 1.28e-02 &  0.6814&1.58e-02& 0.6304& 1.61e-02&  0.6306&1.79e-02& 0.5852\\
       \hline
       256& 7.13e-03 & 0.8419&9.28e-03 &0.7706&9.42e-03&0.7770&1.11e-02 &  0.6956 \\
        \hline
        512 & 3.72e-03 & 0.9396&5.16e-03&0.8461& 5.13e-03&0.8782&6.42e-03& 0.7868\\
        \hline
    \end{tabular}
     \caption{Error in the $L^1$-norm and EOC for the density, momentum and energy at $T = 1$ for Example~\ref{e3}. }
    \label{t1}
\end{table}

\begin{figure}[h]
    \centering
    \begin{subfigure}[b]{0.3\textwidth}
        \centering
        \includegraphics[width=2.5in]{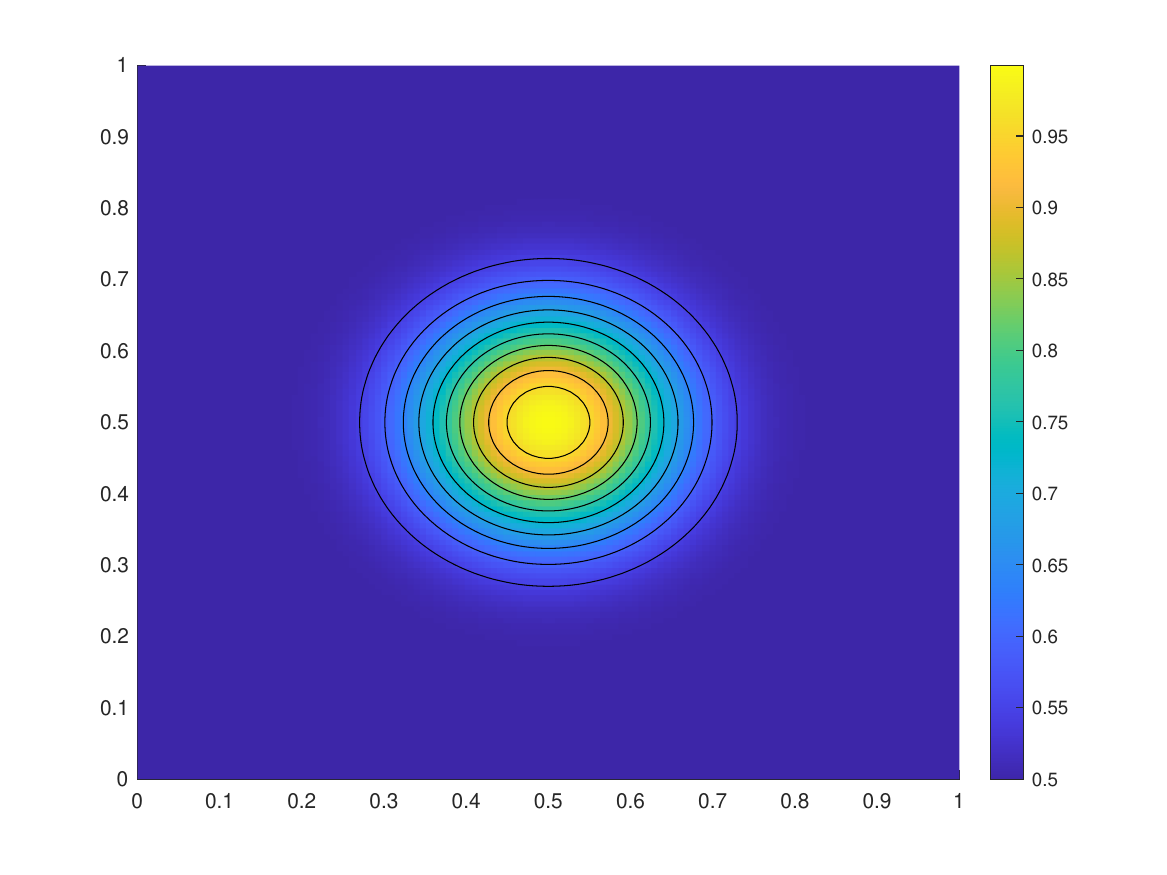} % 替换为你的图片文件名
%        \caption{$\phi$}
%        \label{fig:a}
    \end{subfigure}
    \hfill
    \begin{subfigure}[b]{0.3\textwidth}
        \centering
        \includegraphics[width=2.5in]{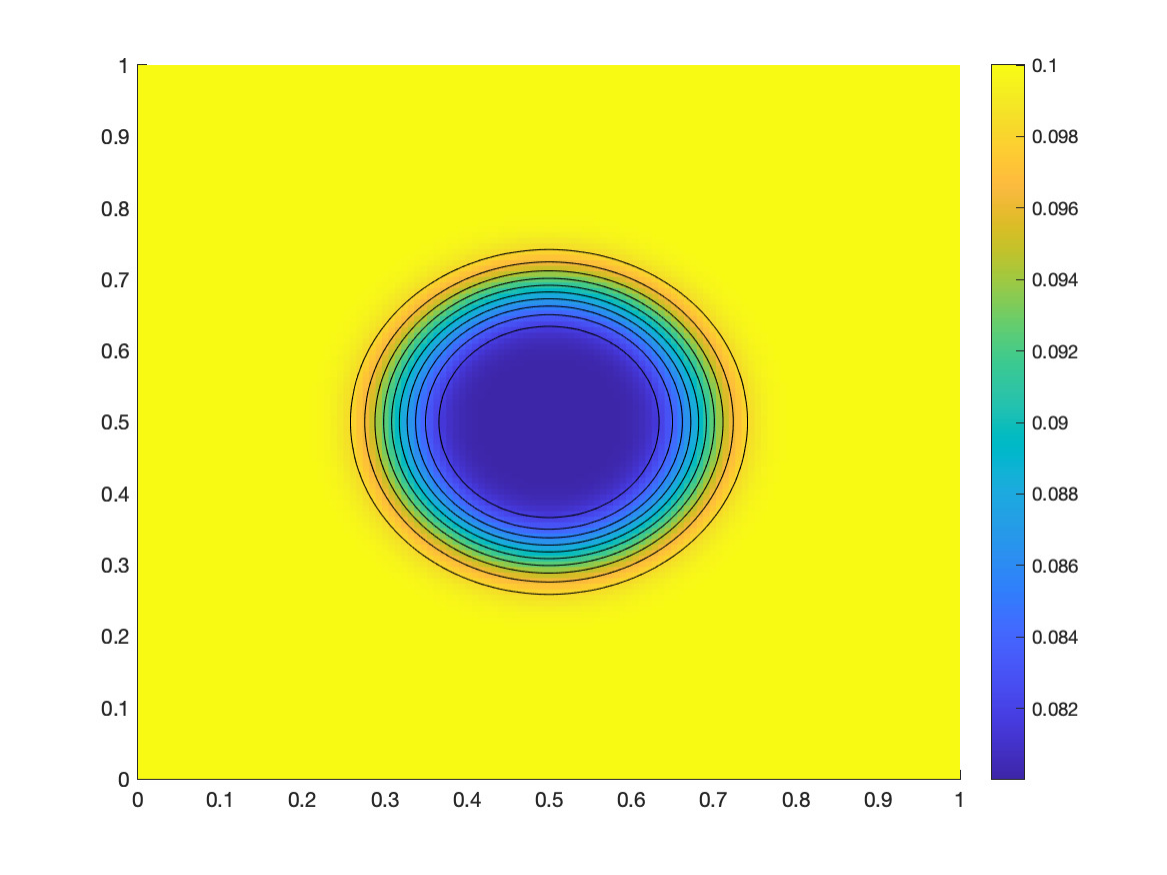} % 替换为你的图片文件名
%        \caption{$u$}
%        \label{fig:b}
    \end{subfigure}
    \hfill
    \begin{subfigure}[b]{0.3\textwidth}
        \centering
        \includegraphics[width=2.5in]{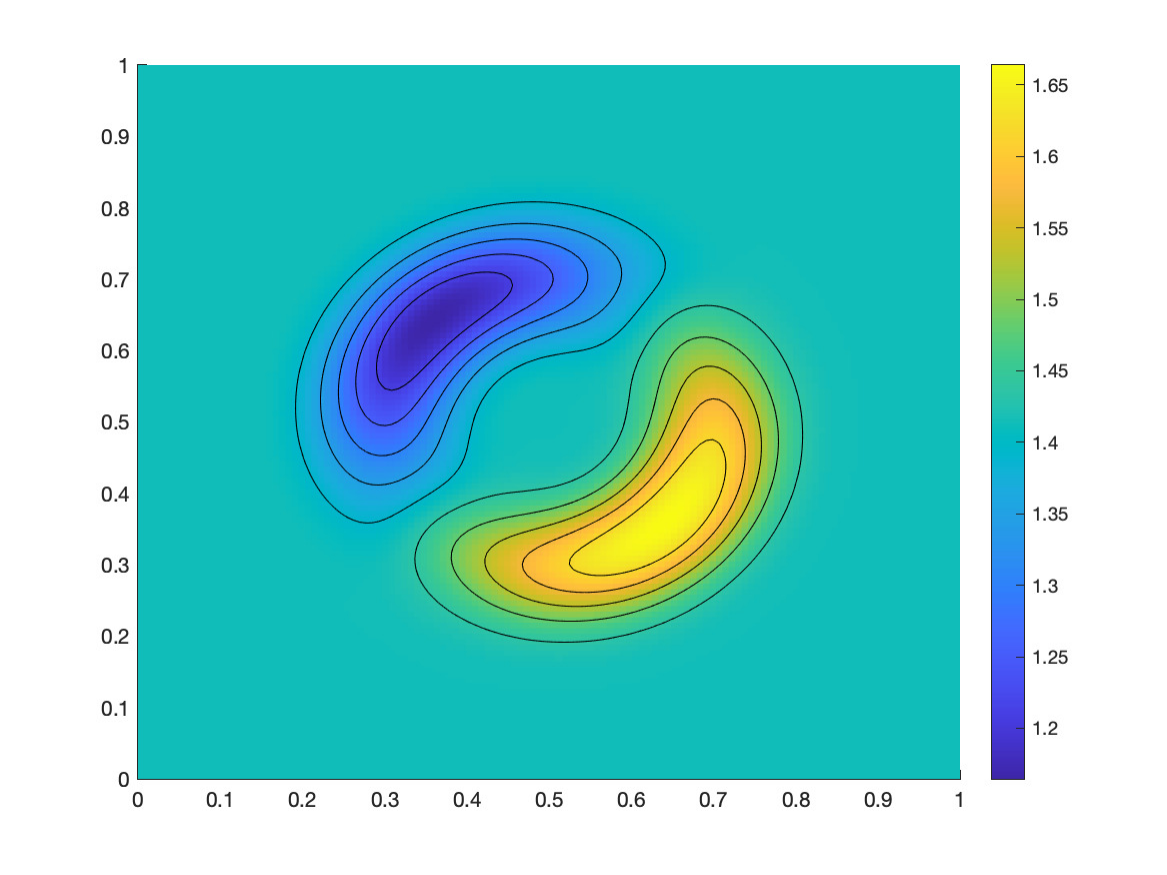} % 替换为你的图片文件名
%        \caption{$v$}
%        \label{fig:c}
    \end{subfigure}
    
        \begin{subfigure}[b]{0.3\textwidth}
        \centering
        \includegraphics[width=2.5in]{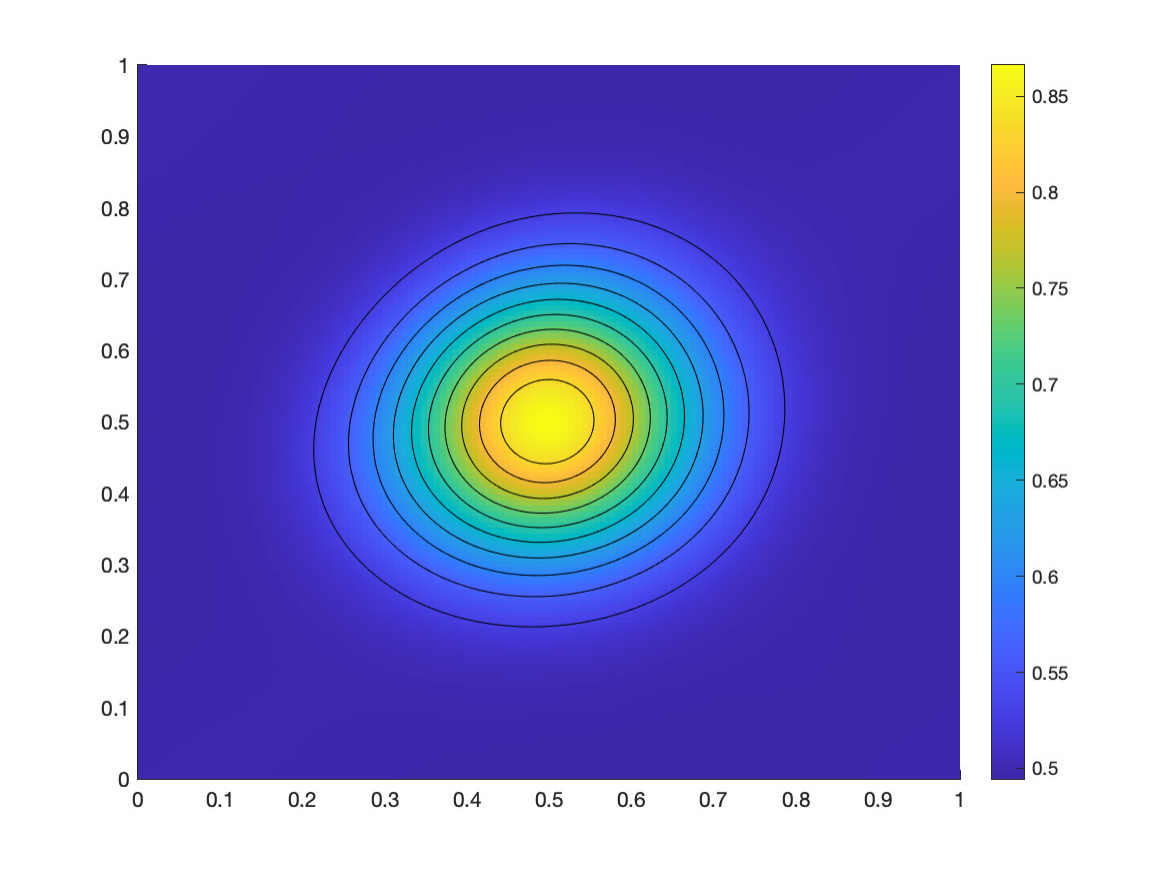} % 替换为你的图片文件名
%        \caption{$\phi$}
%        \label{fig:a}
    \end{subfigure}
    \hfill
    \begin{subfigure}[b]{0.3\textwidth}
        \centering
        \includegraphics[width=2.5in]{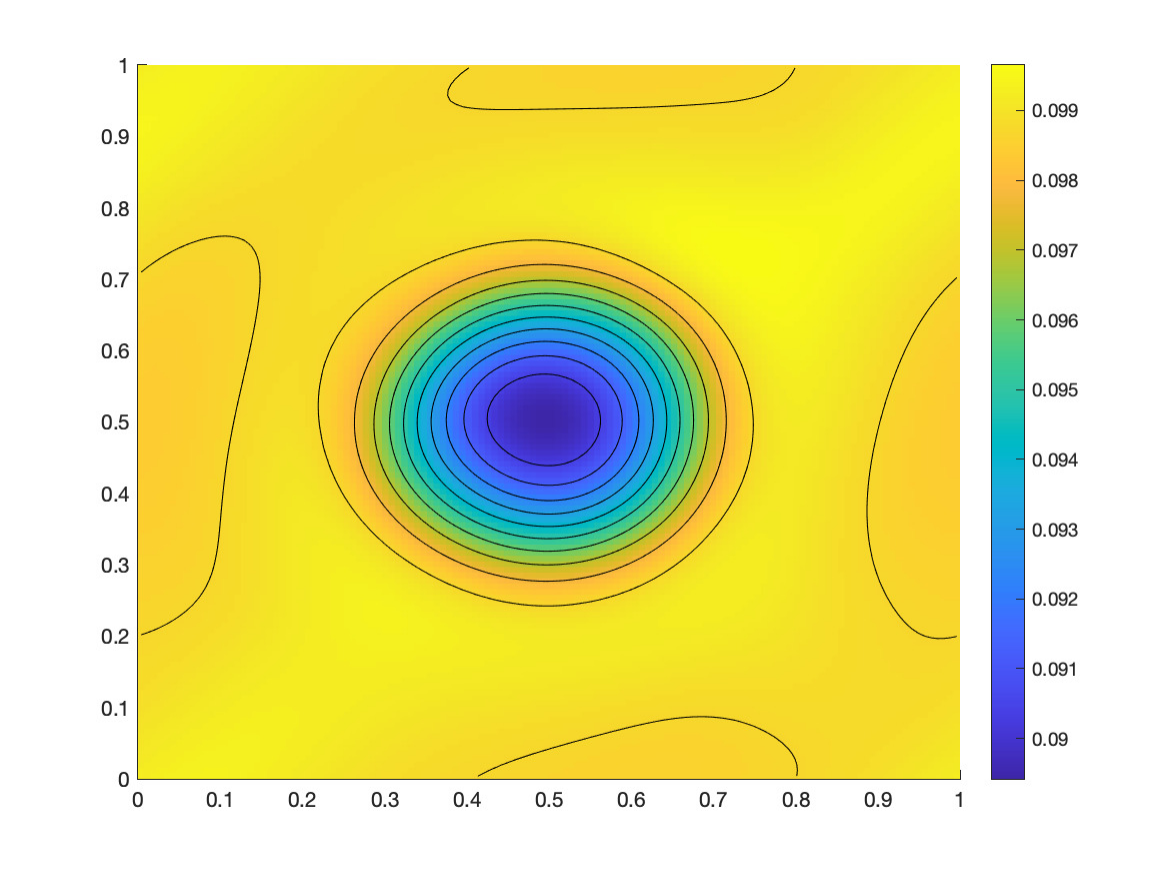} % 替换为你的图片文件名
%        \caption{$u$}
%        \label{fig:b}
    \end{subfigure}
    \hfill
    \begin{subfigure}[b]{0.3\textwidth}
        \centering
        \includegraphics[width=2.5in]{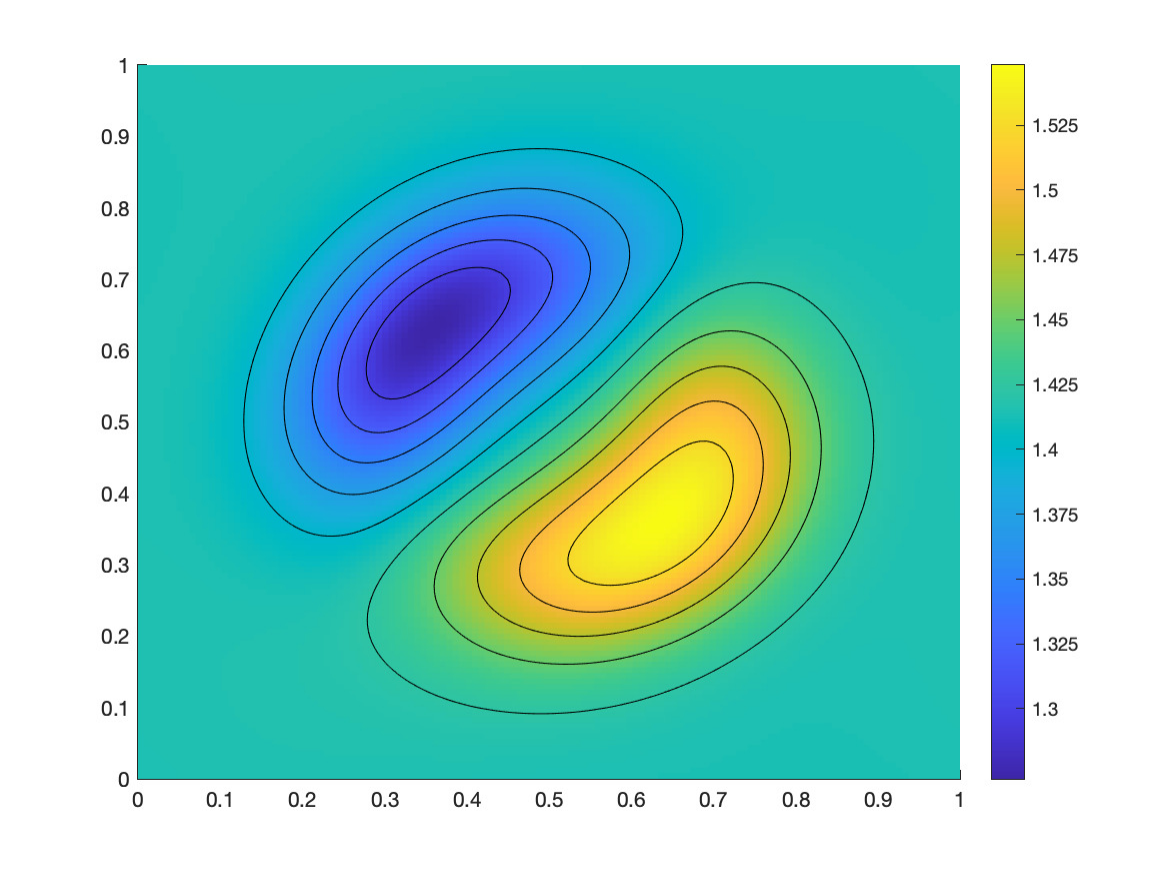} % 替换为你的图片文件名
%        \caption{$v$}
%        \label{fig:c}
    \end{subfigure}
    \caption{Solutions structure of traveling vortex problem at $T=0$ (top), $T=1$ (bottom) on a grid with $128\times128$: $\rho$ (left), $p$ (middle), $|\vu|$ (right).}
    \label{Travel Vortex128}
\end{figure}

\begin{figure}[h]
    \centering
    \begin{subfigure}[b]{0.3\textwidth}
        \centering
        \includegraphics[width=2.5in]{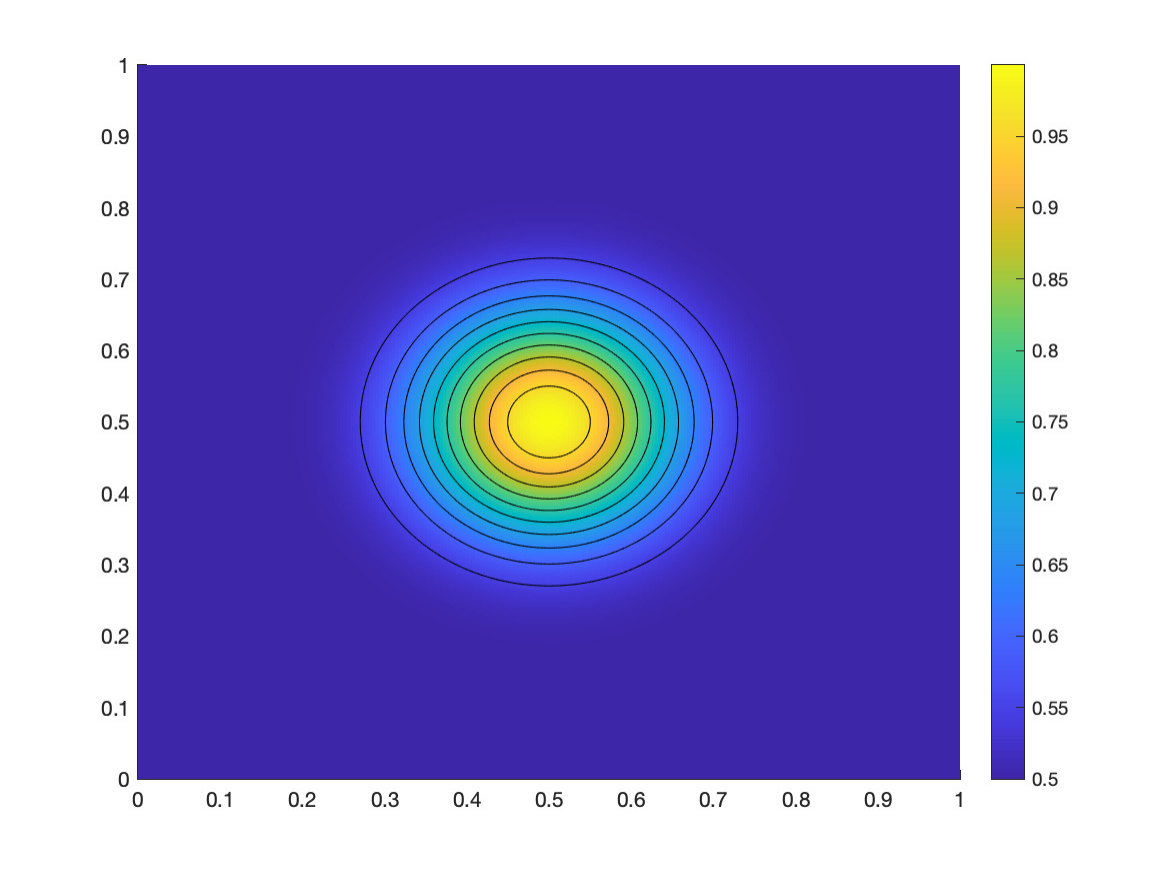} % 替换为你的图片文件名
%        \caption{$\phi$}
%        \label{fig:a}
    \end{subfigure}
    \hfill
    \begin{subfigure}[b]{0.3\textwidth}
        \centering
        \includegraphics[width=2.5in]{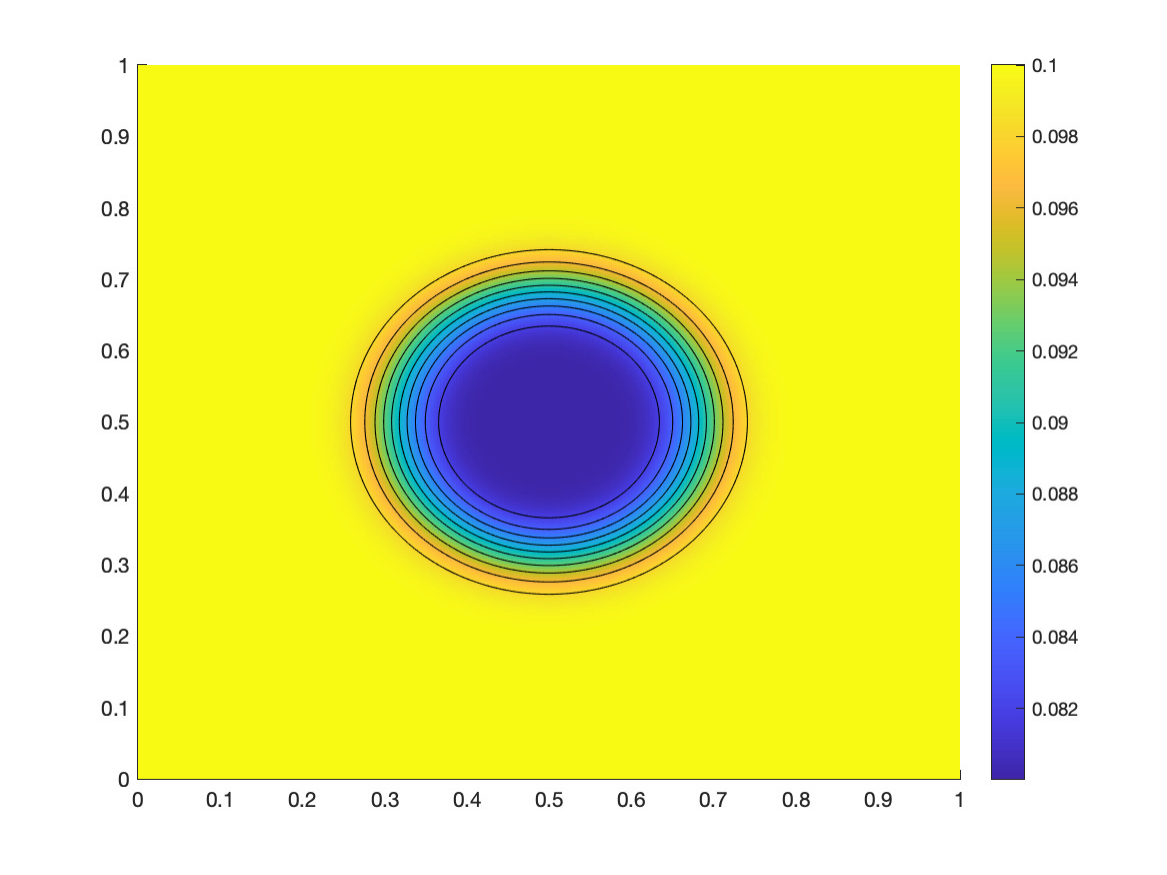} % 替换为你的图片文件名
%        \caption{$u$}
%        \label{fig:b}
    \end{subfigure}
    \hfill
    \begin{subfigure}[b]{0.3\textwidth}
        \centering
        \includegraphics[width=2.5in]{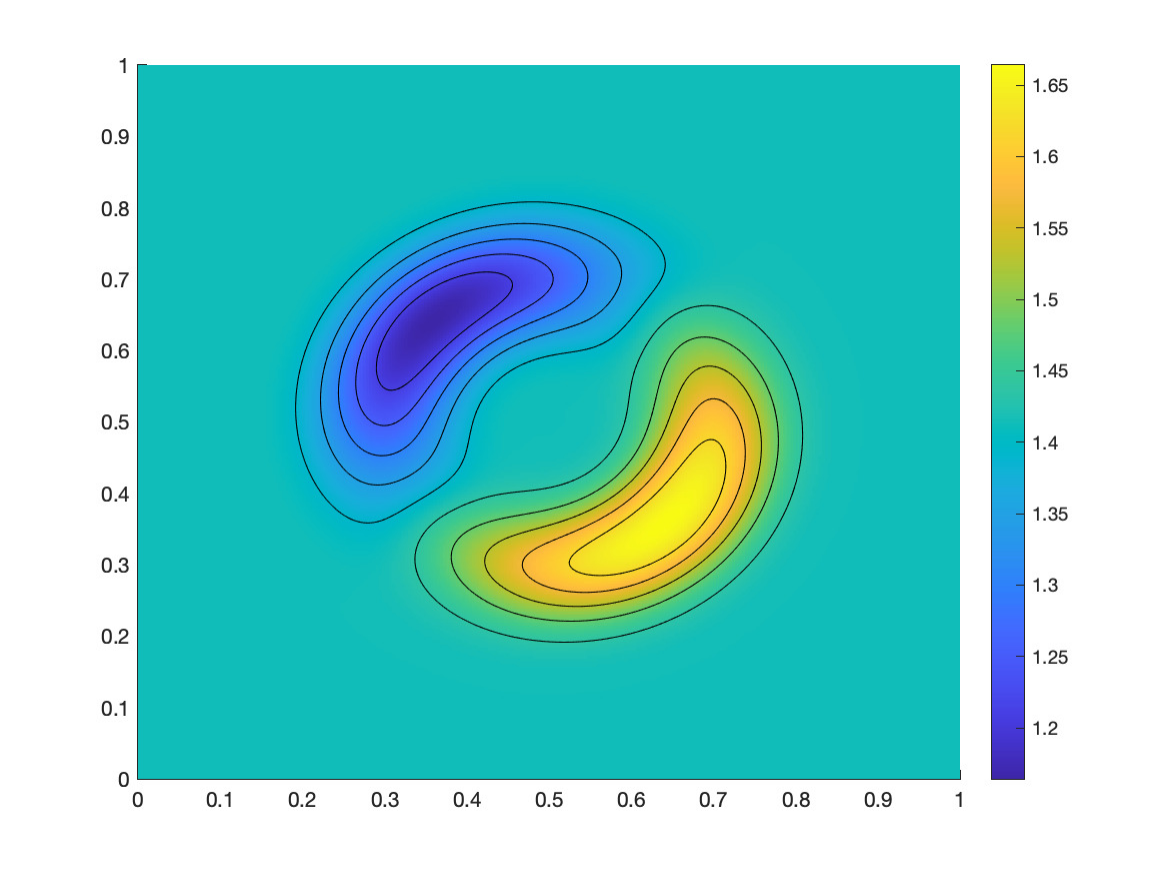} % 替换为你的图片文件名
%        \caption{$v$}
%        \label{fig:c}
    \end{subfigure}
    
        \begin{subfigure}[b]{0.3\textwidth}
        \centering
        \includegraphics[width=2.5in]{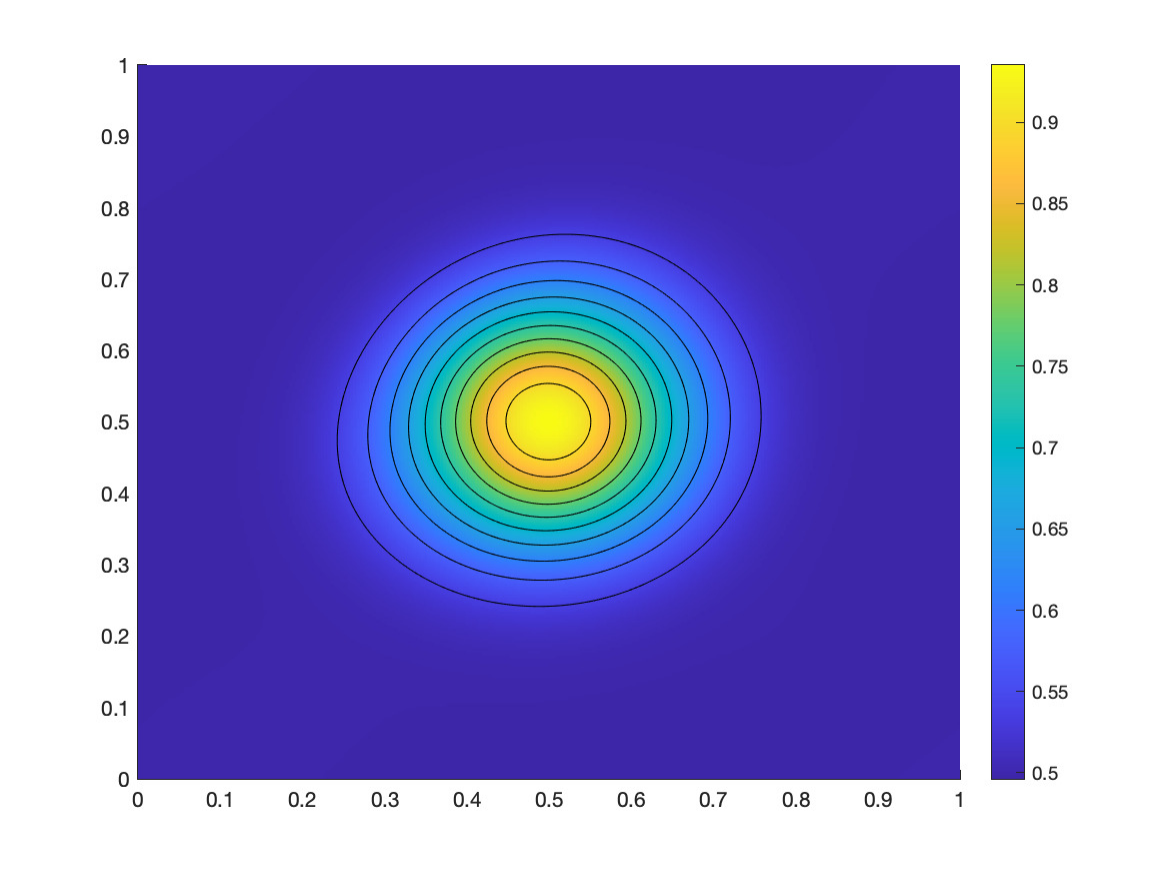} % 替换为你的图片文件名
%        \caption{$\phi$}
%        \label{fig:a}
    \end{subfigure}
    \hfill
    \begin{subfigure}[b]{0.3\textwidth}
        \centering
        \includegraphics[width=2.5in]{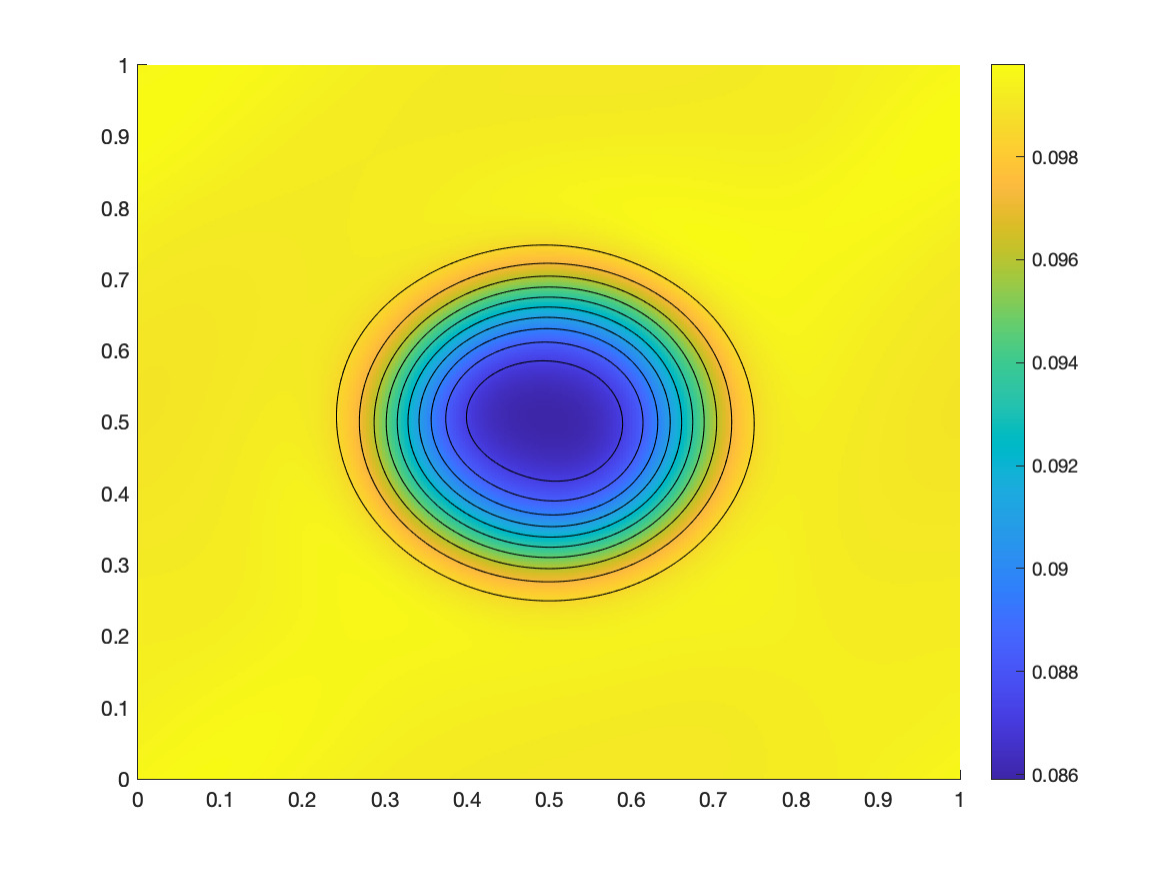} % 替换为你的图片文件名
%        \caption{$u$}
%        \label{fig:b}
    \end{subfigure}
    \hfill
    \begin{subfigure}[b]{0.3\textwidth}
        \centering
        \includegraphics[width=2.5in]{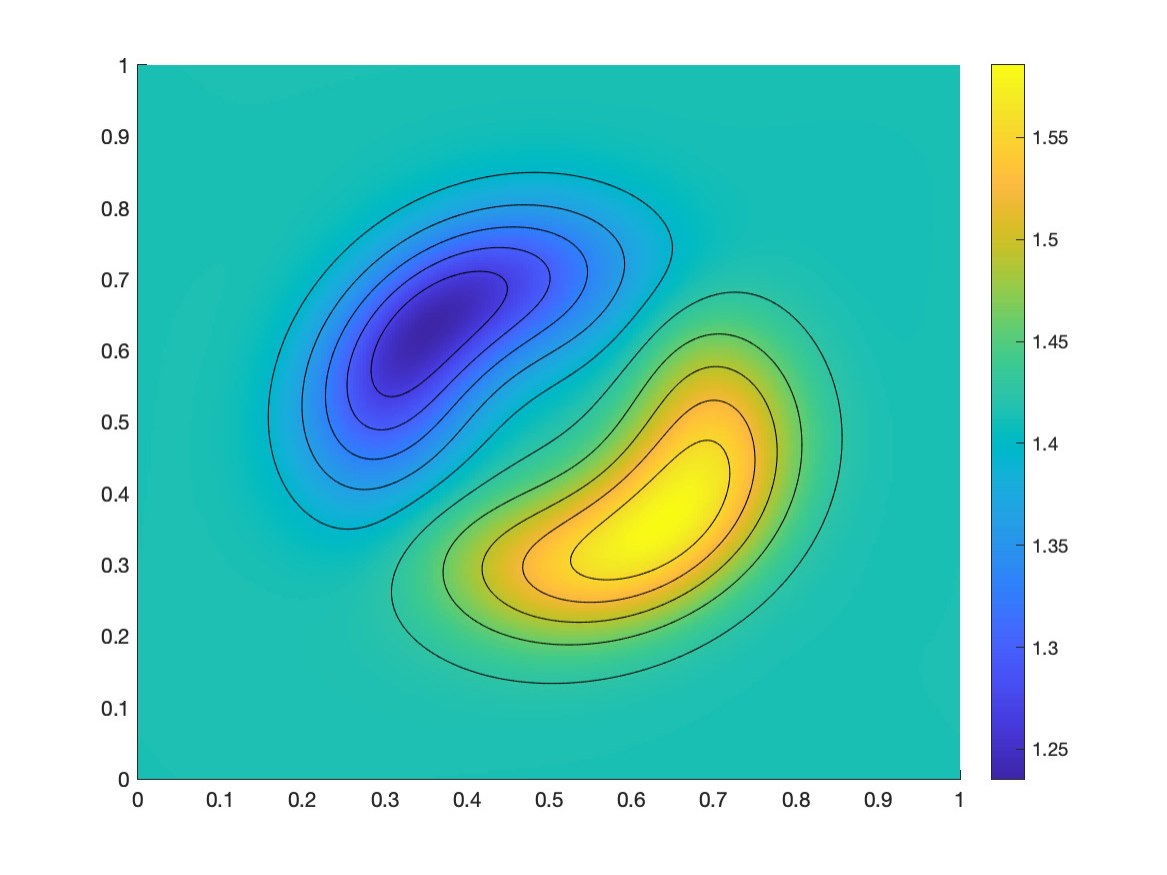} % 替换为你的图片文件名
%        \caption{$v$}
%        \label{fig:c}
    \end{subfigure}
    \caption{Solutions structure of traveling vortex problem at $T=0$ (top), $T=1$ (bottom) on a grid with $256\times256$: $\rho$ (left), $p$ (middle), $|\vu|$ (right).}
    \label{Travel Vortex256}
\end{figure}

\begin{figure}[h]
    \centering
    \begin{subfigure}[b]{0.3\textwidth}
        \centering
        \includegraphics[width=2.5in]{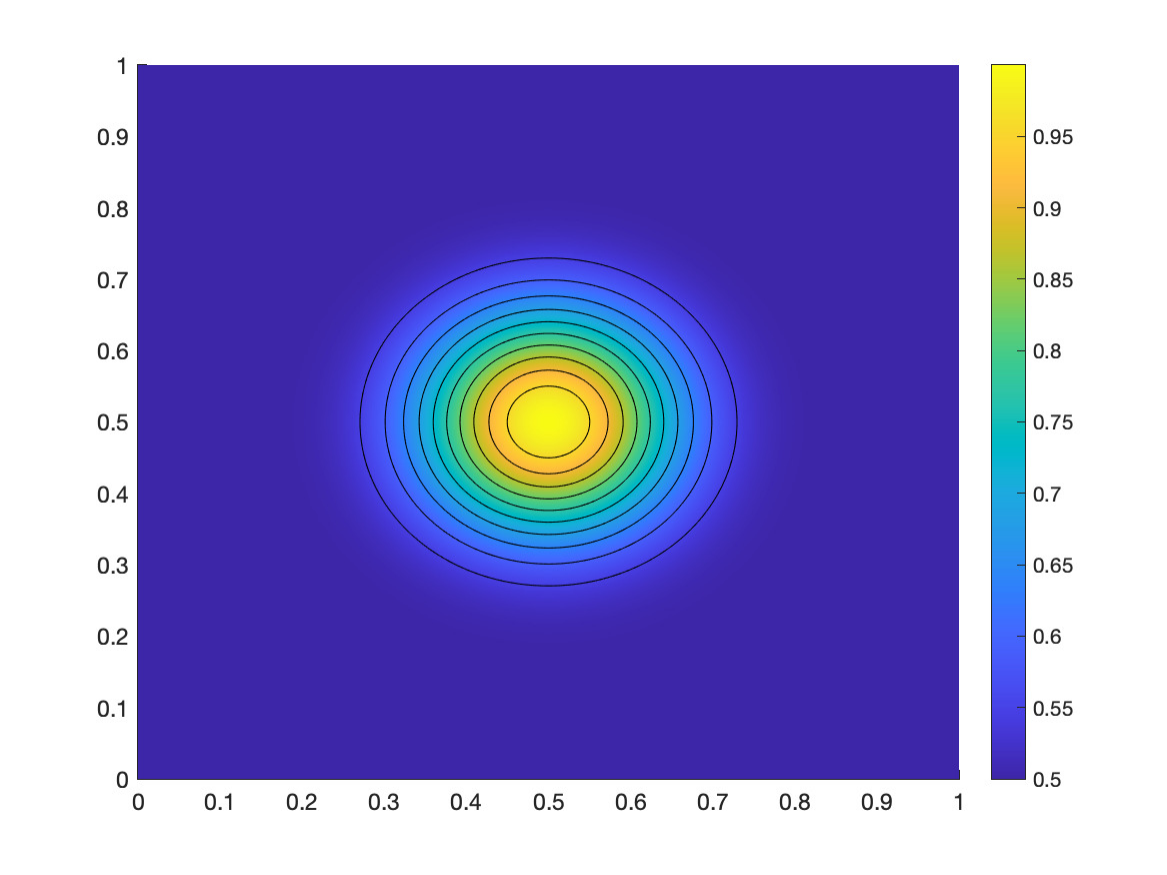} % 替换为你的图片文件名
%        \caption{$\phi$}
%        \label{fig:a}
    \end{subfigure}
    \hfill
    \begin{subfigure}[b]{0.3\textwidth}
        \centering
        \includegraphics[width=2.5in]{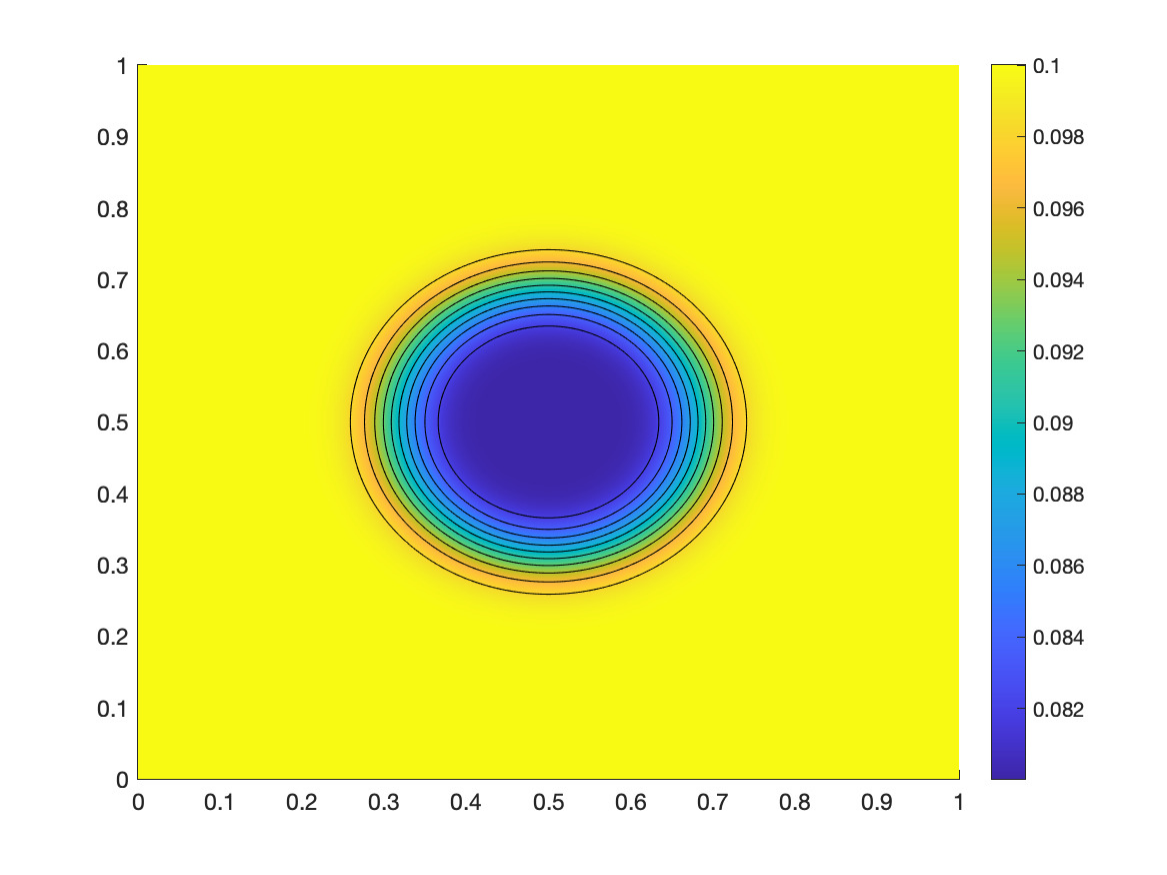} % 替换为你的图片文件名
%        \caption{$u$}
%        \label{fig:b}
    \end{subfigure}
    \hfill
    \begin{subfigure}[b]{0.3\textwidth}
        \centering
        \includegraphics[width=2.5in]{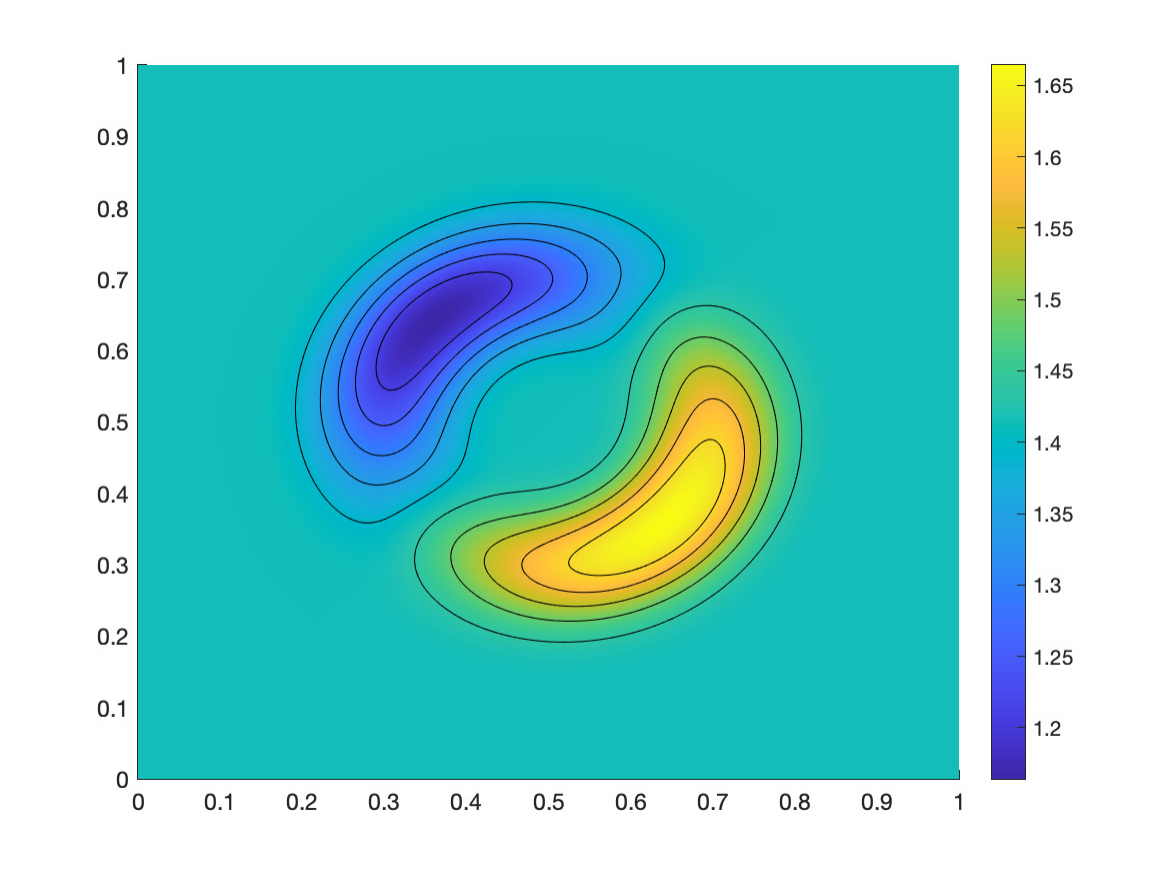} % 替换为你的图片文件名
%        \caption{$v$}
%        \label{fig:c}
    \end{subfigure}
    
        \begin{subfigure}[b]{0.3\textwidth}
        \centering
        \includegraphics[width=2.5in]{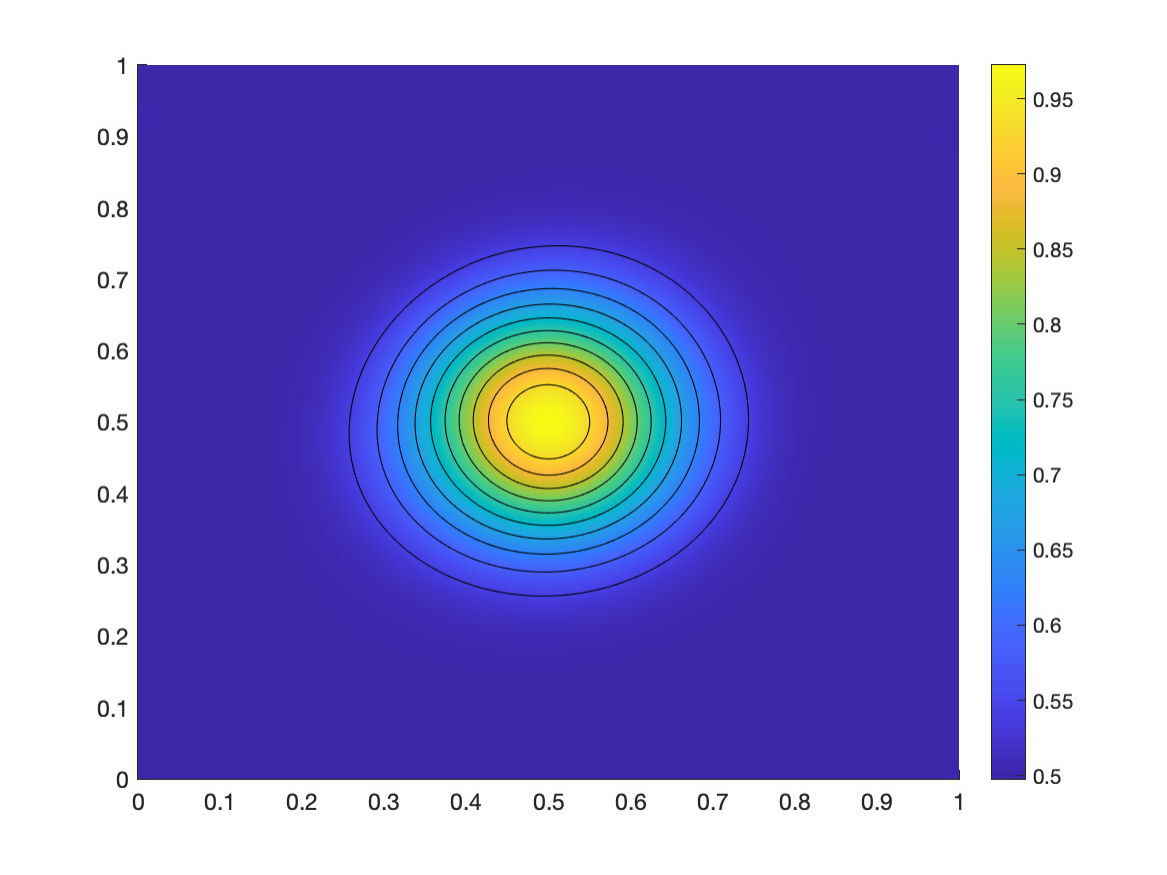} % 替换为你的图片文件名
%        \caption{$\phi$}
%        \label{fig:a}
    \end{subfigure}
    \hfill
    \begin{subfigure}[b]{0.3\textwidth}
        \centering
        \includegraphics[width=2.5in]{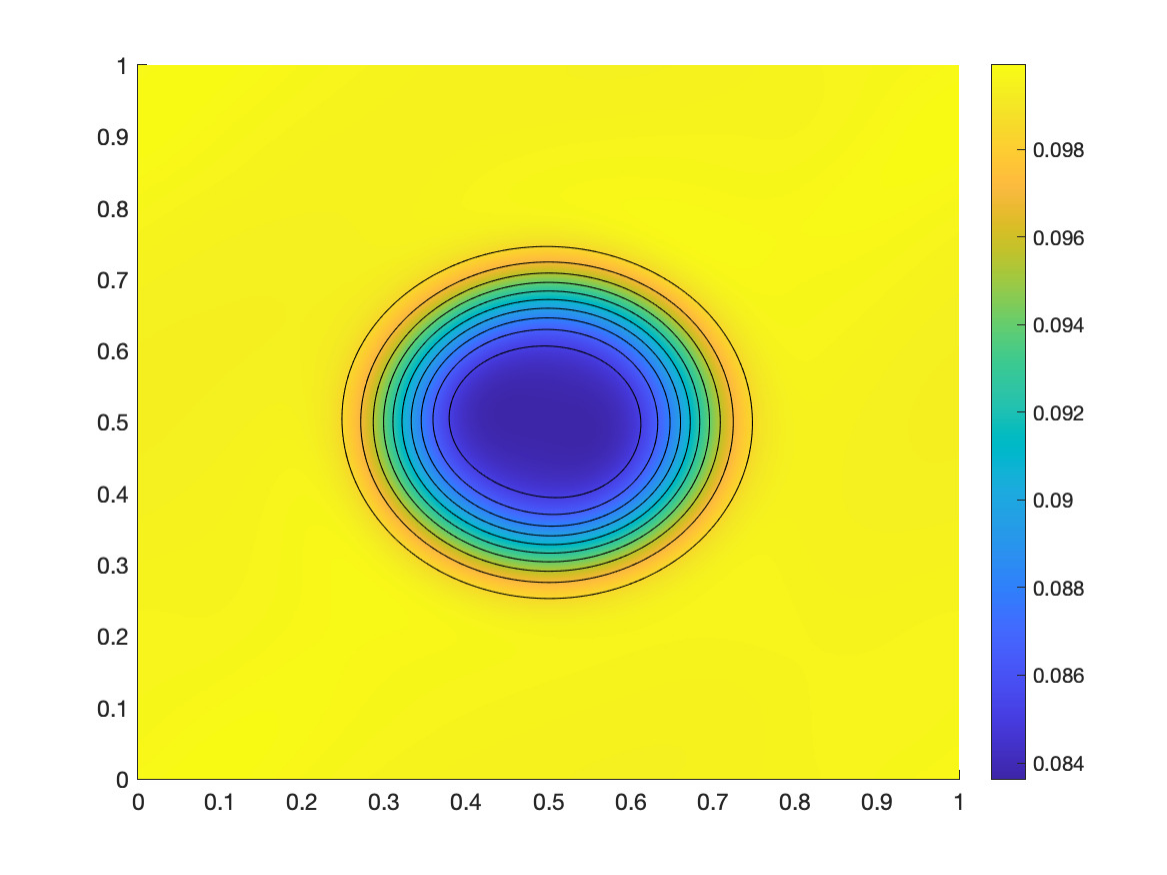} % 替换为你的图片文件名
%        \caption{$u$}
%        \label{fig:b}
    \end{subfigure}
    \hfill
    \begin{subfigure}[b]{0.3\textwidth}
        \centering
        \includegraphics[width=2.5in]{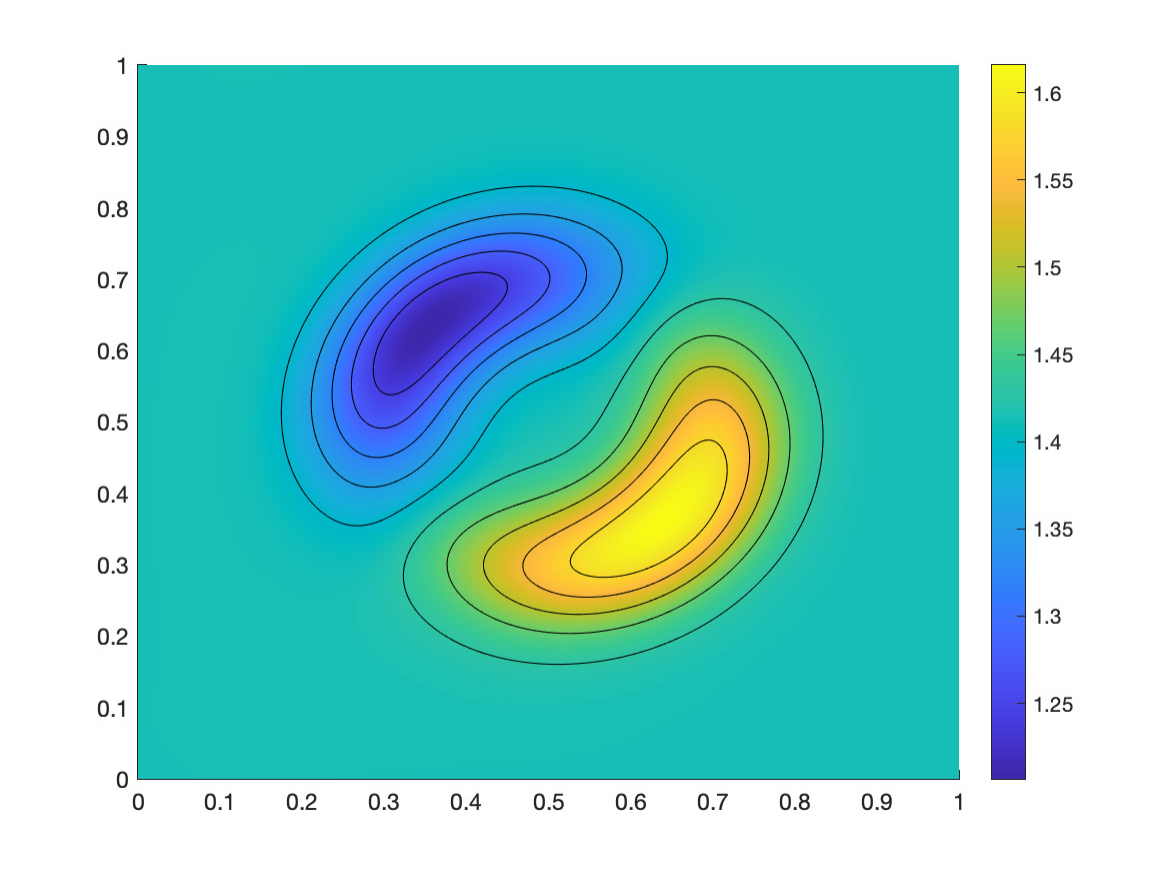} % 替换为你的图片文件名
%        \caption{$v$}
%        \label{fig:c}
    \end{subfigure}
    \caption{Solutions structure of traveling vortex problem at $T=0$ (top), $T=1$ (bottom) on a grid with $512\times512$: $\rho$ (left), $p$ (middle), $|\vu|$ (right).}
    \label{Travel Vortex512}
\end{figure}

\begin{Example}\label{E1}
We consider a two-dimensional Riemann problem for the Euler equations,  consisting of two forward moving shocks and two stationary contact discontinuities. On $\Omega=[0,1]\times[0,1]$ the initial values have the form
\begin{equation}
(\rho,u,v,p)= 
\begin{cases}
(0.5313, 0, 0, 0.4),  & x > 0.5,  y>0.5, \\
(1, 0.7276, 0, 1), &   x < 0.5,  y>0.5,\\
(0.8, 0, 0, 1), & x < 0.5,  y<0.5,\\
(1, 0, 0.7276, 1), & x > 0.5,  y<0.5.
\end{cases}
\end{equation}
\end{Example}
The reference solution was computed on a mesh with $512\times512$ cells. Table~\ref{tab1} presents the error and shows the first-order convergence results for the density, momentum and energy. Figure~\ref{fig1} displays the solution structure of the density at different time with 512×512 grid cells. Solution contains two shocks and two curved contact waves.

% Requires the booktabs if the memoir class is not being used

\begin{table}[htbp]
   \centering
   \resizebox{0.75\textwidth}{!}{
   \begin{tabular}{c c c c c c c c c}  
         \hline
      $1/h$    & $\rho$ & EOC& $\rho u$ & EOC& $\rho v$ & EOC & $E$ & EOC\\
      \hline
      32     &   1.86e-02   & -&1.53e-02&- &1.53e-02 &- &6.11e-02  &-\\
      64      &  1.19e-02& 0.6429&1.19e-02  & 0.6358 &9.86e-03 &0.6359 & 4.00e-02& 0.6113\\
      128       & 6.40e-03  & 0.8999&5.25e-03 &0.9097 & 5.25e-03&0.9097 &2.13e-02&0.9124 \\
      256       & 2.49e-03  & 1.3638 &2.00e-03 & 1.3909 &  2.00e-03&1.3915 &7.93e-03&1.4221 \\
      \hline
   \end{tabular}}
   \caption{Errors in $L^1$-norm and EOC for density, momentum and energy at $t=0.15$ for Example~\ref{E1}.}
   \label{tab1}
\end{table}

\begin{figure}[h]
    \centering
    \begin{subfigure}[b]{0.3\textwidth}
        \centering
        \includegraphics[width=2.5in]{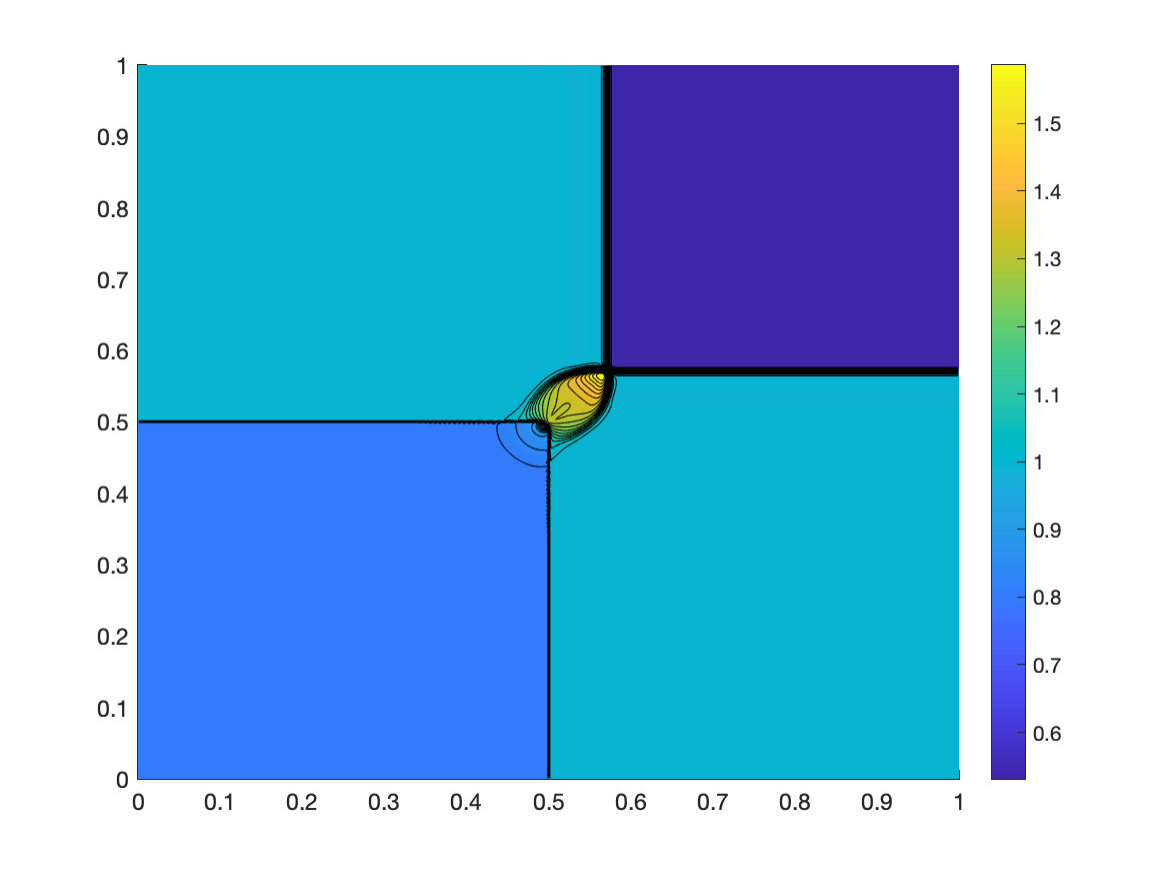} % 替换为你的图片文件名
%        \caption{$\phi$}
%        \label{fig:a}
    \end{subfigure}
    \hfill
    \begin{subfigure}[b]{0.3\textwidth}
        \centering
        \includegraphics[width=2.5in]{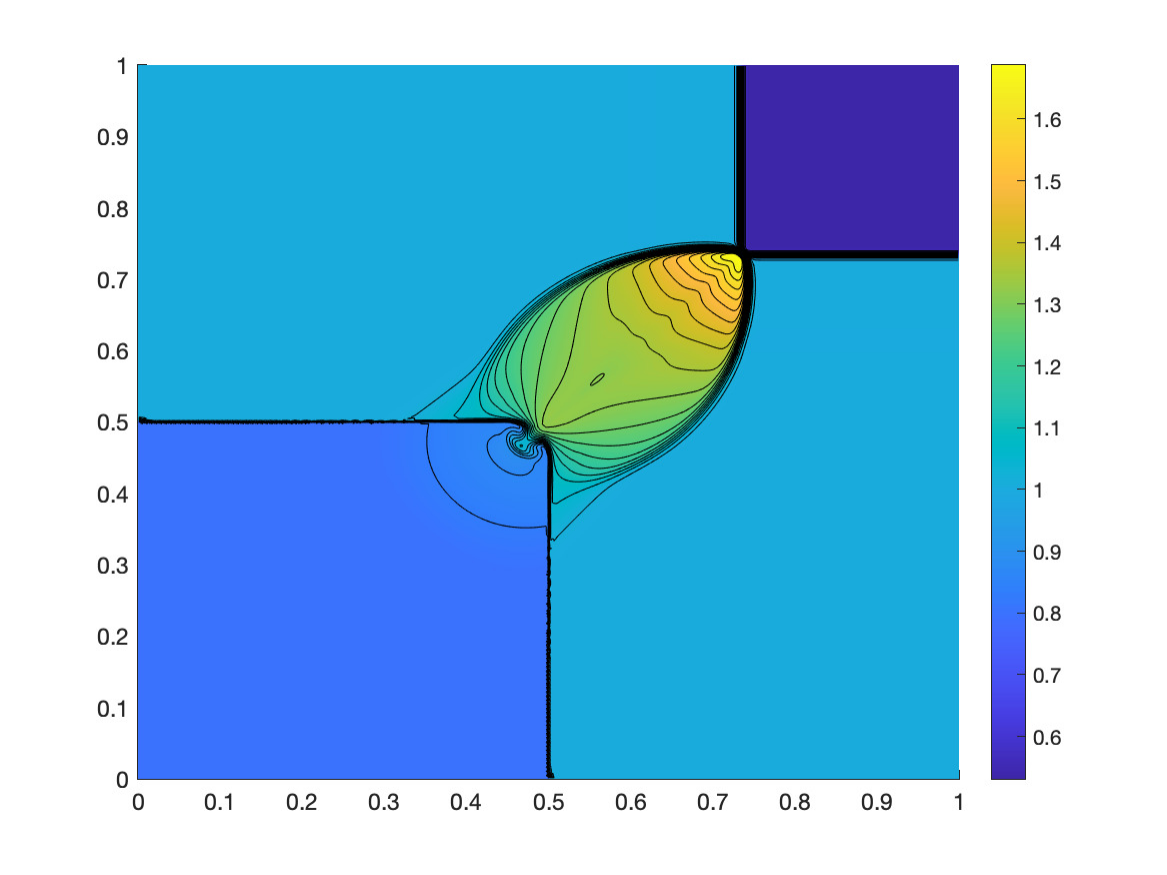} % 替换为你的图片文件名
%        \caption{$u$}
%        \label{fig:b}
    \end{subfigure}
    \hfill
    \begin{subfigure}[b]{0.3\textwidth}
        \centering
        \includegraphics[width=2.5in]{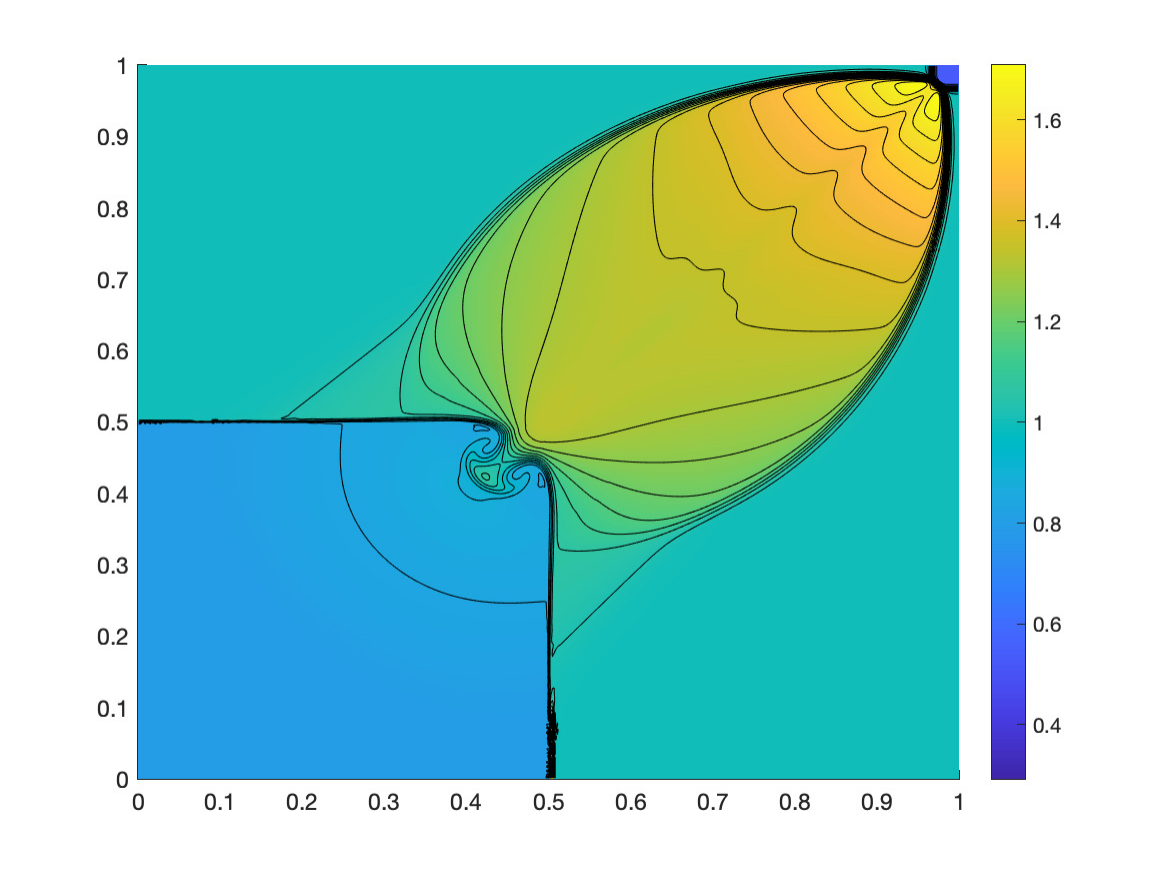} % 替换为你的图片文件名
%        \caption{$v$}
%        \label{fig:c}
    \end{subfigure}
    \caption{Density at different time for the 2D Riemann problem described in Example \ref{E1} using 512$\times$512 grid cells, t=0.045 (left), t=0.15 (middle), t=0.3 (right).}
    \label{fig1}
\end{figure}
%\begin{figure}[h]
%    \centering
%    \begin{subfigure}[b]{0.3\textwidth}
%        \centering
%        \includegraphics[width=2.5in]{RP2D_512_09.eps} % 替换为你的图片文件名
%%        \caption{$\phi$}
%%        \label{fig:a}
%    \end{subfigure}
%    \hfill
%    \begin{subfigure}[b]{0.3\textwidth}
%        \centering
%        \includegraphics[width=2.5in]{RP2D_512_15.eps} % 替换为你的图片文件名
%%        \caption{$u$}
%%        \label{fig:b}
%    \end{subfigure}
%    \hfill
%    \begin{subfigure}[b]{0.3\textwidth}
%        \centering
%        \includegraphics[width=2.5in]{RP2D_512_21.eps} % 替换为你的图片文件名
%%        \caption{$v$}
%%        \label{fig:c}
%    \end{subfigure}
%    \caption{Density at different time for the 2D Riemann problem described in Example \ref{E1} using 512$\times$512 grid cells, t=0.09 (left), t=0.15 (middle), t=0.21 (right).}
%    \label{fig1.1}
%\end{figure}

\begin{Example}\label{E2}
We consider a two-dimensional Riemann problem with the initial data
\begin{equation}
(\rho,u,v,p)= 
\begin{cases}
(0.5, 0.5, -0.5, 5),  & x > 0.5,  y>0.5, \\
(1, 0.5, 0.5, 5), &   x < 0.5,  y>0.5,\\
(2, -0.5, 0.5, 5), & x < 0.5,  y<0.5,\\
(1.5, -0.5, -0.5, 5), & x > 0.5,  y<0.5.
\end{cases}
\end{equation}
\end{Example}

These initial data yield the so-called spiral problem. We compute the error and convergence rate of density, momentum and energy by using the reference solution on a grid with 512×512 cells, the results are listed in Table~\ref{tab2}. Figure~\ref{fig2} presents the contour of the density obtained on different meshes. We can clearly recognize four contact waves forming a spiral singularity.

\begin{table}[htbp]
   \centering
   \resizebox{0.75\textwidth}{!}{
   %\topcaption{Table captions are better up top} % requires the topcapt package
   \begin{tabular}{c c c c c c c c c} % Column formatting, @{} suppresses leading/trailing space
     \hline
     % \multicolumn{2}{c}{Item} \\
     % \cmidrule(r){1-2} % Partial rule. (r) trims the line a little bit on the right; (l) & (lr) also possible
      $1/h$    & $\rho$ & EOC& $\rho u$ & EOC& $\rho v$ & EOC & $E$ & EOC\\
      \hline
       32     &    4.76e-02  & -&4.14e-02&- &4.71e-02 &- &6.77e-02 &-\\
      64      &   3.09e-02 & 0.6253& 2.55e-02&  0.7002&2.85e-02 & 0.7240 &4.50e-02 & 0.5870\\
      128       & 1.79e-02  & 0.7855&1.46e-02 &0.8066 &1.60e-02 & 0.8318 & 2.62e-02 &0.7827\\
      256       & 7.71e-03 &  1.2150&6.31e-03 & 1.2064 &6.75e-03 & 1.2461&1.12e-02  & 1.2251 \\
     \hline
   \end{tabular}}
   \caption{Errors in $L^1$-norm and EOC for density, momentum and energy at $t=0.2$ for Example~\ref{E2}.}
   \label{tab2}
\end{table}

\begin{figure}[h]
    \centering
    \begin{subfigure}[b]{0.3\textwidth}
        \centering
        \includegraphics[width=2.5in]{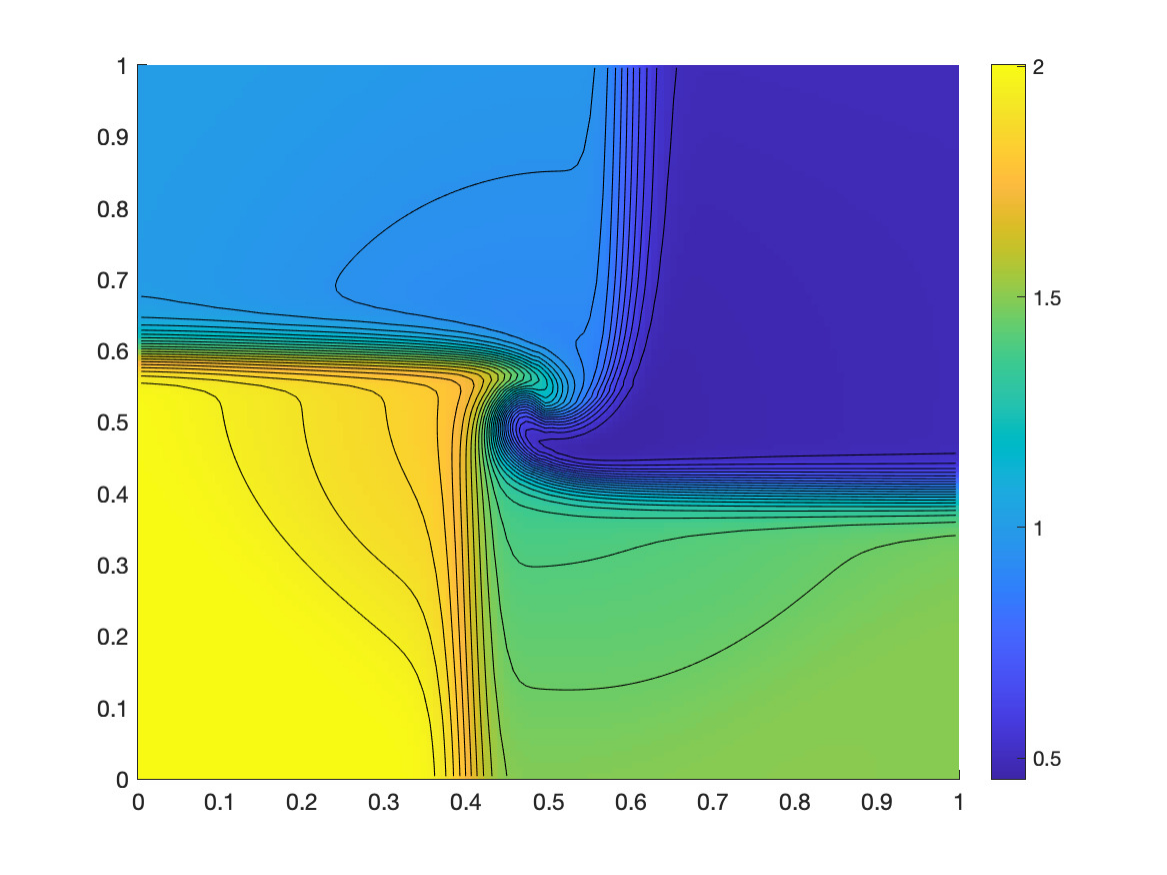} % 替换为你的图片文件名
%        \caption{$\phi$}
%        \label{fig:a}
    \end{subfigure}
    \hfill 
    \begin{subfigure}[b]{0.3\textwidth}
        \centering
        \includegraphics[width=2.5in]{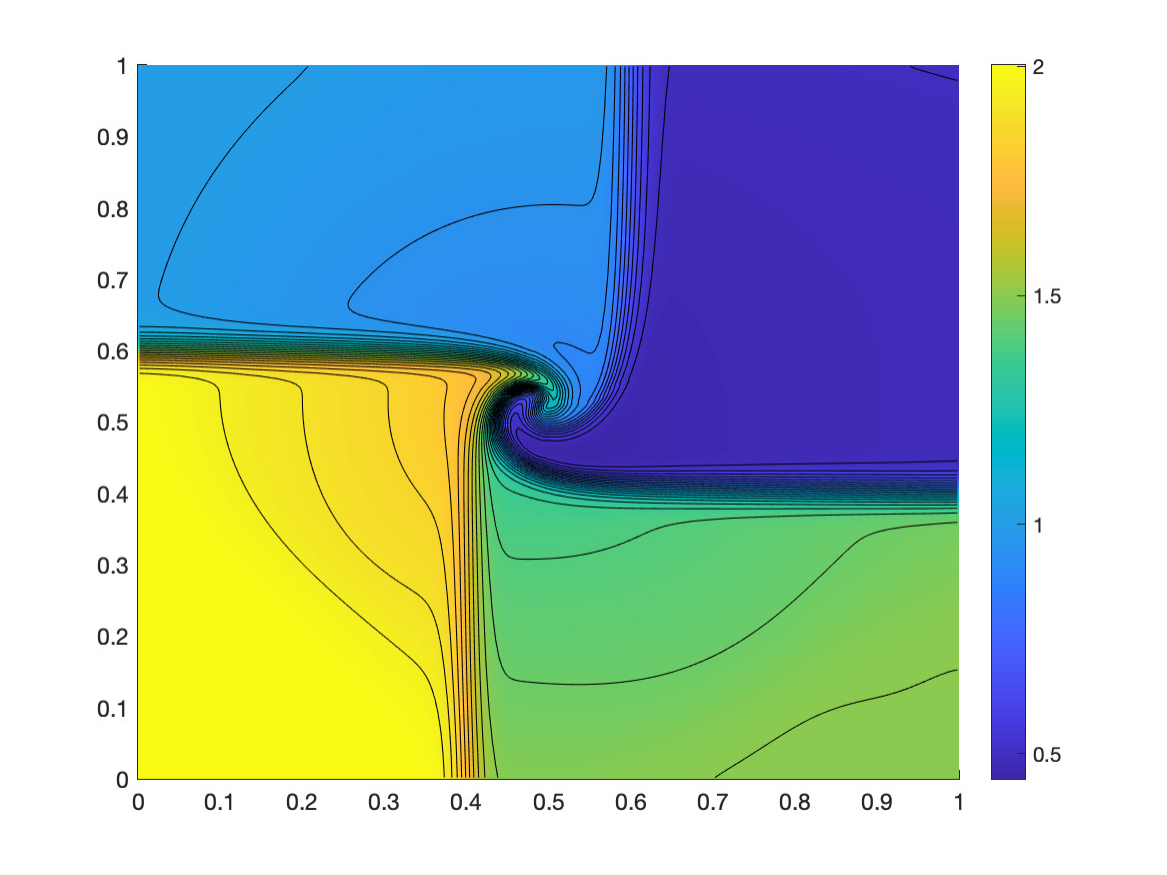} % 替换为你的图片文件名
%        \caption{$u$}
%        \label{fig:b}
    \end{subfigure}
    \hfill
    \begin{subfigure}[b]{0.3\textwidth}
        \centering
        \includegraphics[width=2.5in]{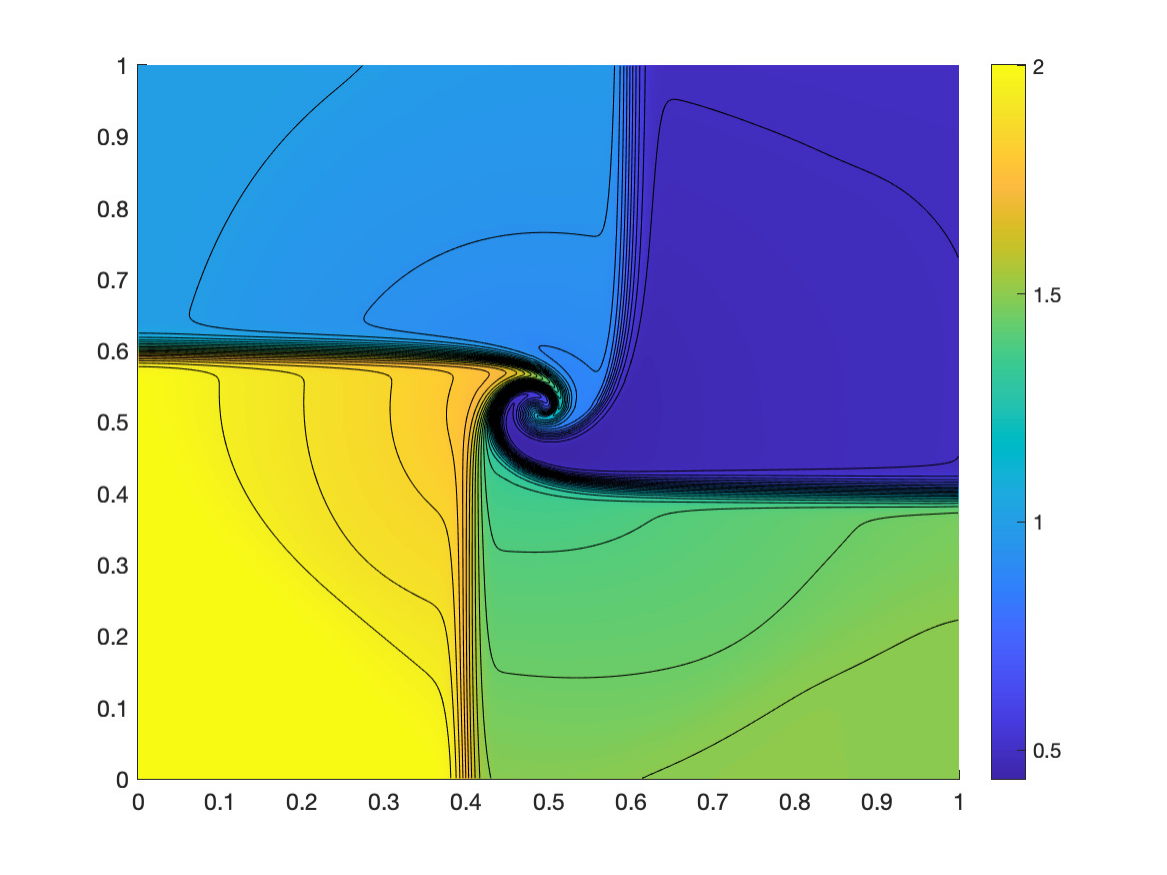} % 替换为你的图片文件名
%        \caption{$v$}
%        \label{fig:c}
    \end{subfigure}
    \caption{Density on different meshes $128\times128$ (left), $256\times256$ (middle), $512\times512$ (right) for the spiral problem described in Example~\ref{E2}.}
    \label{fig2}
\end{figure}

\section*{Acknowledgements} The work of M.L. and Z.T. was supported by the Deutsche Forschungsgemeinschaft (DFG, German Research Foundation) - Project number DFG through the project 525800857 funded within the Focused Programme SPP 2410 ``Hypebrolic Balance Laws: Complexity, Scales and Randomness''. M.L. is grateful to the Gutenberg Research College for supporting her research. The work of  Y.Y. was supported by National Natural Science Foundation of China, No. 12401527. The authors sincerely thank Professor Christiane Helzel and Erick Chudzik for providing the code of the EG2 operator.

\newpage

\appendix
\section{Exact evolution operator for the wave equation system}

Consider a cone with the apex $P=(x,y,t+\Delta t)$ and the base points $Q=Q(\theta)=(x+c\Delta t\cos\theta, y+c\Delta t\sin\theta, t)$ paramertrized by the angle $\theta\in[0,2\pi]$. Denote by $P'=(x,y,t)$ the center of the base of the cone. The lines from $Q(\theta)$ to $P$ generating the mantle of the so-called bicharacteristic cone are called bicharacteristics.

In \cite{LMW:2000} Luk{\'a}{\v c}ov{\'a}-Medvi{\softd}ov{\'a}, Morton and Warnecke have applied the method of bicharacteristics and derived the exact evolution operator for the wave equation system, which reads as:
\begin{eqnarray}\label{2.5}
\phi_P&=&\frac{1}{2\pi}\int^{2\pi}_0[\phi_Q-u_Q\cos\theta-v_Q\sin\theta]\dthe-\frac{1}{2\pi}\int^{t+\Delta t}_{t}\int^{2\pi}_0S({\tilde t},\theta)\dthe\dSt,
\end{eqnarray}

\begin{eqnarray}\label{2.6}
u_P&=&\frac{1}{2\pi}\int^{2\pi}_0[-\phi_Q\cos\theta+u_Q\cos^2\theta+v_Q\sin\theta\cos\theta]\dthe+\frac{1}{2}u_{P'}\\\nonumber
&~&+\frac{1}{2\pi}\int^{t+\Delta t}_{t}\int^{2\pi}_0S({\tilde t},\theta)\cos\theta\dthe\dSt-\frac{1}{2}c\int^{t+\Delta t}_{t}\phi_x(x,y,t)\dt,
\end{eqnarray}

\begin{eqnarray}\label{2.7}
v_P&=&\frac{1}{2\pi}\int^{2\pi}_0[-\phi_Q\sin\theta+u_Q\cos\theta\sin\theta+v_Q\sin^2\theta]\dthe+\frac{1}{2}v_{P^\prime}\\\nonumber
&~&+\frac{1}{2\pi}\int^{t+\Delta t}_{t}\int^{2\pi}_0S({\tilde t},\theta)\sin\theta\dthe\dSt-\frac{1}{2}c\int^{t+\Delta t}_{t}\phi_y(x,y,t)\dt.
\end{eqnarray}

The source term $S$ given as
\begin{equation}\label{A4}
S( \tilde t,\theta)=c[u_x(\tilde x,\tilde y, \tilde t)\sin^2\theta-(u_y(\tilde x,\tilde y,\tilde t)+v_x(\tilde x,\tilde y,\tilde t))\sin\theta\cos\theta+v_y(\tilde x,\tilde y,\tilde t)\cos^2\theta],
\end{equation}
where$(\tilde x,\tilde y)=(x+c(t+\Delta t-\tilde t)\cos\theta,y+c(t+\Delta t-\tilde t)\sin\theta)$ and $\tilde t\in[t,t+\Delta t]$.

 \begin{figure}[!h]
\centering 
\begin{tikzpicture}
\fill (2,0) circle (0.03);
\node at (2,0) [left] {$P^\prime$};
\node at (3.5,3) [above] {$P=(x,y,t+\Delta t)$};
\draw [-,very thick] (2,0) ellipse (2 and 0.5);
\draw [-,very thick] (2.5,0) arc ( 0:45: 1 and 0.25);
\draw [-,very thick] (2,0)--(2,3);
\draw [-,very thick] (0,0)--(2,3);
\draw [-,very thick] (2.5,0.455)--(2,3);
\draw [-,very thick] (2,0)--(4,0);
\draw [-,very thick] (2,0)--(2.5,0.455);
\node at (3.5,0.4) [above] {$Q(\theta)$};
\node at (2.5,0.2) [right] {$\theta$};
\node at (4.2,0)[right]{$P^\prime=(x,y,t)$};

\fill (9,0) circle (0.03);
\node at (9,0) [left] {$P^\prime$};
\node at (13.5,3) [above] {$P=(x,y,t+\Delta t)$};
\draw [-,very thick] (9,0) ellipse (2 and 0.3);
\draw [-,very thick] (9,0)--(12,3);
\draw [-,very thick] (7,0)--(12,3);
\draw [-,very thick] (11,0)--(12,3);
\draw [-,very thick] (9,0)--(11,0);
\draw [-,very thick] (9,0)--(10.5,0.2);
\node at (10.5,0.25) [above] {$Q(\theta)$};
\node at (11.2,0) [right] {$P^\prime=(x-u^\prime\Delta t,y-v^\prime\Delta ,t)$};
\end{tikzpicture}
\caption{Update of point value using evolution operator: wave equation(left), Euler equations(right).}
\end{figure}
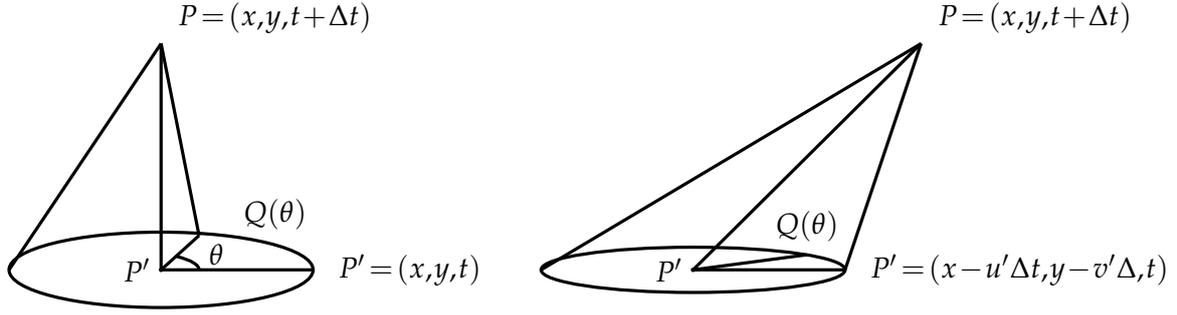

\section{Exact evolution operator for the Euler equations}\label{evo-Euler}
 
The linearized Euler equations with fixed constant states $\rho^{\prime},u^{\prime},v^{\prime},p^{\prime}$ arising in the Jacobian matrixes $A_1, A_2$ are given by 
\begin{equation}
\vU_t+\bm A_1\vU_x+\bm A_2\vU_y=0,
\end{equation}
where $\vU=(\rho,u,v,p)^T$ and  
\[\bm A_1=\begin{pmatrix} 
         u^{\prime} & \rho^{\prime} & 0 &0 \\
         0 & u^{\prime}  & 0 & \frac{1}{\rho^{\prime}}\\
         0 & 0 & u^{\prime} & 0 \\
         0 & \rho^{\prime}c^{{\prime}2} & 0 & u^{\prime}
   \end{pmatrix},
   \quad
   \bm A_2=\begin{pmatrix} 
         v^{\prime} & 0& \rho^{\prime}  &0 \\
         0 & v^{\prime}  & 0 &0 \\
         0 & 0 & v^{\prime} & \frac{1}{\rho^{\prime}}\\
         0 & 0& \rho^{\prime}c^{{\prime}2}& v^{\prime}
   \end{pmatrix}.\]
Consequently, the exact evolution operator for the Euler equations read as:
\begin{eqnarray}\label{}
\rho_P&=&\rho_{P^{\prime}}-\frac{p_{P^{\prime}}}{c^{{\prime}2}}+\frac{1}{2\pi}\int^{2\pi}_{0}\frac{p_Q}{c^{{\prime}2}}-\frac{\rho^{\prime}}{c^{\prime}}u_Q\cos\theta-\frac{\rho^{\prime}}{c^{\prime}}v_Q\sin\theta\dthe\\\nonumber
&~&-\frac{\rho^{\prime}}{c^{\prime}}\frac{1}{2\pi}\int^{t+\Delta t}_{t}\int^{2\pi}_0R(\bm x-({\tilde {\bm u}}-c\bm n(\theta))(t+\Delta t-\tilde t),{\tilde t},\theta)\dthe\dSt,
\end{eqnarray}
\begin{eqnarray}\label{}
u_P&=&\frac{1}{2\pi}\int^{2\pi}_{0}-\frac{p_Q}{\rho^{\prime}c^{\prime}}\cos\theta+u_Q(2\cos^2\theta-\frac{1}{2})+2v_Q\sin\theta\cos\theta\dthe\\\nonumber
&&+\frac{1}{2\pi}\int^{t+\Delta t}_{t}\int^{2\pi}_0R(\bm x-({\tilde {\bm u}}-c\bm n(\theta))(t+\Delta t-\tilde t),{\tilde t},\theta)\cos\theta \dthe\dSt\\\nonumber
&&-\frac{1}{2\tilde \rho}\int^{t+\Delta t}_{t}p_x(P^{\prime}(\tilde t))\dt+\frac{1}{2}u_{p^{\prime}},
\end{eqnarray}
\begin{eqnarray}\label{}
v_P&=&\frac{1}{2\pi}\int^{2\pi}_{0}-\frac{p_Q}{\rho^{\prime}c^{\prime}}\sin\theta+2u_Q\sin\theta\cos\theta+v_Q(2\sin^2\theta-\frac{1}{2}) \dthe\\\nonumber
&&+\frac{1}{2\pi}\int^{t+\Delta t}_{t}\int^{2\pi}_0R(\bm x-({\tilde {\bm u}}-c\bm n(\theta))(t+\Delta t-\tilde t),{\tilde t},\theta)\sin\theta \dthe\dSt\\\nonumber
&&-\frac{1}{2\tilde \rho}\int^{t+\Delta t}_{t}p_y(P^{\prime}(\tilde t))\dt+\frac{1}{2}v_{p^{\prime}},
\end{eqnarray}
\begin{eqnarray}\label{}
p_P&=&-p_{P^{\prime}}-\rho^{\prime}c^{\prime}u_Q\cos\theta-\rho^{\prime}c^{\prime}v_Q\sin\theta\dthe\\\nonumber
&&-\rho^{\prime}c^{\prime}\frac{1}{2\pi}\int^{t+\Delta t}_{t}\int^{2\pi}_0R(\bm x-({\tilde {\bm u}}-c\bm n(\theta))(t+\Delta t-\tilde t),{\tilde t},\theta)\dthe\dSt,
\end{eqnarray}
where
$$(\bm x-({\tilde {\bm u}}-c^\prime\bm n(\theta))(t+\Delta t-\tilde t)=(x-(u^\prime-c^\prime\cos\theta)(t+\Delta t-\tilde t),y-(v^\prime-c^\prime\sin\theta)(t+\Delta t-\tilde t)),$$
and 
$$R(\bm x,t,\theta):=c^\prime[u_x(\bm x,t,\theta)\sin^2\theta-(u_y(\bm x,t,\theta)+v_x(\bm x,t,\theta))\sin\theta\cos\theta+v_y(\bm x,t,\theta)\cos^2\theta].$$

\section{Proof of Lemma \ref{l31}}\label{app-proof}

\begin{proof}
By choosing the integral path in \eqref{qEC-1D} as $\vU^j_{\sigma}(\xi):=\frac{1}{2}(\vU^j_{\sigma}+\vU^{j+1}_{\sigma})+\xi\alpha^j_{\sigma}\vcr^j_{\sigma} = \frac{1}{2}(\vU^j_{\sigma}+\vU^{j+1}_{\sigma})+\xi  \jump {\vU_j} $, we shall rewrite $q^*_j$ as follows 
\begin{align*}
q^*_j&=\int^{\frac{1}{2}}_{-\frac{1}{2}}2\xi\braket{\bm{A} (\vU^j_{\sigma}(\xi))\vcr^j_{\sigma},\vcr^j_{\sigma}}\dxi\br
&=\int^{\frac{1}{2}}_{-\frac{1}{2}}2\xi\braket{\bm{A}(\vU^j_{\sigma}(\xi))\vcr^j(\vU^j_{\sigma}(\xi)),\vcr^j(\vU^j_{\sigma}(\xi))}\dxi\\\nonumber
&+\int^{\frac{1}{2}}_{-\frac{1}{2}}4\xi\braket{\bm{A}(\vU^j_{\sigma}(\xi))\vcr^j(\vU^j_{\sigma}(\xi)),\vcr^j_{\sigma}-\vcr^j(\vU^j_{\sigma}(\xi))}\dxi\\\nonumber
&+\int^{\frac{1}{2}}_{-\frac{1}{2}}2\xi\braket{\bm{A}(\vU^j_{\sigma}(\xi)) (\vcr^j_{\sigma}-\vcr^j(\vU^j_{\sigma}(\xi)),\vcr^j_{\sigma}-\vcr^j(\vU^j_{\sigma}(\xi))}\dxi := \sum_{i=1}^3 I_i.
\end{align*}
In what follows we estimate $q^*_j$, i.e.\ $I_i, i=1,2,3,$ term by term. 

Firstly, we shall control $I_1$ with
 \begin{align}
I_1&=\int^{\frac{1}{2}}_{-\frac{1}{2}}2\xi\braket{\bm{A}(\vU^j_{\sigma}(\xi))\vcr^j(\vU^j_{\sigma}(\xi)),\vcr^j(\vU^j_{\sigma}(\xi))}\dxi =\int^{\frac{1}{2}}_{-\frac{1}{2}}2\xi\lambda^j(\vU^j_{\sigma}(\xi))|\vcr^j(\vU^j_{\sigma}(\xi))|^2 \dxi  \br
&=\int^{\frac{1}{2}}_{-\frac{1}{2}}2\xi\lambda^j(\vU^j_{\sigma}(\xi)) \dxi 
 =\int^{\frac{1}{2}}_{-\frac{1}{2}}\left(\frac{1}{4}-\xi^2\right)\frac{\mathrm{d}}{\mathrm{d}\xi}\lambda^j(\vU^j_{\sigma}(\xi))
 \leqslant\frac{1}{4}[\lambda^j_{\sigma}]^+.
 \end{align}
 
Note that we have used the integration by part in the fourth equality, and obtained the last inequality by analyzing case by case (rarefaction wave and shock wave).
 
Secondly, applying Taylor's expansion:
$$
\lambda^j(\vU^j_{\sigma}(\xi))=\lambda^j_{\sigma}+\xi\frac{\mathrm{d}\lambda^j(\vU^j_{\sigma}(0))}{\mathrm{d}\xi}+\frac{\xi^2}{2}\frac{\mathrm{d}^2\lambda^j(\vU^j_{\sigma}(0))}{\mathrm{d}\xi^2}+(\bar{\lambda}^j_{\sigma}-\lambda^j_{\sigma}),$$
 $$
 \vcr^j(\vU^j_{\sigma}(\xi))=\vcr^j_{\sigma}+\xi\frac{\mathrm{d}\vcr^j(\vU^j_{\sigma}(0))}{\mathrm{d}\xi}+\frac{\xi^2}{2}\frac{\mathrm{d}^2\vcr^j(\vU^j_{\sigma}(0))}{\mathrm{d}\xi^2}+(\bar{\vcr}^j_{\sigma}-\vcr^j_{\sigma}),$$
noticing that 
\[ 
|\frac{\mathrm{d}^s\lambda^j(\vU^j_{\sigma}(\xi))}{\mathrm{d}\xi^s}|=O(|\jump {\vU_j}|)^s, \quad |\frac{\mathrm{d}^s \vcr^j(\vU^j_{\sigma}(\xi))}{\mathrm{d}\xi^s}|=O(|\jump {\vU_j}|)^s,
\]
we shall formulate $I_2$ as follows
 \begin{align}
I_2&=\int^{\frac{1}{2}}_{-\frac{1}{2}}4\xi\braket{\bm{A}(\vU^j_{\sigma}(\xi))\vcr^j(\vU^j_{\sigma}(\xi)),\vcr^j_{\sigma}-\vcr^j(\vU^j_{\sigma}(\xi))}\dxi =\int^{\frac{1}{2}}_{-\frac{1}{2}}4\xi\lambda^j(\vU^j_{\sigma}(\xi))\braket{\vcr^j(\vU^j_{\sigma}(\xi)),r^j_{\sigma}-\vcr^j(\vU^j_{\sigma}(\xi))}\dxi
\br
&=\int^{\frac{1}{2}}_{-\frac{1}{2}}-4\xi \left( \lambda^j_{\sigma}+\xi\frac{\mathrm{d}\lambda^j(\vU^j_{\sigma}(0))}{\mathrm{d}\xi}+\frac{\xi^2}{2}\frac{\mathrm{d}^2\lambda^j(\vU^j_{\sigma}(0))}{\mathrm{d}\xi^2}+(\bar{\lambda}^j_{\sigma}-\lambda^j_{\sigma}) \right)\cdot
\br
 &\quad\quad  \left( \xi\braket{\vcr^j_{\sigma},\frac{\mathrm{d}\vcr^j(\vU^j_{\sigma}(0))}{\mathrm{d}\xi}}+\frac{\xi^2}{2}\braket{\vcr^j_{\sigma},\frac{\mathrm{d}^2\vcr^j(\vU^j_{\sigma}(0))}{\mathrm{d}\xi^2}}+\braket{\vcr^j_{\sigma},(\bar{\vcr}^j_{\sigma}-\vcr^j_{\sigma})}+\xi^2\braket{\frac{\mathrm{d}\vcr^j(\vU^j_{\sigma}(0))}{\mathrm{d}\xi},\frac{\mathrm{d}\vcr^j(\vU^j_{\sigma}(0))}{\mathrm{d}\xi}} \right) \dxi
 \br
 &=\int^{\frac{1}{2}}_{-\frac{1}{2}} \Bigg( -4\xi^2 \lambda^j_{\sigma}\braket{\vcr^j_{\sigma},\frac{\mathrm{d}\vcr^j(\vU^j_{\sigma}(0))}{\mathrm{d}\xi}}-2\xi^3\lambda^j_{\sigma}\braket{\vcr^j_{\sigma},\frac{\mathrm{d}^2\vcr^j(\vU^j_{\sigma}(0))}{\mathrm{d}\xi^2}}-4\xi\lambda^j_{\sigma}\braket{\vcr^j_{\sigma},\bar{\vcr}^j_{\sigma}-\vcr^j_{\sigma})}
 \br
 &\hspace{1.5cm}  -4\xi^3\lambda^j_{\sigma}\braket{\frac{\mathrm{d}\vcr^j(\vU^j_{\sigma}(0))}{\mathrm{d}\xi},\frac{\mathrm{d}\vcr^j(\vU^j_{\sigma}(0))}{\mathrm{d}\xi}}-4\xi^3\frac{\mathrm{d}\vcr^j(\vU^j_{\sigma}(0))}{\mathrm{d}\xi}\braket{\vcr^j_{\sigma},\frac{\mathrm{d}\vcr^j(\vU^j_{\sigma}(0))}{\mathrm{d}\xi}} \Bigg) \dxi
 \br
 &=\int^{\frac{1}{2}}_{-\frac{1}{2}}\left( -4\xi^2 \lambda^j_{\sigma}\braket{\vcr^j_{\sigma},\frac{\mathrm{d}\vcr^j(\vU^j_{\sigma}(0))}{\mathrm{d}\xi}}\right)\dxi 
 = \frac13 \lambda^j_{\sigma} \braket{\vcr^j_{\sigma},\frac{\mathrm{d}\vcr^j(\vU^j_{\sigma}(0))}{\mathrm{d}\xi}} \br
 & \leqslant |\lambda^j_{\sigma}| + \frac1{36} \abs{\lambda^j_{\sigma}} \abs{\braket{\vcr^j_{\sigma},\frac{\mathrm{d}\vcr^j(\vU^j_{\sigma}(0))}{\mathrm{d}\xi}} }^2  
 = \abs{\lambda^j_{\sigma}} + \frac1{36} \abs{\lambda^j_{\sigma} } \abs{\braket{\vcr^j_{\sigma},\frac{\mathrm{d}\vcr^j(\vU^j_{\sigma}(0))}{\mathrm{d}\vU}\jump {\vU_j}  } }^2  
 \br
 &\leqslant |\lambda^j_{\sigma}|+\frac{(c_1{})^2c_2}{36}|\jump {\vU_j} |^2.
 \end{align}
We point out that we always omit the third order terms $O(|\jump {\vU_j}|^3)$ and higher in the above calculations. 
 
Finally, we control $I_3$ with
 \begin{align}
I_3&= \int^{\frac{1}{2}}_{-\frac{1}{2}}2\xi\braket{\bm{A}(\vU^j_{\sigma}(\xi))(\vcr^j_{\sigma}-\vcr^j(\vU^j_{\sigma}(\xi)),\vcr^j_{\sigma}-\vcr^j(\vU^j_{\sigma}(\xi))}\dxi
\leqslant\int^{\frac{1}{2}}_{-\frac{1}{2}}2\xi\rho(\bm{A}(\vU^j_{\sigma}(\xi)))|\vcr^j_{\sigma}-\vcr^j(\vU^j_{\sigma}(\xi))|^2 \dxi 
\br
&\leqslant c_4\int^{\frac{1}{2}}_{-\frac{1}{2}}2 \abs{\xi}|\vcr^j_{\sigma}-\vcr^j(\vU^j_{\sigma}(\xi))|^2\dxi \leqslant c_2c_1^2  |\jump {\vU_j} |^2 \int^{\frac{1}{2}}_{-\frac{1}{2}} 2 \abs{\xi}^3 \dxi = \frac1{16} c_2c_1^2  |\jump {\vU_j} |^2. 
 \end{align}
Combing above three estimates on $I_i,i=1,2,3$, we finish the proof. 
\end{proof}


\begin{thebibliography}{30}


\bibitem{Chiodaroli-DeLellis-Kreml:2015}
E.~Chiodaroli, C.~De~Lellis and O.~Kreml.
\newblock Global ill-posedness of the isentropic system of gas dynamics.
\newblock {\em Commun. Pur. Appl. Math.}, 68 (2015), 1157-1190.

\bibitem{Chiodaroli-Kreml-Macha-Schwarzacher:2021}
E.~Chiodaroli, O.~Kreml, V.~M{\'a}cha and S.~Schwarzacher.
\newblock Non--uniqueness of admissible weak solutions to the compressible
  {Euler} equations with smooth initial data.
\newblock {\em Trans. Amer. Math. Soc.}, 374 (2021), 2269-2295.


\bibitem{DeLellis-Szekelyhidi:2010}
C.~De~Lellis and L.~Sz\'{e}kelyhidi~Jr.
\newblock On admissibility criteria for weak solutions of the {Euler}
  equations.
\newblock {\em Arch. Ration. Mech. Anal.}, 195 (2010), 225-260.

 \bibitem{EHL}
 E. Chudzik, C. Helzel and M.~Luk{\'a}{\v c}ov{\'a}-Medvi{\softd}ov{\'a}.
\newblock {Active Flux methods for hyperbolic systems using the methods of bicharacteristics}.
 \newblock {\em J. Sci. Comput.}, 99 (2024). %Article 16. https://doi.org/10.1007/s10915-024-02462-z


 \bibitem{Feireisl-Klingenberg-Kreml-Markfelder:2020}
E.~Feireisl, C.~Klingenberg, O.~Kreml and S.~Markfelder.
	\newblock {On oscillatory solutions to the complete {Euler} system}.
	\newblock {\em J. Differ. Equ.}, 269 (2020), 1521-1543.
	


\bibitem{FLM}
E.~Feireisl, M.~Luk{\'a}{\v c}ov{\'a}-Medvi{\softd}ov{\'a} and H.~Mizerov{\'a}.
\newblock {A finite volume scheme for the {Euler} system inspired by the two
  velocities approach}.
\newblock {\em Numer. Math.}, 144 (2020), 89-132.


\bibitem{FLMS}
E.~Feireisl, M.~Luk{\'a}{\v c}ov{\'a}-Medvi{\softd}ov{\'a}, {H}. Mizerov{\'a}  and B.~She.
\newblock {\em Numerical analysis of compressible fluid flows}.
\newblock Volume 20 of MS\&A series,  Springer-Verlag, 2021.


\bibitem{Li-Zhang-Yang:1998}
J.~Li, T.~Zhang and S.~Yang.
\newblock {\em The two-dimensional Riemann problem in gas dynamics}, volume~98.
\newblock CRC Press, 1998.

\bibitem{LiDu}
J.~Li and Z.~Du.
\newblock A two-stage fourth order time-accurate discretization for Lax--Wendroff type flow solvers I. Hyperbolic conservation laws
\newblock {\em SIAM J. Sci. Comput}, 218(2016), 19-43.

\bibitem{LMW:2000}
M.~Luk{\'a}{\v c}ov{\'a}-Medvi{\softd}ov{\'a}, K. Morton and G. Warnecke. 
\newblock Evolution Galerkin methods for hyperbolic systems in two space dimensions.
\newblock {\em Math. Comp.}, 69(2000), 1355-1384.

\bibitem{LSW:2002}
M.~Luk{\'a}{\v c}ov{\'a}-Medvi{\softd}ov{\'a}, J. Saibertová  and G. Warnecke. 
\newblock  Finite volume evolution Galerkin methods for nonlinear hyperbolic systems. 
\newblock {\em J. Comput. Phy.}, 183(2002), 533-562.

\bibitem{LWZ:2006}
M.~Luk{\'a}{\v c}ov{\'a}-Medvi{\softd}ov{\'a}, G. Warnecke and Y. Zahaykah.
\newblock On the stability of Evolution Galerkin schemes applied to a two-dimensional wave equation system.
\newblock {\em  SIAM J. Numer. Anal.}, 44(2006), 1556-1583.

\bibitem{LME}
M.~Luk{\'a}{\v c}ov{\'a}-Medvi{\softd}ov{\'a} and E.~Tadmor.
\newblock On the entropy stability of Roe-type finite volume methods.
\newblock {\em Proc. Sympos. Appl. Math.}, 67 (2009), 765-774.

\bibitem{LMY}
M.~Luk{\'a}{\v c}ov{\'a}-Medvi{\softd}ov{\'a} and Y.~Yuan.
\newblock Convergence of first-order finite volume method based on exact
  {Riemann} solver for the complete compressible {Euler} equations.
\newblock{\em Numer. Methods Partial Differential Eq.}, { 39} (2023), 3777-3810.

\bibitem{LMY2023}
M.~Luk{\'a}{\v c}ov{\'a}-Medvi{\softd}ov{\'a} and Y.~Yuan.
\newblock  Convergence of a generalized {Riemann} problem scheme for the {Burgers} equation.  
\newblock{\em Comm. Appl. Math. Comput.}, 6 (2024), 2215-2238.



\bibitem{Tadmor-acta}
E.~Tadmor. 
\newblock Entropy stability theory for difference approximations of nonlinear conservation laws and related time-dependent problems. 
\newblock {\em Acta Numer.}, 12 (2003), 451-512.



\end{thebibliography}
\end{document}